\newcommand{\blue}{\color{blue}}
\newcommand{\black}{\color{black}}
\newcommand{\beq}{\begin{equation}}
\newcommand{\eeq}{\end{equation}}
\newcommand{\e}{\mathrm{e}}
\newtheorem{theorem}{Theorem}[section]
\newtheorem{defi}[theorem]{Definition}
\newtheorem{lemma}[theorem]{Lemma}
\newtheorem{cor}[theorem]{Corollary}
\newtheorem{proposition}[theorem]{Proposition}
\newtheorem{rem}[theorem]{Remark}
\newcommand{\dd}{\,\mathrm{d}}
\newcommand{\ddo}{\mathrm{d}}
\author{Alexander Ostermann}
\address{Department of mathematics, University of Innsbruck,
Technikerstr.~13, 6020 Innsbruck, Austria (A. Ostermann)}
\email{alexander.ostermann@uibk.ac.at}
\author{Fr\'ed\'eric Rousset}
\address{Laboratoire de Math\'ematiques d'Orsay (UMR 8628), Universit\'e Paris-Sud,  91405 Orsay Cedex, France (F. Rousset)}
\email{frederic.rousset@math.u-psud.fr}
\author{Katharina Schratz}
\address{Fakult\"{a}t f\"{u}r Mathematik, Karlsruhe Institute of Technology,
Englerstr.~2, 76131 Karlsruhe, Germany (K. Schratz)}
\email{katharina.schratz@kit.edu}
\keywords{Nonlinear Schr\"odinger equations -- numerical Strichartz estimates -- low regularity -- error analysis}
\begin{document}

\begin{abstract}
We present a new filtered low-regularity Fourier integrator for the cubic nonlinear Schr\"odinger equation based on recent time discretization and filtering techniques. For this new scheme, we perform a rigorous error analysis and establish better convergence rates at low regularity than known for classical schemes in the literature so far. In our error estimates, we combine the better local error properties of the new scheme with a stability analysis based on general discrete Strichartz-type estimates. The latter allow us to handle a much rougher class of solutions as the error analysis can be carried out directly at the level of $L^2$ compared to classical results \black in dimension $d$, \black which are limited to higher-order (sufficiently smooth) Sobolev spaces $H^s$ with $s>d/2$.
%in place of the classical smooth enough Sobolev space $H^s$, $s>d/2$,  as it is usually done,
%and allows to improve the needed regularity for the exact solution.
In particular, we are able to establish a global error estimate in $L^2$ for $H^1$ solutions which is roughly of order $\tau^{ {1\over 2} + { 5-d \over 12} }$ in dimension $d \leq 3$ ($\tau$ denoting the time discretization parameter). This breaks the ``natural order barrier'' of $\tau^{1/2}$ for $H^1$ solutions which holds for classical numerical schemes (even in combination with suitable filter functions).
% due to their classical error structure of type $\tau^{\delta}(-\Delta)^{\delta} u$ introduced by the leading second order differential operator $-\Delta$.
\end{abstract}

\title[Error estimates of a Fourier integrator for the cubic Schr\"odinger equation at low regularity]{Error estimates of a Fourier integrator for the cubic Schr\"odinger equation at low regularity}

\maketitle

\section{Introduction}
We consider the cubic nonlinear Schr\"odinger equation
\begin{equation}\label{nls}
\begin{aligned}
i \partial_t u &= - \Delta u + \vert u \vert^2 u, \qquad (t,x) \in \mathbb{R} \times \mathbb{R}^d
%u(0,x) &= u_0(x)
\end{aligned}
\end{equation}
in dimension $d \leq 3$. This equation, and more generally semi-linear Schr\"odinger equations,
\begin{equation}\label{nlsO}
i\partial_t u = \Delta u + \mu \vert u \vert^{2p}u, \qquad p \in \mathbb{N},\,\, \mu = \pm 1
\end{equation}
are  numerically well studied. To approximate the time evolution of \eqref{nlsO} various (time) discretization techniques have been proposed in the literature based on, e.g.,  splitting the right-hand side into the linear and nonlinear part (splitting schemes) or discretizing Duhamel's formula (exponential integrators), see, e.g.,  \cite{BeDe02,CanG15,CCO08,CoGa12,Duj09,Faou12,GauLu,Gau11,Ignat11,IZ09,Lubich08,Ta12} and the references therein.

For smooth solutions the error behaviour of these classical schemes is nowadays well understood and, based on a rigorous error analysis, global error estimates could be established. In the error estimates the regularity of the solution plays a crucial role and convergence (of a certain rate) only holds for sufficiently smooth solutions. One of the reasons for this regularity requirement is the following.
%
%\red OLD PAPER: Please rewrite this part in red !! ---
%Semilinear Schr\"odinger equations, in particular those of type
%\begin{equation}\label{nlsO}
%i\partial_t u = \Delta u + \mu \vert u \vert^{2p}u, \quad p \in \mathbb{N}
%\end{equation}
% with $\mu \in \mathbb{R}$ are nowadays extensively studied numerically. In this context, splitting methods (where the right-hand side is split into the kinetic and nonlinear part, respectively) as well as exponential integrators contribute particularly attractive classes of integration schemes, see, e.g., \cite{CanG15,CCO08,CoGa12,Duj09,Faou12,GauLu,Gau11,IZ06,Ignat11,IZ09,Lubich08} .
Within the construction of all (classical) numerical methods the stiff part (i.e., the term involving the differential operator $-\Delta$) is approximated in a way that the control of the local error requires the boundedness of additional spatial derivatives of the exact solution. Therefore, convergence of a certain order only holds under sufficient additional regularity assumptions on the solution. The severe order reduction of classical numerical schemes in case of non-smooth solutions is nowadays a well established fact in  numerical analysis, see, e.g., \cite{Eil16,Faou12,JL00,OS18} in case of (non)linear Schr\"odinger equations.

More precisely, classical schemes for \eqref{nlsO} \black with time step size $\tau$ \black introduce a local error that behaves roughly like (cf.~\cite{HochOst10,Lubich08,OS18})
\begin{equation}\label{localClass}
\tau^{1+\gamma} (-\Delta)^\gamma u(t),
\end{equation}
$u$ being the exact solution, such that convergence of order $\tau^\gamma$ in $H^s$ requires solutions in $H^{s+2\gamma}$.

Recently, a Fourier integrator for Schr\"odinger equations was introduced in \cite{OS18}. The main interesting property of the new scheme lies in the fact that the boundedness of only one additional derivative of the exact solution is required thanks to a local error structure of type
\begin{equation}\label{localNew}
\tau^{1+\gamma} \vert \nabla\vert^\gamma u(t)
\end{equation}
such that convergence of order $\tau^\gamma$ in $H^s$ requires solutions only in $H^{s+\gamma}$.

While the new discretization technique presented in \cite{OS18} allowed us to cut down the regularity assumption in the local error (cf.~\eqref{localClass} and \eqref{localNew}, respectively), the \emph{stability analysis in low regularity spaces} remained an open problem. This is due to the fact that the error analysis of low regularity integrators was up to now only based on classical tools. \black For estimating \black the nonlinear terms (in the global error) classical bilinear estimates based on Sobolev embedding are exploited. \black Note that this \black is a common approach in the error analysis of nonlinear dispersive equations, see, e.g., \cite{Eil16,Faou12} in case of semi-linear Schr\"odinger equations and \cite{BS19,BFS17,HS16,OS18} in the context of low regularity integrators. This classical approach easily allows us to prove stability of the numerical scheme at the cost that it requires highly regular solutions. More precisely, the analysis in \cite{OS18} is restricted \black in dimension $d$ \black to higher-order (sufficiently smooth) Sobolev spaces
\begin{align}\label{sob}
H^s \qquad \text{with Sobolev exponent} \quad s >d/2
\end{align}
for which $H^s$ is an algebra.
%and the nonlinear terms can easily be estimated using the classical bilinear estimate
%$$
%\Vert f g \Vert_s \leq c_s \Vert f \Vert_s \Vert g \Vert_s \quad \text{for } s >d/2.
%$$
The latter assumption allows us  to establish the global error estimate
\begin{equation}\label{errBefore}
\Vert u(t_n) - u^n\Vert_s \leq c \tau^{\gamma} \quad \text{for solutions } u \in H^{s+\gamma} \quad \text{for } s >d/2.
\end{equation}
\black Here, $u^n$ denotes the numerical approximation to the exact solution $u(t)$ at time $t=t_n=n\tau$. \black While the condition $s > d/2$ is common in classical error analysis of nonlinear problems, it drastically increases the regularity assumptions on the solution: classical convergence estimates (such as \eqref{errBefore}) are restricted to the class of solutions in $H^{d/2+\varepsilon+\gamma}$ ($\varepsilon>0$) which is particularly limiting in higher dimensions $d \geq 2$.

%This restriction also  holds in the general analysis of classical first-order splitting schemes (Faou \cite{Faou12,Eil16}).
% The error analysis of numerical schemes at low regularity, i.e., in general dimensions $d\geq 2$ poses  a widely open research

While, from a numerical point of view, the analysis of nonlinear problems at low regularity is still (in large parts) widely open, the difficulty in the control of the nonlinear terms in low regularity spaces could be overcome in many cases at a continuous level. For the Schr\"odinger equation \eqref{nlsO} it is, for instance, a well-established fact (see, for example, the books \cite{Cazenave,Linares,Tao06}) that the Cauchy problem for \eqref{nlsO} on $\mathbb{R}^d$ is locally well-posed in $L^2$ for $2p \leq \frac4d$ and in $H^1$ for $2p \leq \frac4{d-2}$. The essential tool  in the well-posedness analysis in low regularity spaces are Strichartz estimates. In case of the free Schr\"odinger flow $S(t) = \e^{i t \Delta}$ on $\mathbb{R}^d$ they take the form
\begin{align}\label{str}
\left \Vert \e^{i t \Delta}u_0 \right\Vert_{L_t^q L_x^r} \leq c_{d,q,r} \Vert u_0\Vert_2 \qquad \text{for}\quad 2 \leq q, r \leq \infty, \quad \frac{2}{q}+\frac{d}{r} = \frac{d}{2}, \quad (q,r,d) \neq (2, \infty,2).
\end{align}
A natural question is what can we gain from them numerically. In particular, as for parabolic evolution equations, the so-called \emph{parabolic smoothing property}
\begin{align}\label{par}
\left \Vert \black (-\Delta)^{\alpha} \e^{ t \Delta} u_0 \black \right \Vert_{L_x^r} \leq c_{d,r} t^{- \alpha} \Vert u_0\Vert_{L_x^r}, \qquad \alpha \geq 0
\end{align}
is highly exploited in numerical analysis (see, e.g., the recent result \cite{para}). The main difficulty from a numerical point of view is that Strichartz estimates \eqref{str} are, in contrast to \eqref{par}, not pointwise in time and their gain lies in integrability and not differentiability. In particular, Strichartz-like estimates do not hold for the time (nor fully) discrete Schr\"odinger group $\{e^{in\tau \Delta}\}_{ n \in \mathbb{N}}$, see, e.g., the important works \cite{Ignat11,IZ09,IZ06,SK05}.

In \cite{Ignat11,IZ09} a new filtered splitting approximation was introduced for the nonlinear Schr\"odinger equation on $\mathbb{R}^d$, based on filtering the high frequencies in the linear part $S_\tau(t) \varphi = S(t) \Pi_{\tau^{- { 1 \over 2 } }} \varphi$ with the filter function
\[
\widehat{\Pi_{\tau^{- {1\over 2}}} \varphi}(\xi) = \hat{\varphi}(\xi) {\bf 1}_{\{ \vert \xi \vert \leq \tau^{-1/2}\}}, \qquad \xi \in \mathbb{R}^d.
\]
These filtered groups $S_\tau(t)$ admit discrete Strichartz-like estimates which are discrete in time and uniform in the time discretization parameter. The latter allows one to show stability of the scheme in the same space \black where the stability of the PDE is established. \black For these filtered schemes (of classical order one) error bounds of order one could be established in $L^2$ for solutions in $H^2$ for semilinear Schr\"odinger equations. \black In the preprint \cite{Choi}, this result was be extended to the semi discrete (time) analysis of the filtered Lie splitting scheme for $H^1$ solutions at the price of reduced order $\tau^\frac{1}{2}$ for time convergence -- the natural order barrier of classical numerical schemes at this level of regularity. \black

Let us also mention the paper \cite{Lubich08}, where the error of the second-order Strang splitting scheme for nonlinear Schr\"odinger and Schr\"odinger--Poisson equations was analysed. \black In this paper, Lubich's sophisticated argument allowed for the first time \black a rigorous second-order convergence bound of Strang splitting for the cubic nonlinear Schr\"odinger (NLS) equation in $L^2$  for exact solutions in $H^4$ (the natural space of regularity for classical second-order methods). The idea is to first prove fractional convergence  of the schemes in a suitable higher-order Sobolev space which implies a priori the boundedness of the numerical scheme in this space. This then allows one to establish error estimates in lower-order Sobolev spaces as  classical bilinear estimates can be applied in the stability argument with the numerical solution measured in a stronger norm. As the scaling of dimension and order of convergence play an important role, the argument does, however, not apply to solutions in
$$
H^s\quad \text{with}\quad s < d/2.
$$
Solutions with this regularity do not leave any room to play in the bootstrap argument.

In \black the present \black work, we introduce a new filtered low-regularity Fourier integrator based on the time discretization technique introduced in \cite{OS18} and inspired by the filtering of high frequencies \cite{Ignat11,IZ09}. The good properties of the new scheme together with a fine error analysis allow us to establish better convergence rates at low regularity than known in the literature so far, in particular, compared to our previous work \cite{OS18} on low-regularity integrators which was restricted to sufficiently smooth Sobolev spaces $H^s$ with $s>d/2$. With the aid of general discrete Strichartz-type estimates, we can overcome this limitation and prove $L^2$ estimates for the new scheme for solutions in $H^1$ in dimensions $d \leq 3$.

This approach in particular allows us to break the ``natural order barrier'' of $\tau^{1/2}$ for $H^1$ \black solutions. Note that the \black latter cannot be overcome by classical numerical schemes (not even by introducing suitable filter functions) due to their classical error structure of type $\tau^{\delta}(-\Delta)^{\delta} u$, introduced by the leading second order differential operator $-\Delta$.

\section{A Fourier integrator for the cubic Schr\"odinger equation at low regularity, the main theorem and \black the \black central idea of the proof}\label{sec:n}

In order to approximate the solution $u(t)$ of \eqref{nls} at time $t= t_{n+1} = t_n+\tau$ we choose the \black one-step method \black
\begin{equation}\label{scheme}
\begin{aligned}
u^{n+1} &=\Phi^\tau_K(u^n):={e}^{i \tau \Delta} \left( u^n - i \tau \Pi_{K} \left(\left(\Pi_K u^n\right)^2 \varphi_1(-2i \tau\Delta) \Pi_K \overline u^n\right)\right),\\
u^0 &= \Pi_{K} u(0)
\end{aligned}
\end{equation}
with $\varphi_1(z) = \frac{{e}^{z} - 1}{z}$ and the projection operator defined by the Fourier multiplier
\beq
\label{PiKdef}
\black \Pi_K =  \chi^2\left({ -i \nabla \over K}\right), \black
\eeq
which in Fourier space reads
$$
\widehat{\Pi_K\phi}(\xi) = \widehat{\phi}(\xi)\, \chi^2\left(\frac{\xi}{K}\right), \quad \xi \in \mathbb{R}^d.
$$
Here $\chi$ is a smooth radial nonnegative function which is one on $B(0,1)$ and supported in $B(0,2)$, and $K\geq 1$ is considered as \black a parameter that will depend on $\tau$. \black Note that, here, we will  not restrict ourselves to the choice $K= { \tau^{- {1 \over 2}}}$ as in \cite{Ignat11}, but we allow $K = { \tau^{- {\alpha \over 2}}}$ with \black some \black $\alpha \geq 1$. The main reason for this choice is that the introduction of the filter introduces a new term in the error. Indeed, by denoting by $u$ the exact solution of \eqref{nls} and by $u^n$ the sequence given by the scheme \eqref{scheme}, we have the estimate
\beq
\label{erreurintro}
\|u(t_{n}) - u^{n}\|_{L^2} \leq  \|u^n - u^K(t_{n})\|_{L^2} +  \|u^K(t_{n}) - u(t_{n})\|_{L^2},
\eeq
where $u^K(t)$ denotes the exact solution of the filtered PDE,
\beq
\label{nlsKintro}
i \partial_{t} u^K = - \Delta  u^K + \Pi_{K}(|\Pi_{K}u^K|^2 \Pi_{K}u^K),\qquad \black u^K(0) = \Pi_K u(0). \black
\eeq
We now observe that the scheme \eqref{scheme} is exactly the low-regularity Fourier integrator introduced in \cite{OS18}, \black applied to the filtered \black PDE \eqref{nlsKintro}. From this observation and due to the more favorable property of the local error of this scheme emphasized in \eqref{localNew}, we could expect an estimate of order $\tau$ for $\|u^n - u^K(t_{n})\|_{L^2}$ assuming only $H^1$ regularity of the exact solution. Nevertheless, for the second term on the right-hand side of \eqref{erreurintro}, i.e., $ \|u^K(t_{n}) - u(t_{n})\|_{L^2}$, we can get only an estimate of order $1/K$ for $H^1$ solutions. Therefore, the choice $K={ \tau^{- {1 \over 2}}}$, which yields uniform in $\tau$ discrete Strichartz-type estimates as proven in \cite{Ignat11}, would give a total error estimate of order $\tau^{1 \over 2}$, completely \black hiding the superior \black properties of the local error of the Fourier integrator. This is the reason for which we make the choice $K= { \tau^{- {\alpha \over 2}}}$ and choose $\alpha$ in the end in order to optimize the error. Taking $\alpha$ large makes the term $\|u^K(t_{n}) - u(t_{n})\|_{L^2}$ smaller, but the price to pay for such a choice is that there is a loss in the discrete Strichartz estimates. Indeed, we shall establish in Theorem \ref{theoDSE} below that the general form of the discrete Strichartz estimates reads
$$
\left\| e^{in \tau \Delta} \Pi_{K} f\right\|_{l^p_{\tau}L^q} \leq C  ( K \tau^{1\over 2 })^{2\over p }  \|f \|_{L^2}.
$$
\black (For the precise meaning of the norms, we refer to Section~\ref{sect:notations}.)\black We will also establish that this estimate with loss can be used to deduce an estimate with a uniform constant but with a loss of derivatives:
$$
\left\| e^{in \tau \Delta} \Pi_{K} f\right\|_{l^p_{\tau}L^q} \leq C  \|f \|_{H^{{2 \over p}( 1 - {1 \over \alpha})}}.
$$
Note that \black this type of loss of derivative \black in the Strichartz estimates also occurs in the case of compact manifolds \cite{Bour93a,Burq-Gerard-Tzvetkov}.

Choosing $\alpha$ larger than one will thus deteriorate the estimate that we get for the first term $\|u^n - u^K(t_{n})\|_{L^2}$.
In the end, by a careful choice of $\alpha$ such that the two terms contribute equally, we are able to get an estimate on the global error of the form
$$
\Vert u(t_n) - u^n\Vert_{L^2} \leq c \tau^{1/2+\gamma(d)} \quad \text{for solutions } u \in H^{1},
$$
where $\gamma(d)>0$ depends on the dimension $d$. Recall that such a favorable  error estimate  cannot  hold for classical numerical schemes (not even by introducing a suitable filter as done in the splitting schemes \cite{Ignat11,IZ09}) as for $H^1$ solutions the \black global error is proportional to \black $\tau^{1/2}$, in general, due to the local error structure \eqref{localClass}.

We conclude this section with the main theorem on the precise error estimates for our new scheme.
\begin{theorem}
\label{maintheo}
For every $T>0$ and $u_{0} \in H^1$, let us denote by $u\in \mathcal{C}([0,T], H^1)$ the exact solution of \eqref{nls} with initial datum $u_{0}$ and by $u^n$ the sequence defined by the scheme \eqref{scheme}. Then, there exist $\tau_{0}>0$ and $C_{T}>0$ such that for every step size $\tau \in (0, \tau_{0}]$, we have the following error estimates:
\begin{itemize}
\item if $d=1$, with the choice $K= 1/\tau^{5\over 6}$,
$$ \|u^n- u(t_{n})\|_{L^2} \leq C_{T} \tau^{5\over 6}, \quad 0 \leq n \leq N,$$
\item if $d=2$, with the choice $K= 1 /\tau^{3\over 4},$
$$ \|u^n- u(t_{n})\|_{L^2} \leq C_{T} \tau^{3\over 4}, \quad 0 \leq n \leq N,$$
\item if $d=3$, with the choice $K= 1 /\tau^{2\over 3},$
$$ \|u^n- u(t_{n})\|_{L^2} \leq C_{T} \tau^{2\over 3} \left|\log \tau\right|^{2 \over 3}, \quad 0 \leq n \leq N,$$
\end{itemize}
where $N$ is such that  $N\tau \leq T$.
\end{theorem}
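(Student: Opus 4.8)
The plan is to build the whole argument on the splitting \eqref{erreurintro}, reducing the theorem to two independent estimates: the \emph{filtering error} $\|u^K(t_n)-u(t_n)\|_{L^2}$ between the exact flows of \eqref{nls} and of the truncated equation \eqref{nlsKintro}, and the \emph{numerical error} $\|u^n-u^K(t_n)\|_{L^2}$ of the scheme \eqref{scheme} applied to \eqref{nlsKintro}; the parameter $\alpha$ in $K=\tau^{-\alpha/2}$ is then chosen at the very end to balance the two. A preliminary step, used everywhere, is the uniform a priori bound $\sup_{[0,T]}(\|u\|_{H^1}+\|u^K\|_{H^1})\le C$, which follows from the $H^1$ well-posedness of \eqref{nls} and \eqref{nlsKintro} (mass and energy conservation), together with the fact that $\Pi_K$ is bounded on every $H^s$ uniformly in $K$. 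This is what will keep all constants below independent of $K$.

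For the filtering error I would subtract the Duhamel formulations of \eqref{nls} and \eqref{nlsKintro}. The difference $w=u-u^K$ is driven by the data mismatch $(1-\Pi_K)u_0$, by the truncation defect coming from replacing $|u|^2u$ by $\Pi_K(|\Pi_K u^K|^2\Pi_K u^K)$, and by the multilinear difference of the two cubic terms. Since $u_0\in H^1$, the multiplier $1-\chi^2(\cdot/K)$ gives $\|(1-\Pi_K)u_0\|_{L^2}\le C K^{-1}\|u_0\|_{H^1}$, and, controlling the nonlinear contributions in dual space-time norms by means of the continuous Strichartz estimates \eqref{str} and the uniform $H^1$ bound, a Gronwall argument closes and yields $\|u^K(t_n)-u(t_n)\|_{L^2}\le C K^{-1}=C\tau^{\alpha/2}$ on $[0,T]$. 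This contribution improves as $\alpha$ grows.

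The numerical error is the core of the proof and is where the discrete Strichartz estimates enter. Since \eqref{scheme} is precisely the integrator of \cite{OS18} applied to \eqref{nlsKintro}, its one-step error inherits the favourable structure \eqref{localNew}: only one spatial derivative of $u^K$ is needed, so, using the uniform $H^1$ bound, the local error is $O(\tau^2)$ and the propagated local errors accumulate to $O(\tau)$ in $L^2$. The difficulty is stability: writing the recursion for $e^n=u^n-u^K(t_n)$ and unrolling it by discrete Duhamel, the nonlinear contribution is a sum of propagated differences of the cubic terms, which cannot be closed in $L^2$ pointwise in time because the cubic difference is controlled only after a H\"older splitting into space-time Lebesgue norms. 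Here I would invoke Theorem \ref{theoDSE} in both of its forms: the estimate with loss, costing a factor $(K\tau^{1/2})^{2/p}$, and the derivative-loss version with uniform constant, costing $(2/p)(1-1/\alpha)\le 1$ derivatives, which the $H^1$ regularity of the solution can afford. Using the admissible pair adapted to the cubic nonlinearity in each dimension, namely $(p,q)=(8,4)$ for $d=1$, $(4,4)$ for $d=2$ and the endpoint $(2,6)$ for $d=3$, together with an a priori space-time bound on $\|u^n\|_{l^p_\tau L^q}$ obtained by a bootstrap, a discrete Gronwall inequality in the mixed norm $l^\infty_\tau L^2\cap l^p_\tau L^q$ produces a numerical error of size $\tau\,(K\tau^{1/2})^{s}$ with a dimension-dependent loss exponent $s=(1+d)/(5-d)$, i.e. $\tau^{1-(\alpha-1)s/2}$. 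This contribution deteriorates as $\alpha$ grows.

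It then remains to optimise $\alpha$: equating the filtering rate $\tau^{\alpha/2}$ with the numerical rate $\tau^{1-(\alpha-1)s/2}$ gives $\alpha=(11-d)/6$, hence $K=\tau^{-(11-d)/12}$ and the common rate $\tau^{1/2+(5-d)/12}$, reproducing $5/6$, $3/4$, $2/3$ in dimensions $d=1,2,3$. I expect the genuine obstacle to lie entirely in the stability step: closing the discrete Gronwall estimate so that the loss is exactly of the claimed order $(K\tau^{1/2})^{s}$ and no worse requires a careful interplay between the two forms of Theorem \ref{theoDSE} and the bootstrap for $\|u^n\|_{l^p_\tau L^q}$, and it is far from automatic that the accumulated loss does not exponentiate. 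The case $d=3$ is critical because the adapted pair forces the time exponent to the endpoint $p=2$, where the discrete Strichartz estimate is expected to hold only up to a logarithmic factor; this is precisely the origin of the extra $|\log\tau|^{2/3}$ in the three-dimensional bound.
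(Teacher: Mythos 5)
Your top-level skeleton agrees with the paper: the splitting \eqref{erreurintro}, the filtering bound $\|u-u^K\|_{L^\infty_T L^2}\lesssim 1/K$, discrete Strichartz estimates with loss $(K\tau^{1/2})^{2/p}$, and the final balancing of the two error contributions (your $\alpha=(11-d)/6$ and the resulting rates are exactly the paper's choices). However, the core of your treatment of the numerical error $\|u^n-u^K(t_n)\|_{L^2}$ rests on a claim that is false and would make the argument collapse. The local error of the scheme is \emph{not} $O(\tau^2)$ under $H^1$ regularity: its precise structure (Corollary \ref{cor:locE}) is $\tau^2\,|\nabla u^K|^2u^K$, and $|\nabla u^K|^2u^K$ does not lie in $L^2$ for $u^K\in H^1$ --- even in dimension $1$, where $\nabla u^K\in L^2$ only yields an $L^1$ bound. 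This is precisely why the paper proves discrete Strichartz estimates \emph{for the exact solution} $u^K$, namely $\|\nabla e^{is\Delta}u^K(t_k)\|_{l^p_{\tau,N}L^q}\le C_T(K\tau^{1/2})^{2/p}$ (Proposition \ref{uKgrid}). As a consequence, the dimension-dependent loss $(K\tau^{1/2})^{s}$ that you attribute entirely to the stability step actually sits in the \emph{consistency} term: Lemma \ref{LemmaF2n} gives $\|\mathcal{F}_2^n\|_{l^\infty_{\tau,N}L^2}\lesssim \tau (K\tau^{1/2})^{d/2}$ for $d\le 2$ and $\tau(K\tau^{1/2})^{2}(\log K)^{2/3}$ for $d=3$; your formula $s=(1+d)/(5-d)$ reproduces these exponents numerically, but by the wrong mechanism.

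This misattribution is not cosmetic, because the stability iteration could never be closed ``with loss exactly $(K\tau^{1/2})^{s}$'' as you propose --- you yourself note the danger that the loss exponentiates, and it would. The paper's resolution is that stability is closed with \emph{no} loss: the terms linear in $e^n$ are bounded by $\|e^n\|_{l^\infty_\tau L^2}$ times $\|u^K(t_k)\|^2_{l^2_{\tau,N}L^\infty}$, and the decisive ingredient is the uniform, loss-free bound $\|u^K(t_k)\|_{l^2_{\tau,N}L^\infty}\le C_T$ of Proposition \ref{propuKinfty}, which hinges on property {\bf (H)} and is the reason for the restriction $\alpha<2$ in dimension $3$ (satisfied by $\alpha=4/3$); this bound cannot be obtained from $u^K\in\mathcal{C}([0,T],H^1)$ alone in $d\ge 2$, and nothing in your proposal supplies it. Finally, your explanation of the logarithm in $d=3$ is incorrect: the paper \emph{excludes} the endpoint $p=2$ from Theorem \ref{theoDSE} altogether, so no endpoint discrete Strichartz estimate (with or without a log) is available. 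The factor $|\log\tau|^{2/3}$ instead originates from the failure of the critical Sobolev embedding $W^{1,3}(\mathbb{R}^3)\subset L^\infty$, which is salvaged for frequency-localized functions at the price of $(\log K)^{2/3}$ (Lemma \ref{lemsobfin}); this is needed because $(4,4)$ is not an admissible pair in $d=3$, forcing the additional Bernstein-type step $\|\cdot\|_{L^4}\lesssim K^{1/4}\|\cdot\|_{L^3}$ in the three-dimensional bootstrap.
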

In the above theorem we focused on $H^1$ solutions and optimized the rate of convergence. At the price of allowing a lower rate of convergence, we could handle even rougher data.
Note that we have analyzed only the defocusing equation \eqref{nls}. Nevertheless, the same results are true for the focusing one as long as the exact solution remains in $H^1$ (we recall that finite time blow-up in $H^1$ will occur in dimensions  $d=2, \,3$).

The \black rest of the \black paper is organized as follows.

In Section \ref{sectionstrichartz}, we describe the discrete Strichartz estimates, the proofs are postponed to Section~\ref{sectionproofstrichartz}. The aim of Section \ref{sectioncontinue} is to analyze the error $\|u^K(t_{n}) - u^n\|_{L^2}$. A crucial step towards the proof of Theorem \ref{maintheo} is performed in Section \ref{sectioncontdis}. Indeed, we prove that the exact solution $u^K$ of \eqref{nlsKintro} enjoys discrete Strichartz estimates, see Proposition \ref{propuKinfty}, that involve some loss of derivative or loss that is still better \black than that resulting from straightforward \black Sobolev embedding. These discrete Strichartz estimates for $u^K$ are needed for two reasons. At first, the structure of the local error described by \eqref{localNew} is a bit sketchy. A more precise description is given by (cf.~Corollary \ref{cor:locE})
$$
\tau^2 |\nabla u^K|^2 u^K(t)
$$
so that in order to control the local error in $L^2$ we need at least to control $\|\nabla u^K(t)\|_{L^4}$. Therefore, we need to rely on these discrete Strichartz estimates satisfied by the exact solution $u^K$ of the filtered PDE \eqref{nlsKintro} in order to estimate this part of the local error without using more regularity. The other part, where  the estimates of Proposition \ref{propuKinfty} are crucially used, is in the proof of the stability of the scheme at low regularity. Indeed, by defining $e^n= u^n- u^K(t_{N})$, we get that $e^n$ solves
$$
e^{n+1} =  e^{i \tau \Delta} \left( e^n - i \tau \Pi_{K} \left(  \varphi_{1}(-2i\tau \Delta) \Pi_{K} e^n  (\Pi_{K} u^K(t_{n})^2\right) \right) + \cdots
$$
where the dots stand for similar or quadratic and cubic terms with respect to $e^n$. Therefore, we get an $L^2$ estimate of the form
$$
\|e^{n+1} \|_{L^2} \leq \|e^n\|_{L^2} (1 + \tau \|u^K(t_{n})\|_{L^\infty}^2 + \cdots).
$$
In order to prove even boundedness of $e^n$, we need to prove that the expression
$$
\tau  \sum_{n=0}^N \|u^K(t_{n})\|_{L^\infty}^2
$$
is uniformly bounded with respect to $\tau$. This type of estimate will be a consequence of Proposition~\ref{propuKinfty}. Note that this uniform boundedness in dimension $d \geq 2$ cannot be obtained by using only the fact that $u^K \in \mathcal{C}([0, T], H^1)$.

In Sections \ref{sectionlocal1} and \ref{sectionlocal2}, we analyze the local error and finally, in Section \ref{sectionfinale}, we prove Theorem \ref{maintheo}.

\section{Notations}\label{sect:notations}

Note that the mild solution $u(t) = u(t,\cdot)$ of \eqref{nls} is given by
\begin{align}\label{duh}
u(t_n+\tau) = {e}^{i \tau \Delta} u(t_n) - i {e}^{i \tau \Delta} T(u)(\tau,t_n)
\end{align}
with the Duhamel operator
\begin{align}
T(u)(\tau,t_n) = \int_0^\tau {e}^{-i s \Delta} \vert u(t_n+s)\vert^2 u(t_n+s)\dd s.
\end{align}

\black Let $F$ be a function of two variables $(t,x)\in \mathbb{R}\times \mathbb{R}^d$. \black  We use the continuous norms
\begin{align*}
 &   \|F \|_{L^pL^q} = \left( \int_{\mathbb{R}} \|F(t, \cdot) \|_{L^q }^p \, \dd t \right)^{1 \over p}, \\
 &  \|F \|_{L^p_{T}L^q} = \left( \int_{0}^T \|F(t, \cdot) \|_{L^q }^p \, \dd t \right)^{1 \over p}
\end{align*}
with the convention that for $p= \infty$ the integral is replaced by \black the ess\,sup. \black

At the discrete level, for a sequence $(F_{k}(x))_{k \in \mathbb{Z}}$, we use the notation
$$
\| F\|_{l^p_{\tau}L^q} =   \| F_{k}\|_{l^p_{\tau}L^q}= \left( \tau  \sum_{k \in \mathbb{Z} } \|F_{k}\|_{L^q}^p \right)^{1\over p}
$$
and
$$
\| F_{k}\|_{l^p_{\tau, N}L^q} = \left( \tau  \sum_{k =0 }^N \|F_{k}\|_{L^q}^p \right)^{1\over p}.
$$
\black For $p=\infty$, $\tau$ times the sum is replaced by the supremum.

Finally, we write $a\lesssim b$ whenever there is a generic constant $C > 0$ such that $a \le C b$.
\black

\section{Continuous and discrete Strichartz estimates}
\label{sectionstrichartz}

Let us first recall the classical Strichartz estimates for the linear Schr\"{o}dinger equation.

Let us say that $(p,q)$ is admissible if $p \geq 2$, $q \geq 2$, $(p,q,d)\neq (2,\infty, 2)$ and ${2 \over p} +{ d \over q } = {d \over 2}.$  The admissible pair with $p=2$ is called the endpoint. Note that there is no such point in dimensions $1$ \black and $2$. \black
%and that in dimension $2$, we have added the condition that  it is not allowed.
\black As usually, the dual indices of $(p,q)$ will be denoted by $(p',q')$, i.e., $\frac1p+\frac1{p'} = 1$ and $\frac1q+\frac1{q'} = 1$.\black

\begin{theorem}
For every $(p,q)$, admissible, there exists $C>0$ such that for every $f\in L^2$ and $F \in L^{p'} L^{q'}$
\begin{eqnarray}
\label{Tc}& &  \|e^{it \Delta} f \|_{L^p{L^q}} \leq C \| f\|_{L^2} \\
\label{T*c} & & \left \| \int_{\mathbb{R}} e^{-is \Delta } F(s,\cdot)\, ds\right\|_{L^2} \leq C \|F \|_{L^{p'} L^{q'}}.
\end{eqnarray}
Moreover, for every $(p_{1}, q_{1})$ and  $(p_{2}, q_{2})$ admissible, there exists $C>0$ such that for every $F \in L^{p_{2}'} L^{q'_{2}}$, we have
\begin{equation}
\label{TT*c} \left\| \int_{-\infty}^t e^{i ( t-s ) \Delta } F(s,\cdot) \, ds \right\|_{L^{p_{1}} L^{q_{1}}} \leq  C \|F \|_{L^{p_{2}'} L^{q'_{2}}}.
\end{equation}
\end{theorem}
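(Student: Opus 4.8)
The plan is to derive all three estimates from two elementary facts about the free group: the unitarity $\|e^{it\Delta}f\|_{L^2}=\|f\|_{L^2}$ and the dispersive decay $\|e^{it\Delta}f\|_{L^\infty}\lesssim |t|^{-d/2}\|f\|_{L^1}$, the latter coming from the explicit convolution kernel $e^{it\Delta}f=(4\pi i t)^{-d/2}\int_{\mathbb R^d}e^{i|x-y|^2/(4t)}f(y)\,dy$. Interpolating between these two (Riesz--Thorin) gives the family of pointwise-in-time dispersive bounds $\|e^{it\Delta}f\|_{L^q}\lesssim |t|^{-d(\frac12-\frac1q)}\|f\|_{L^{q'}}$ for $2\le q\le\infty$, which is the only quantitative input beyond soft functional analysis.

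Next I would reduce the problem by the $TT^*$ method. Writing $Tf=e^{it\Delta}f$, estimate \eqref{Tc} asserts $T\colon L^2\to L^pL^q$, while \eqref{T*c} asserts that its formal adjoint $T^*F=\int_{\mathbb R}e^{-is\Delta}F(s,\cdot)\,ds$ maps $L^{p'}L^{q'}\to L^2$; these two statements are dual, hence equivalent, and each is in turn equivalent to the boundedness of $TT^*F=\int_{\mathbb R}e^{i(t-s)\Delta}F(s,\cdot)\,ds$ from $L^{p'}L^{q'}$ to $L^pL^q$. Thus it suffices to estimate the single operator $TT^*$, which already yields \eqref{TT*c} in the diagonal case $(p_1,q_1)=(p_2,q_2)$ and without the retardation.

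To bound $TT^*$ away from the endpoint, i.e.\ for $p>2$, I would pair $TT^*F$ with a test function $G\in L^{p'}L^{q'}$, apply the dispersive bound to the inner product $\langle e^{i(t-s)\Delta}F(s),G(t)\rangle$, and arrive at a double time integral with convolution kernel $|t-s|^{-d(\frac12-\frac1q)}$. The admissibility relation $\frac2p+\frac dq=\frac d2$ makes this exponent exactly equal to $2/p$, which is strictly less than $1$ precisely when $p>2$, so the Hardy--Littlewood--Sobolev inequality in the time variable closes the estimate. The hard part is the endpoint $p=2$, relevant only for $d\ge3$, where the kernel exponent equals $1$ and Hardy--Littlewood--Sobolev fails; here one must invoke the Keel--Tao argument, decomposing $|t-s|\sim 2^j$ dyadically, proving off-diagonal bounds on each piece for a range of exponents around $(p,q)$, interpolating bilinearly in the real-interpolation sense, and summing the dyadic contributions.

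Finally, for the retarded estimate \eqref{TT*c} with two possibly distinct admissible pairs, I would first establish the unretarded mixed-norm bound $\|\int_{\mathbb R}e^{i(t-s)\Delta}F(s)\,ds\|_{L^{p_1}L^{q_1}}\lesssim\|F\|_{L^{p_2'}L^{q_2'}}$ by simply composing $T\colon L^2\to L^{p_1}L^{q_1}$ with $T^*\colon L^{p_2'}L^{q_2'}\to L^2$, and then reinstate the sharp causal cutoff $s<t$ by the Christ--Kiselev lemma. The latter applies because the time exponents satisfy $p_2'\le 2\le p_1$, with strict inequality $p_2'<p_1$ off the double endpoint; at the double endpoint $p_1=p_2=2$ the retarded bound must instead be folded into the Keel--Tao analysis directly.
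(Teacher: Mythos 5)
Your outline is correct, and it is the standard proof of the Strichartz estimates. Note, however, that the paper does not prove this theorem at all: it is quoted as a classical result, with the non-endpoint cases credited to Strichartz and Ginibre--Velo and the endpoint $p=2$, $d\geq 3$ to Keel--Tao, so your sketch is essentially a reconstruction of the argument contained in those references. What is worth observing is that your chain of ideas --- pointwise dispersive decay from the explicit kernel, $\mathcal{T}\mathcal{T}^*$ reduction, Hardy--Littlewood--Sobolev in the time variable (where admissibility turns the kernel exponent into $2/p<1$, forcing $p>2$), and the Christ--Kiselev lemma to reinstate the causal cutoff for distinct admissible pairs --- is precisely the strategy the paper itself deploys in Section~\ref{sectionproofstrichartz} to prove its discrete analogue, Theorem~\ref{theoDSE}: there the dispersive bound is Lemma~\ref{lemdisp} for the filtered group $S_K$, a scaling argument reduces to $\tau=1$, the $\mathcal T\mathcal T^*$ identity and a discrete Hardy--Littlewood--Sobolev inequality give the untruncated estimates, and the retarded version \eqref{TT*} follows from a discrete Christ--Kiselev lemma, with the endpoint $p=2$ simply excluded from the statement. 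The only place where your continuous proof needs genuinely more than this template is the endpoint, where Keel--Tao's dyadic decomposition and bilinear real interpolation are unavoidable; you flag this correctly, including the fact that at the double endpoint $p_1=p_2=2$ the retarded bound cannot be obtained from Christ--Kiselev (which requires $p_2'<p_1$) and must be extracted from the Keel--Tao analysis directly.
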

These estimates were proven by Strichartz \cite{Strichartz} in a special case and by Ginibre and Velo \cite{Ginibre-Velo}. The endpoint $p=2$ for $d \geq 3$ was proven by Keel and Tao \cite{Keel-Tao}.

We shall  next study discrete versions of these inequalities for the group
\begin{equation}\label{SK}
S_{K} (t) = e^{i t \Delta } \Pi_{K} = \black \Pi_{K} e^{i t \Delta }. \black
\end{equation}
We will consider that $K \geq \tau^{-{1 \over 2}}$. In the case $K= \tau^{-{1 \over 2}}$ such estimates were established in \cite{IZ09}. This is  the only choice which ensures estimates without loss. Here, we will allow some loss depending on $K$ in order to optimize the total error.

\begin{theorem}
\label{theoDSE}
For every $(p,q)$ admissible with $p>2$, there exists $C>0$ such that for every $K$ and $\tau$ satisfying $K \tau^{1 \over 2} \geq 1$ \black and all $f\in L^2$, \black we have
\beq
\label{T}
\left\| S_{K}(n \tau) f\right\|_{l^p_{\tau}L^q} \leq C  ( K \tau^{1\over 2 })^{2\over p } \|f \|_{L^2}.
\eeq
For every $(p,q)$ admissible with $p>2$, there exists $C>0$ such that for every $K$ and $\tau$ satisfying $K \tau^{1 \over 2} \geq 1$ \black and all $F\in l^{p'}_\tau L^{q'}$, \black we have
\beq
\label{T*}
\left\|  \tau \sum_{n \in \mathbb{Z} }  S_{K} (-n\tau) F_{n} \right\|_{L^2} \leq C  ( K \tau^{1\over 2 })^{2\over p}  \| F\|_{l^{p'}_{\tau} L^{q'} }.
\eeq
For every $(p_{1},q_{1})$, $(p_{2}, q_{2}) $ admissible with $p_{1}>2,  \, p_{2}>2$, there exists $C>0$ such that for every $K$ and $\tau$ satisfying $K \tau^{1 \over 2} \geq 1$, \black all $s \in [-3,3]$ and all $F\in l^{p_2'}_\tau L^{q_2'}$, \black we have
\beq
\label{TT*}
\left\| \tau \black \sum_{k = -\infty}^{n-1} \black  S_{K} ((n-k+ s) \tau)  F_{k} \right\|_{l^{p_{1}}_{\tau}L^{q_{1}} }
\leq C ( K \tau^{1\over 2 } )^{ {2\over p_{1}} + { 2 \over p_{2} }   }  \| F\|_{l^{p_{2}'}_{\tau} L^{q_{2}'} }.
\eeq
%  Moreover, for $d\geq 2$,  we have the following endpoint estimate with loss, there exists $C>0$ such that
%    for every $K$,  $\tau$, $N$ such that $K \tau^{1 \over 2} \geq 1$ and $s \in [-2,2]$
%     and every admissible $(p,q)$ (including $p=2$, $d\geq 2$), we have
%  \beq
%  \label{TT*end}
%  \left\| \tau \sum_{0\leq k \leq n-1}   S_{K} ((n-k+s) \tau)  F_{k} \right\|_{l^{p}_{\tau, N}L^{q}}
%    \leq C ( K \tau^{ 1 \over 2})^{1+ {2\over p } } (\log N)^{{1\over 2}+ {1 \over p} } \, \| F\|_{l^{2}_{\tau, N} L^{q'_{1}} }
%    \eeq
%  and
%  \beq
%  \label{Tend}
%  \left\|S_{K}(n\tau) f \right\|_{l_{\tau, N}^2 L^{q_{1}}} \leq C (K \tau^{1 \over 2}) (\log N)^{1\over 2}  \|f \|_{L^2}.
%         \eeq
%    with $q_{1}= {2d \over d-2}$ ($q= +\infty$ if $d= 2$.)
\end{theorem}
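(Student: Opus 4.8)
The plan is to establish the three discrete Strichartz estimates \eqref{T}, \eqref{T*}, \eqref{TT*} following the now-standard $TT^*$ machinery, but carefully tracking the loss factor $(K\tau^{1/2})^{2/p}$ that arises from discretization. The starting point is that \eqref{T} and \eqref{T*} are dual to each other: writing $S_K(n\tau)^* = S_K(-n\tau)$ (since $\Pi_K$ is self-adjoint and $e^{it\Delta}$ is unitary), the estimate \eqref{T} for the operator $f \mapsto (S_K(n\tau)f)_n$ from $L^2$ to $l^p_\tau L^q$ is equivalent by duality to \eqref{T*} for its adjoint from $l^{p'}_\tau L^{q'}$ to $L^2$. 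So I would prove only one of them, say \eqref{T*}, and get the other for free.

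\textbf{Reduction to the composed operator.} For \eqref{TT*}, the key is the $TT^*$ identity: the operator $F \mapsto \tau\sum_{k} S_K((n-k+s)\tau)F_k$ factors (up to the shift $s$) as the composition of the adjoint-type operator appearing in \eqref{T*} with the operator in \eqref{T}. Concretely, write $S_K((n-k)\tau) = S_K(n\tau)\,\Pi_K\,S_K(-k\tau)$ using the semigroup property and $\Pi_K^2 = \Pi_K$ on the relevant frequencies (here the double cutoff $\chi^2$ in \eqref{PiKdef} is exactly what makes $\Pi_K$ behave like a genuine projector under composition). Then bounding the full sum over $k \in \mathbb{Z}$ by \eqref{T} followed by \eqref{T*} gives the product loss $(K\tau^{1/2})^{2/p_1 + 2/p_2}$. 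The retarded (Christ–Kiselev) truncation to $k < n$, together with the harmless shift $s\in[-3,3]$, is then recovered by the discrete Christ–Kiselev lemma, which applies precisely because $p_1, p_2 > 2$; this is why the endpoint is excluded.

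\textbf{The core estimate} — and the main obstacle — is proving \eqref{T} (equivalently \eqref{T*}) with the correct loss. By a standard $TT^*$ argument this reduces to bounding the kernel of $S_K(n\tau)S_K(-m\tau) = S_K((n-m)\tau)$ on $L^{q'} \to L^q$, i.e.\ to a dispersive decay estimate for $e^{ij\tau\Delta}\Pi_K^2$. The difficulty is that the continuous dispersive bound $\|e^{it\Delta}\|_{L^1\to L^\infty} \lesssim |t|^{-d/2}$ degenerates at $t=0$ and, more seriously, the discrete sum over $j = n-m$ must be controlled. I expect the heart of the proof to be a frequency-localized dispersive estimate of the form $\|e^{ij\tau\Delta}\Pi_K\|_{L^1\to L^\infty} \lesssim \min\{K^d, |j\tau|^{-d/2}\}$, where the $K^d$ bound handles small $|j|$ (using that $\Pi_K$ localizes to frequencies $\lesssim K$, so the kernel has $L^\infty$ norm $\lesssim K^d$) and the decay bound handles large $|j|$. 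Interpolating these against the trivial $L^2$ bound and summing the resulting discrete series $\tau\sum_j (\cdots)$ is where the factor $(K\tau^{1/2})^{2/p}$ emerges: the condition $K\tau^{1/2} \geq 1$ guarantees the crossover index is $\geq 1$, and the summation over the dispersive tail, combined with the hardest admissible case, produces exactly this power. Technically, one passes through the fractional integration / Hardy–Littlewood–Sobolev inequality in its discrete form, and the bookkeeping of how the cutoff scale $K$ interacts with the time step $\tau$ in the discrete sum is the delicate point that distinguishes this from the loss-free case $K = \tau^{-1/2}$ treated in \cite{IZ09}.
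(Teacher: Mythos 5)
Your proposal follows essentially the same route as the paper's proof: a frequency-localized dispersive bound $\|e^{it\Delta}\Pi_K\|_{L^1\to L^\infty}\lesssim \min\{K^d,|t|^{-d/2}\}$ (the paper's Lemma \ref{lemdisp}, proved there after a scaling reduction to $\tau=1$), the $\mathcal T\mathcal T^*$ argument combined with the discrete Hardy--Littlewood--Sobolev inequality (which is exactly where $p>2$ is needed), composition through $L^2$ for mixed admissible pairs, and the discrete Christ--Kiselev lemma for the retarded truncation. The only imprecision is your claim that $\Pi_K^2=\Pi_K$ ``thanks to the double cutoff $\chi^2$'': since $\chi$ is smooth this is false ($\Pi_K^2$ has symbol $\chi^4$), but the factorization survives either because $\Pi_{2K}\Pi_K=\Pi_K$ (the paper's Remark \ref{remlocal}) or because the dispersive bound applies equally to the multiplier $\chi^4$, so this is a cosmetic fix rather than a gap.
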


Note that we have excluded the endpoints in the statements of the Strichartz estimates \eqref{T}, \eqref{T*} and \eqref{TT*}.
% We will use  the endpoint \eqref{TT*end} one only at one place in this paper in order to perform
%     error estimates.  It is maybe possible to avoid the additional  loss in dimension $3$ as in the continuous case
%      \cite{Keel-Tao}.
Also note that in the estimate \eqref{TT*}, we have added in the definition of the operator a shift $s \tau$. Though it almost  does not change anything in the proof, taking into account this shift will be crucial to get the estimates of Proposition \ref{uKgrid} and the control of the local error. The proof of Theorem \ref{theoDSE} is postponed to Section \ref{proofDSE}.

It will be useful to convert the estimates of Theorem \ref{theoDSE} when $K= \tau^{-{\alpha \over 2}}$ with $\alpha\geq 1$ (a choice that we will make in order to optimize the error estimate)  into estimates with uniformly bounded constants but with loss of derivatives.

\begin{cor}
\label{corDSE}
For every $(p,q)$ admissible with $p>2$, there exists $C>0$ such that for every $0<\tau \leq 1$ and $K = \tau^{-{\alpha\over 2}}$, $\alpha \geq 1$,  we have
\beq
\label{Tloss}
\left\| S_{K}(n \tau) f\right\|_{l^p_{\tau}L^q} \leq C    \|f \|_{H^{ {2 \over p}( 1 - {1 \over \alpha} ) }}\qquad \black \text{for all \ $f\in H^{ {2 \over p}( 1 - {1 \over \alpha} ) }$}. \black
\eeq
For every $(p, q)$ admissible with  $p>2$, there exists $C>0$ such that for every  $0<\tau \leq 1$, $K = \tau^{-{\alpha \over 2}}$, \black $\alpha \ge 1$ \black and $s \in [-8,8]$ we have
\beq
\label{TT*loss}
\left\| \tau \black \sum_{k=-\infty}^{n-1} \black  S_{K} ((n-k+ s) \tau)  F_{k} \right\|_{l^{p}_{\tau}L^{q} } \leq C   \| F\|_{l^1_\tau  H^{ {2 \over p}( 1 - {1 \over \alpha} ) } }\qquad \black
\text{for all \ $F\in l^1_\tau  H^{ {2 \over p}( 1 - {1 \over \alpha} ) }$}. \black
\eeq
\end{cor}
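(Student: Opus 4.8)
The plan is to derive Corollary \ref{corDSE} directly from Theorem \ref{theoDSE} by substituting the specific choice $K = \tau^{-\alpha/2}$ and then trading the resulting $\tau$-power loss for a loss of derivatives via the frequency localization built into $\Pi_K$. The key observation is that with $K = \tau^{-\alpha/2}$ we have $K\tau^{1/2} = \tau^{(1-\alpha)/2} = K^{1 - 1/\alpha}$, since $\tau^{1/2} = K^{-1/\alpha}$. Thus the loss factor $(K\tau^{1/2})^{2/p}$ appearing in \eqref{T} equals $K^{\frac{2}{p}(1-\frac1\alpha)}$. The entire task is then to absorb this factor $K^{\frac{2}{p}(1-\frac1\alpha)}$ into a gain of $s_0 := \frac{2}{p}(1-\frac1\alpha)$ spatial derivatives, using the fact that $\Pi_K$ restricts to frequencies $|\xi| \lesssim K$.

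First I would prove \eqref{Tloss}. Starting from \eqref{T}, I have
$$
\left\| S_{K}(n \tau) f\right\|_{l^p_{\tau}L^q} \leq C\, K^{s_0} \|f\|_{L^2},
$$
so it suffices to show $K^{s_0}\|f\|_{L^2} \lesssim \|f\|_{H^{s_0}}$ \emph{once the frequency localization is accounted for}. The point is that $S_K(n\tau)f = e^{in\tau\Delta}\Pi_K f$ depends on $f$ only through $\Pi_K f$, whose Fourier support lies in $|\xi| \lesssim K$. On that frequency range the multiplier $K^{s_0}$ is bounded by $\langle\xi\rangle^{s_0}$ up to a constant (since $|\xi| \lesssim K$ forces $K^{s_0} \lesssim \langle\xi\rangle^{s_0}$ only when $|\xi| \sim K$, so one must be slightly careful for low frequencies). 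To make this rigorous I would insert a second harmless projector: since $\chi^2(\xi/K) = \chi^2(\xi/K)\,\chi^2(\xi/(2K))$ for a suitable fattened cutoff, I can write $\Pi_K f = \Pi_K \widetilde\Pi_K f$ and then estimate $K^{s_0}\|\Pi_K g\|_{L^2} \lesssim \| \langle -i\nabla\rangle^{s_0} g\|_{L^2} = \|g\|_{H^{s_0}}$ for any $g$ frequency-supported near $|\xi|\lesssim K$, because on that support $K^{s_0}\chi^2(\xi/K) \lesssim \langle\xi\rangle^{s_0}$ uniformly in $K\geq 1$. Applying this with $g = \widetilde\Pi_K f$ and using $\|\widetilde\Pi_K f\|_{H^{s_0}} \lesssim \|f\|_{H^{s_0}}$ closes the argument for \eqref{Tloss}.

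For \eqref{TT*loss} I would run the analogous reduction on the retarded estimate \eqref{TT*}, specializing to $p_1 = p_2 = p$ (so the total loss is $K^{\frac{2}{p}(1-\frac1\alpha)\cdot 2} = K^{2s_0}$) and to the dual exponent on the input. Starting from
$$
\left\| \tau \sum_{k=-\infty}^{n-1} S_{K}((n-k+s)\tau) F_{k} \right\|_{l^{p}_{\tau}L^{q}} \leq C\, K^{2 s_0}\, \|F\|_{l^{p'}_\tau L^{q'}},
$$
I face two discrepancies with the target \eqref{TT*loss}: the left factor must become a single $K^{s_0}$ absorbed into $H^{s_0}$ on the \emph{input}, and the input norm must be upgraded from $l^{p'}_\tau L^{q'}$ to $l^1_\tau H^{s_0}$. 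The cleaner route is to estimate each summand's contribution using only the single estimate \eqref{T} (not the full $TT^*$): writing the operator as $S_K((n-k+s)\tau)F_k$ and applying Minkowski's inequality in the $l^1_\tau$ sum over $k$, I reduce to bounding $\|S_K(m\tau)\Pi_K F_k\|_{l^p_\tau L^q}$ uniformly, which by \eqref{Tloss} (the shift $s\tau$ and the semigroup property only relocate the time index) is $\lesssim \|F_k\|_{H^{s_0}}$. Summing in $k$ with the $\tau$ weight gives exactly the $l^1_\tau H^{s_0}$ norm. This also explains why the statement uses $l^1_\tau$ rather than $l^{p'}_\tau$: the crude Minkowski step is what converts the retarded sum into an $l^1$ control, and it suffices for the subsequent application.

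\emph{The hard part} is the frequency-localization bookkeeping: one must ensure the bound $K^{s_0}\chi^2(\xi/K) \lesssim \langle\xi\rangle^{s_0}$ holds \emph{uniformly in $K\geq 1$} including the low-frequency regime $|\xi|\lesssim 1$, where $\langle\xi\rangle^{s_0}\sim 1$ but $K^{s_0}$ is large. This is precisely why the argument cannot simply pull out a scalar $K^{s_0}$; the gain in derivatives only materializes on the high-frequency part of the support. The resolution is a Littlewood–Paley-type splitting of $\Pi_K$ into a low piece (frequencies $\lesssim 1$, handled by the uniform-in-$\tau$ estimate at $K\sim 1$, which carries no loss) and a high piece (frequencies $\sim K$, where $K^{s_0}\lesssim\langle\xi\rangle^{s_0}$ genuinely holds), with the two pieces recombined by the triangle inequality. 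I expect the remaining verifications — the semigroup identity under the time shift, boundedness of the fattened projectors on $L^2$ and $H^{s_0}$, and the range $s\in[-8,8]$ being absorbed into the $s\in[-3,3]$ of Theorem \ref{theoDSE} by splitting off an integer number of time steps — to be routine.
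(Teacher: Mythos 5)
Your reduction of \eqref{TT*loss} to \eqref{Tloss} is sound: applying Minkowski's inequality to the sum over $k$, moving the shift $e^{is\tau\Delta}$ onto the data $F_k$ (where it is unitary on $H^{s_0}$, $s_0:=\frac{2}{p}(1-\frac{1}{\alpha})$), and observing that the $l^1_\tau$ input norm is exactly what makes this crude step affordable. The genuine gap is in your proof of \eqref{Tloss} itself. You correctly notice that the scalar loss $(K\tau^{1/2})^{2/p}=K^{s_0}$ from \eqref{T} cannot be absorbed into $\langle\xi\rangle^{s_0}$ at low frequencies, but your proposed fix --- splitting $\Pi_K$ into a piece with frequencies $\lesssim 1$ and a piece with frequencies $\sim K$ --- does not cover the support of $\Pi_K$. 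An arbitrary $f\in H^{s_0}$ has frequency content throughout the intermediate range $1\ll|\xi|\ll K$; for instance at $|\xi|\sim K^{1/2}$ one has $\langle\xi\rangle^{s_0}\sim K^{s_0/2}\ll K^{s_0}$, so the inequality $K^{s_0}\chi^2(\xi/K)\lesssim\langle\xi\rangle^{s_0}$ fails there just as badly as at $\xi=0$. The idea you are missing is that Theorem \ref{theoDSE} must never be invoked with the single truncation parameter $K$: the loss has to be made \emph{scale-dependent}. The paper's proof performs a full Littlewood--Paley decomposition $f=\sum_{k\geq-1}u_k$ and uses the identity $S_K(n\tau)u_k=S_{2^k}(n\tau)\Pi_K u_k$ (legitimate because $u_k$ is frequency-localized at scale $2^k$, so the effective truncation is $2^k$ and not $K$; for $2^k\lesssim\tau^{-1/2}$ one writes instead $S_{\tau^{-1/2}}(n\tau)\Pi_K u_k$ and gets no loss at all). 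Applying \eqref{T} at truncation level $2^k$ produces the loss $(2^k\tau^{1/2})^{2/p}$, and Bernstein's inequality together with $\tau^{1/2}\leq 2^{-k/\alpha}$ (valid on the relevant blocks $2^k\leq K=\tau^{-\alpha/2}$) converts this into exactly $\|u_k\|_{H^{s_0}}$. Your two-piece scheme has no mechanism for producing a loss smaller than $K^{s_0}$ on these intermediate dyadic blocks, so the argument cannot close.

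A secondary defect: even granting the full dyadic decomposition, you propose to recombine the pieces ``by the triangle inequality''. That yields $\sum_k\|u_k\|_{H^{s_0}}$, which is the Besov norm $B^{s_0}_{2,1}$ and is \emph{not} controlled by $\|f\|_{H^{s_0}}$; you would have to give up an $\epsilon$ of derivatives or accept a $|\log\tau|$ factor, weakening the corollary (and, downstream, the rates in Theorem \ref{maintheo}). The paper avoids this by using Minkowski's inequality, legitimate since $p\geq 2$ and $q\geq 2$, to pass the $l^2$-sum over blocks inside:
\begin{equation*}
\left\|S_K(n\tau)f\right\|_{l^p_\tau L^q}\lesssim\left(\sum_{k\geq-1}\left\|S_K(n\tau)u_k\right\|_{l^p_\tau L^q}^2\right)^{1/2},
\end{equation*}
after which the $l^2$-sum of the blockwise bounds $\|u_k\|_{H^{s_0}}$ recombines to precisely $\|f\|_{H^{s_0}}$.
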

Note that, since $\tau^{1 \over p}\| \Pi_{K} f\|_{L^q} \leq  \left\| S_{K}(n \tau) f\right\|_{l^p_{\tau}L^q}$ the estimate \eqref{Tloss} also encodes the modified Sobolev estimate
\beq
\label{sobmod}
\tau^{1 \over p} \| \Pi_{K} f\|_{L^q} \leq C \|f \|_{H^{ {2 \over p}( 1 - {1 \over \alpha} ) }}.
\eeq
The proof of this estimate is postponed to Section \ref{sec:corDSE}.

\section{$H^1$ Cauchy problem for \eqref{nls}}
\label{sectioncontinue}

Let us recall the following well-known result for \eqref{nls}. We refer, for example, to the book \cite{Linares}.

\begin{theorem}
\label{theoNLS}
For $d \leq 3$ and for every $u_{0}\in H^1$, there exists for every $T>0$ a unique solution of \eqref{nls} in $\mathcal{C}([0,T],  H^1)$ such that $u(0)= u_{0}$. Moreover, this solution is such that  $u, \,\nabla u \in L^p_{T}L^q$ for every admissible $(p,q)$.
\end{theorem}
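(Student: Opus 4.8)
The final statement to prove is Theorem \ref{theoNLS}, the $H^1$ local (and global, via conservation laws) well-posedness of the cubic NLS in dimensions $d\le 3$, together with the assertion that $u,\nabla u\in L^p_T L^q$ for every admissible pair.

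\bigskip

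The plan is to prove local existence by a contraction mapping argument on the Duhamel formulation, then upgrade to a global solution using the conserved mass and energy, and finally derive the claimed Strichartz space-time integrability as a byproduct of the fixed-point space. First I would set up the integral equation $u(t)=e^{it\Delta}u_0 - i\int_0^t e^{i(t-s)\Delta}|u|^2u(s)\dd s$ and seek a fixed point of the map $\Phi$ in a suitable complete metric space. The natural choice is a ball in $X_T = \mathcal{C}([0,T],H^1)\cap L^{p_0}_T W^{1,q_0}$ for a conveniently chosen admissible pair $(p_0,q_0)$, with the metric taken in the weaker $L^2$-based norm $\mathcal{C}([0,T],L^2)\cap L^{p_0}_T L^{q_0}$ (this weaker metric makes the nonlinear estimates close and keeps the space complete). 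The admissible pair should be adapted to the dimension so that Hölder in space plus Sobolev embedding controls the cubic term; for instance in $d\le 3$ one can take $(p_0,q_0)$ with $q_0$ near the energy-admissible exponent so that $H^1\hookrightarrow L^r$ is available for the exponents produced by the trilinear Hölder estimate.

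\bigskip

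The key estimates are the inhomogeneous Strichartz bound \eqref{TT*c} applied to the Duhamel term, together with the fact that $\nabla$ commutes with $e^{it\Delta}$, so that $\|\Phi(u)\|_{X_T}\lesssim \|u_0\|_{H^1} + \big\||u|^2u\big\|_{L^{p_2'}_T W^{1,q_2'}}$. I would then estimate the nonlinearity by the Leibniz rule and Hölder's inequality: $\big\|\nabla(|u|^2u)\big\|_{q_2'}\lesssim \|u\|^2_{L^{a}}\|\nabla u\|_{L^{b}}$ for appropriately conjugate exponents, and bound each factor via Sobolev embedding $\|u\|_{L^a}\lesssim \|u\|_{H^1}$ and the Strichartz norm $\|\nabla u\|_{L^{p_0}_T L^{q_0}}$. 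The time integration produces a positive power of $T$ (since the dual time exponent $p_2'$ is strictly smaller than $p_0$ for the admissible choices in $d\le3$), which yields a contraction factor $T^\theta$ with $\theta>0$. Choosing $T$ small depending only on $\|u_0\|_{H^1}$ makes $\Phi$ a self-map on a ball and a strict contraction in the weaker metric, giving a unique local solution; uniqueness in the full space follows by a standard argument comparing two solutions in the $L^2$-metric.

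\bigskip

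To pass from local to global I would invoke the two conservation laws of \eqref{nls}, namely conservation of mass $\|u(t)\|_{L^2}=\|u_0\|_{L^2}$ and of the energy $E(u)=\tfrac12\|\nabla u\|_{L^2}^2+\tfrac14\|u\|_{L^4}^4$, which for the defocusing equation immediately bound $\|u(t)\|_{H^1}$ uniformly in time; since the local existence time depends only on $\|u_0\|_{H^1}$, the solution extends to arbitrary $T$ by reapplying the local theory on successive intervals. The space-time bounds $u,\nabla u\in L^p_T L^q$ for \emph{every} admissible $(p,q)$ then follow by feeding the global $\mathcal{C}([0,T],H^1)$ bound back into the Duhamel formula and applying the homogeneous and inhomogeneous Strichartz estimates \eqref{Tc}--\eqref{TT*c} for each admissible pair on a partition of $[0,T]$ into short subintervals. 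The main obstacle, and the step requiring the most care, is the bookkeeping of exponents in the trilinear estimate so that a genuinely positive power of $T$ appears while all Sobolev embeddings stay subcritical in the relevant range $d\le 3$; once these exponents are arranged consistently the rest is the standard contraction-plus-conservation-law machinery, and for $d=3$ one must in particular check that $H^1$ is not energy-critical so that a subcritical time power is indeed available.
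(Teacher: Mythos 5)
The paper gives no proof of this theorem at all: it is quoted as a classical well-posedness result, with a pointer to the book of Linares and Ponce \cite{Linares}. Your sketch is exactly the standard argument from that literature --- contraction mapping for the Duhamel formulation in $\mathcal{C}([0,T],H^1)\cap L^{p_0}_T W^{1,q_0}$ with the weaker $L^2$-based metric, a positive power of $T$ coming from the $H^1$-subcriticality of the cubic nonlinearity in $d\le 3$, mass and energy conservation for the defocusing equation to globalize, and a final application of the Strichartz estimates \eqref{Tc}--\eqref{TT*c} on a partition of $[0,T]$ to obtain $u,\nabla u\in L^p_T L^q$ for every admissible pair --- so it is correct in approach and coincides with the argument the paper implicitly relies on by citation.
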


Note that in the focusing case, there exists under the same assumptions  a maximal $H^1$ solution defined on $[0, T^*)$ (and in this case $T^*$ can be finite) with similar properties. All our convergence estimates thus extend to the focusing case on $[0, T]$ for every $T<T^*$.

Let us now consider a frequency truncated equation
\beq
\label{NLSK}
i \partial_{t} u^K = - \Delta  u^K + \Pi_{K}(|\Pi_{K}u^K|^2 \Pi_{K}u^K), \quad  u^K(0)= \Pi_{K} u_{0}.
\eeq
As in Theorem \ref{theoNLS}, we can  easily get:

\begin{proposition}
\label{propNLSK}
For $d \leq 3$, $u_{0} \in H^1$, and $K \geq 1$, there exists a unique solution of \eqref{NLSK} such that $u^K \in \mathcal{C}([0,T],  H^1)$ for every $T \geq 0$. Moreover   $u^K, \,\nabla u^K \in L^p_{T}L^q$ for every admissible $(p,q)$. More precisely, for every $T \geq 0$ and every $(p,q)$ admissible, there exists $C_{T}>0$ such that for all $K\ge 1$ we have
$$
\| u^K\|_{L^p_{T}W^{1, q}} \leq C_{T}.
$$
\end{proposition}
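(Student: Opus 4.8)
The plan is to reproduce the $H^1$ well-posedness theory behind Theorem~\ref{theoNLS}, checking at every step that the constants can be chosen independently of $K$. The whole argument is driven by two structural features of the filter $\Pi_K$ from \eqref{PiKdef}. First, $\Pi_K$ commutes with $e^{it\Delta}$ and with $\nabla$, is self-adjoint on $L^2$, and commutes with complex conjugation (as $\chi^2$ is real and even). Second, and crucially for uniformity, writing $\Pi_K f = f * \mathcal{K}_K$ with $\mathcal{K}_K(x) = K^d\,\Psi(Kx)$, where $\Psi = \mathcal{F}^{-1}(\chi^2)\in\mathcal{S}(\mathbb{R}^d)$ is a fixed Schwartz function, the kernel is an $L^1$-normalised dilate, so Young's inequality gives $\|\Pi_K\|_{L^q\to L^q}\le\|\Psi\|_{L^1}$ for every $1\le q\le\infty$, uniformly in $K\ge 1$. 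Consequently the filtered nonlinearity $N(v):=\Pi_K(|\Pi_K v|^2\Pi_K v)$ obeys exactly the same trilinear bounds as the unfiltered cubic term, with $K$-independent constants.

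First I would establish local well-posedness by a contraction argument on the mild formulation
\[
u^K(t) = e^{it\Delta}\Pi_K u_0 - i\int_0^t e^{i(t-s)\Delta}N(u^K(s))\dd s
\]
in a Strichartz space $X_T = \mathcal{C}([0,T],H^1)\cap\{v:\ v,\nabla v\in L^p_TL^q \text{ for admissible }(p,q)\}$, using \eqref{Tc}, \eqref{T*c} and \eqref{TT*c}. After peeling off the uniformly bounded operators $\Pi_K$, the nonlinear contribution is controlled by the standard cubic estimate valid for $d\le 3$ (H\"older in space--time combined with the admissible exponents and the embeddings of $W^{1,q}$). Since every occurrence of $\Pi_K$ costs only the $K$-independent factor $\|\Psi\|_{L^1}$, the contraction closes on a time interval $T_0$ depending only on $\|u_0\|_{H^1}$, not on $K$.

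Next, to globalise with a $K$-uniform a priori bound, I would use the two conservation laws of \eqref{NLSK}. Mass conservation $\|u^K(t)\|_{L^2}=\|\Pi_K u_0\|_{L^2}\le\|u_0\|_{L^2}$ follows by pairing \eqref{NLSK} with $u^K$ and using self-adjointness of $\Pi_K$, so that $\langle N(u^K),u^K\rangle=\int|\Pi_K u^K|^4$ is real. For the energy I would observe that \eqref{NLSK} is the Hamiltonian flow of $E^K(v)=\tfrac12\|\nabla v\|_{L^2}^2+\tfrac14\|\Pi_K v\|_{L^4}^4$, and that a direct computation yields $\tfrac{d}{dt}E^K(u^K)=\operatorname{Re}\int\overline{(-\Delta u^K+N(u^K))}\,\partial_t u^K=\operatorname{Re}\int\overline{(i\partial_t u^K)}\,\partial_t u^K=0$. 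As the equation is defocusing, both terms of $E^K$ are nonnegative, whence $\|\nabla u^K(t)\|_{L^2}^2\le 2E^K(\Pi_K u_0)$; the uniform $L^q$-boundedness of $\Pi_K$ together with the embedding $H^1\hookrightarrow L^4$ (valid for $d\le 3$) bounds $E^K(\Pi_K u_0)$ by $C(\|u_0\|_{H^1})$ uniformly in $K$. Combined with mass conservation this gives $\sup_{t\ge 0}\|u^K(t)\|_{H^1}\le C(\|u_0\|_{H^1})$ uniformly in $K$, and since $T_0$ depends only on this norm, the usual continuation argument produces the global solution.

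Finally, the space--time bound $\|u^K\|_{L^p_TW^{1,q}}\le C_T$ follows by splitting $[0,T]$ into $O(T/T_0)$ subintervals---a number independent of $K$ by the uniform a priori bound---running the local Strichartz estimate on each, and summing. I expect the only genuine computation to be the energy conservation identity, that is, verifying that the Hamiltonian structure survives the filtering; everything else is a faithful repetition of the unfiltered $H^1$ theory, with all constants kept uniform in $K$ through the $L^q$-boundedness of $\Pi_K$.
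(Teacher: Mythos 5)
Your proposal is correct and takes essentially the same route the paper intends: the paper gives no details, stating only that Proposition~\ref{propNLSK} ``follows exactly the lines of the proof of Theorem~\ref{theoNLS}'', i.e.\ the standard defocusing $H^1$ theory (Strichartz-based contraction for local well-posedness, globalization via conservation of mass and energy, then iteration of the local Strichartz bounds). Your write-up is a faithful execution of exactly that scheme, with the two points the paper leaves implicit done correctly -- the $K$-uniform $L^q$-boundedness of $\Pi_K$ via Young's inequality with the dilated Schwartz kernel, and the verification that the filtered energy $E^K(v)=\tfrac12\|\nabla v\|_{L^2}^2+\tfrac14\|\Pi_K v\|_{L^4}^4$ is the conserved Hamiltonian of \eqref{NLSK}.
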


We shall not detail the proof of this proposition that follows exactly the lines of the proof of Theorem \ref{theoNLS}.

\begin{rem}
\label{remlocal}
Note that, since  $\Pi_{2K} \Pi_{K}= \Pi_{K}$, we have that $\Pi_{2K} u_{K}$ solves the same equation \eqref{NLSK} with the same initial data and hence we have by uniqueness that
$$
\Pi_{2K} u^K (t) = u^K(t) \quad \text{for all \ $t \in [0,T]$.}
$$
\end{rem}

We can also easily get the following corollary.
\begin{cor}
\label{corNLSK}
For $d \leq 3$, $u_{0} \in H^1$ and every $T>0$, there exists $C_{T}>0$ such that for every $K \geq 1$, we have the estimate
$$
\|u -u^K \|_{L^\infty_{T}L^2} \leq { C_{T} \over K}.
$$
\end{cor}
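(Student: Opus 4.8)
The plan is to estimate the difference $w := u - u^K$ directly in $L^2$ by writing down the PDE it satisfies and using a Gronwall-type argument, with the key point being that the right-hand side forcing is controlled purely by the frequency truncation error $(1-\Pi_K)u$, which is of order $1/K$ in $L^2$ thanks to the $H^1$ regularity of $u$. First I would subtract \eqref{NLSK} from \eqref{nls} to obtain
$$
i\partial_t w = -\Delta w + \bigl(|u|^2 u - \Pi_K(|\Pi_K u^K|^2 \Pi_K u^K)\bigr).
$$
I would then split the nonlinear defect into two parts: a part measuring the difference between the true cubic nonlinearity and its filtered version evaluated at $u$, namely $|u|^2 u - \Pi_K(|\Pi_K u|^2 \Pi_K u)$, plus a part that is a cubic-type difference in the argument $u - u^K = w$ with at least one factor of $w$.

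For the stability (the $w$-dependent) part, I would use the standard $L^2$ energy estimate: multiplying by $\overline{w}$ and integrating, the $-\Delta w$ term is skew-adjoint and drops out, and the nonlinear difference is estimated by a trilinear bound of the form $\|\cdots\|_{L^2} \lesssim (\|u\|_{L^\infty}^2 + \|u^K\|_{L^\infty}^2)\,\|w\|_{L^2}$, using that $\Pi_K$ is bounded on every $L^r$ uniformly in $K$. Here I would want to avoid $L^\infty$ control (which is not available for $H^1$ data when $d\geq 2$), so instead I would work with the Strichartz / $L^p_T L^q$ norms of $u$ and $u^K$ guaranteed by Theorem~\ref{theoNLS} and Proposition~\ref{propNLSK}: writing the trilinear term with the two ``coefficient'' factors placed in suitable dual Lebesgue spaces and $w$ in $L^\infty_T L^2$, then integrating in time and applying Hölder. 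This yields
$$
\|w(t)\|_{L^2} \lesssim \|w(0)\|_{L^2} + \int_0^t (\text{forcing}) + \int_0^t a(s)\,\|w(s)\|_{L^2}\dd s
$$
with $a \in L^1_T$ (coming from the uniform-in-$K$ space-time bounds), so Gronwall closes the argument.

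For the forcing term $\|\,|u|^2 u - \Pi_K(|\Pi_K u|^2 \Pi_K u)\,\|$, I would insert and remove $\Pi_K$ factors one at a time, so that each resulting difference contains a factor $(1-\Pi_K)u$ (or $(1-\Pi_K)$ acting on a product), and estimate that factor in $L^2$ by $\|(1-\Pi_K)u\|_{L^2} \lesssim K^{-1}\|u\|_{H^1}$, since $\chi^2(\xi/K)=1$ on $|\xi|\le K$ forces the symbol $1-\chi^2(\xi/K)$ to be supported in $|\xi|\gtrsim K$ where $1 \lesssim |\xi|/K$. The remaining two factors would be placed in appropriate $L^q$ norms controlled by the $L^p_T L^q$ bounds for $u$; again Hölder in time gives an $L^1_T$-in-time, hence $O(1/K)$ after integrating over $[0,T]$. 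Combining this $O(1/K)$ forcing with the Gronwall constant (uniform in $K$) and using $w(0)=(1-\Pi_K)u_0$, also $O(1/K)$ in $L^2$, produces $\|u-u^K\|_{L^\infty_T L^2}\le C_T/K$.

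The main obstacle I expect is the trilinear stability estimate in dimensions $d=2,3$ without invoking $L^\infty$ bounds on $u$ or $u^K$: I must distribute the three factors across Strichartz pairs so that every factor lands in a norm that is finite and uniformly bounded in $K$ (for $u^K$, via Proposition~\ref{propNLSK}), while keeping $w$ measured only in $L^2$ and producing an integrable-in-time coefficient. Getting the Hölder exponents admissible and consistent with the available $L^p_T W^{1,q}$ bounds is the delicate bookkeeping; everything else is routine.
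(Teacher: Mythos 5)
Your overall skeleton is the paper's: decompose the defect so that the $w$-independent part carries a factor $(1-\Pi_K)u$ (or $(1-\Pi_K)$ of a product), estimate that factor in $L^2$ by $K^{-1}\|u\|_{H^1}$, treat the $w$-dependent part as a stability term, and close with a Gronwall/absorption argument plus $w(0)=(1-\Pi_K)u_0=O(1/K)$; the paper does exactly this (via Duhamel's formula and absorption on small subintervals rather than an energy identity and Gronwall, an immaterial difference). However, your stability step as written contains a genuine inconsistency. In the energy method, after multiplying by $\overline{w}$ and integrating in space, the trilinear term has \emph{both} copies of $w$ measured in $L^2_x$; H\"older then forces the coefficient pair into $L^\infty_x$ --- there are no ``suitable dual Lebesgue spaces'' for the coefficients compatible with keeping $w$ only in $L^2$. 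So the workaround you propose to avoid $L^\infty$ would not produce a closed estimate within the energy framework. (It would be viable if you instead ran Duhamel plus the \emph{inhomogeneous} Strichartz estimate \eqref{TT*c}, putting the trilinear term in a dual norm $L^{p'}_{T}L^{q'}$; but that is a different mechanism from the one you describe.)

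The good news is that the obstacle you are trying to evade is not actually there. What Gronwall needs is not a pointwise-in-time $L^\infty_x$ bound but only $a\in L^1_T$ with $a(t)=\|u(t)\|_{L^\infty}^2+\|u^K(t)\|_{L^\infty}^2$, and this \emph{is} available for $H^1$ data in $d\le 3$: choose an admissible pair $(p,q)$ with $q>d$, so that $W^{1,q}\hookrightarrow L^\infty$; then Theorem~\ref{theoNLS} and Proposition~\ref{propNLSK} give $u,u^K\in L^p_T W^{1,q}$ uniformly in $K$, and H\"older in time yields
\begin{equation*}
\|u\|_{L^2_T L^\infty}+\|u^K\|_{L^2_T L^\infty}\lesssim T^{\frac12-\frac1p}\bigl(\|u\|_{L^p_T W^{1,q}}+\|u^K\|_{L^p_T W^{1,q}}\bigr),
\end{equation*}
which is precisely the bound \eqref{borne2} the paper relies on. The same remark applies to your forcing term: to keep the full $1/K$ gain, the factor $(1-\Pi_K)u$ must sit in $L^2_x$, so the two remaining factors must sit in $L^\infty_x$ (time-integrated via the $L^2_TL^\infty$ bound), not in generic $L^q$ norms; for the piece $(1-\Pi_K)(|u|^2u)$ you additionally need $\|\nabla(|u|^2u)\|_{L^2}\lesssim \|u\|_{L^\infty}^2\|\nabla u\|_{L^2}$, again with the $L^\infty_x$ factors integrated in time. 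With these corrections your argument closes and coincides with the paper's proof.
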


This will allow us to discretize in time the projected equation for $u^K$ only.

\begin{proof}
Let us first take $M_{T}$ such that by using Theorem \ref{theoNLS} and Proposition \ref{propNLSK}, we have
\beq
\label{borne1}
\| u^K\|_{L^p_{T}W^{1, q}} + \| u\|_{L^p_{T}W^{1, q}}\leq M_{T}, \quad K \geq 1
\eeq
for  some $(p, q)$ admissible with $q$ such that $d<q<  2 + { 4 \over d-2}$ ($q<\infty$ if $d= 1, \, 2$) so that $W^{1,q}$ is embedded in $L^\infty$. Note that $M_{T}$ in general  depends on $T$. In the following and \black more generally, \black  we will denote by $M_{T}$ \black a generic constant \black that depends on $T$.

We \black further note that \eqref{borne1} in particular yields, by using successively the Sobolev embedding and H\"older's inequality, \black that
\beq
\label{borne2}
\|u^K\|_{L^2_{T}L^\infty} +\|u\|_{L^2_{T}L^\infty}  \lesssim  T^{ 1 - {2 \over p} }( \|u^K\|_{L^p_{T} W^{1, q}}  +  \|u\|_{L^p_{T} W^{1, q}}) \lesssim \black T^{ 1 - {2 \over p} } \black M_{T},
\eeq
where $(p,q)$ is admissible and $q$ is such that $d<q<  2 + { 4 \over d-2}$ ($q<\infty$ if $d= 1, \, 2$).

We first observe that
$$
\| u - \Pi_{K} u \|_{L^\infty_{T}L^2} \lesssim  {1 \over K} \| \nabla u \|_{L^\infty_{T}L^2} \lesssim  {M_{T} \over K}.
$$
By using Duhamel's formula, we have that
\begin{multline*}
u (t) - u^K(t) = e^{it \Delta}  (1-\Pi_{K}) u_{0}  - i \int_{0}^t e^{i (t-s ) \Delta }  \Pi_{K}( |u|^2 u - |\Pi_{K}u|^2 \Pi_{K} u  ) \dd s  \\
- i \int_{0}^t e^{i (t-s ) \Delta } \Pi_{K} ( |\Pi_{K}u|^2 \Pi_{K} u - |\Pi_{K}u^K|^2 \Pi_{K} u^K  ) \dd s - i \int_{0}^t e^{i (t-s ) \Delta } (1- \Pi_{K}) ( |u|^2 u )\dd s.
\end{multline*}
From standard  estimates, we then obtain that for every $T_{1} \leq T$,
\begin{multline*}
\|u- u^K \|_{L^\infty_{T_{1}}L^2} \leq {C \over K } +  C \|u- \Pi_{K} u \|_{L^\infty_{T}L^2} \Bigl( \|u\|_{L^2_{T}L^\infty}^2 +   \|\Pi_{K}u\|_{L^2_{T}L^\infty}^2\Bigr) \\
+  C \| u - u^K \|_{L^\infty_{T_{1}}L^2} \Bigl( \|u\|_{L^2_{T}L^\infty}^2 + \|u^K\|_{L^2_{T}L^\infty}^2\Bigr) +  {C \over K} \| u \|_{L^\infty_{T}H^1} \|u\|_{L^2_{T}L^\infty}^2,
\end{multline*}
where $C>0$ is a number independent of $T$ and $T_{1}$. Consequently, by using \eqref{borne1} and \eqref{borne2}, we obtain that
$$
\|u- u^K \|_{L^\infty_{T_{1}}L^2} \leq {M_{T}  \over K} + C T_{1}^{ 2 - {4 \over p} }  \|u- u^K \|_{L^\infty_{T_{1}}L^2} M_{T}^2,
$$
where $p$ is in particular such that $2/p <1$. Consequently, we can choose $T_{1}$ sufficiently small such that
$$
\black C T_{1}^{ 2 - {4 \over p} }  M_{T}^2 \leq {1 \over 2} \black
$$%
and we obtain that
$$
\|u- u^K \|_{L^\infty_{T_{1}}L^2} \leq { 2M_{T}  \over K} .
$$
This proves the desired estimate on $[0, T_{1}]$. We can then perform the same argument on $[T_{1}, 2T_{1}], \cdots$ to finally get that
$$
\|u- u^K \|_{L^\infty_{T}L^2} \leq { C _{T} \over K},
$$
where $C_{T}$ behaves like \black $e^{CT}M_T$. \black
\end{proof}

\section{Discrete Strichartz estimates of the exact solution}
\label{sectioncontdis}

In this section, we shall  prove that the sequence $(u^K(t_{k}))_{0 \leq k \leq N}$ where $u^K$ solves \eqref{NLSK} satisfies  discrete Strichartz estimates. This will be important in the following to estimate the local error and to control the stability of the scheme.

Let us first notice that by the Sobolev embedding $H^1 \subset L^q$, we have thanks to Proposition \ref{propNLSK} an estimate $\| u^K(t_{k}) \|_{l^p_{\tau, N}L^q} \leq  C_{T}$ for every $(p,q)$ admissible. Nevertheless, this is not sufficient for our purpose. Indeed, for the estimate of the local error, we shall also need discrete  Strichartz estimates of $\nabla u^K(t_{k})$. Moreover, to prove the stability of the scheme, we shall also need an estimate without loss of the form  $ \| u^K(t_{k}) \|_{l^2_{\tau, N}L^\infty} \leq  C_{T} $ that does not follow from Sobolev embedding in dimensions~$2$ and $3$.

Let us start with  an estimate that will ensure a uniform control of $\| u^K(t_{k}) \|_{l^2_{\tau, N}L^\infty}$. This will be crucial in the proof of the stability of the scheme.

\begin{defi}
Let $K= \tau^{-{\alpha \over 2}}$ \black for some \black $\alpha \geq 1$. We say that $(p,q, \sigma)$ verifies property {\bf (H)} if:
$$
(p,q) \mbox{ is admissible}, \quad p>2, \quad \sigma q>d, \quad  \sigma+ {2 \over p} \left(1 - {1 \over \alpha}\right) \leq 1.
$$
\end{defi}

\begin{rem}
\label{remcond}
Let us  check \black that the set of triples \black $(p,q,\sigma)$ verifying {\bf (H)} is not empty. In dimension~$1$, we can clearly take $q=2$, $p=\infty$ and any $\sigma \in (1/2, 1]$ due to Sobolev embedding.

In dimension $2$, by taking $\sigma = {2 \over q} + \epsilon$, $\epsilon >0$, (note that it is enough to have the embedding $W^{\sigma, q}\subset L^\infty$), $(p,q,\sigma)$ verifies {\bf (H)} if  $(p,q)$ is admissible, $p>2$ and $ \epsilon - { 2 \over p\alpha} \leq 0$. It can be satisfied for any $(p,q)$ admissible with $p<\infty$ by taking $\epsilon = { 1 \over p \alpha}$.

In dimension $3$, the set of $(p,q,\sigma)$ verifying {\bf (H)} is not empty if $\alpha <2$. Indeed, by  taking again $\sigma = {3 \over q} + \epsilon$, $\epsilon >0$,  we need to verify ${1 \over 2 }+ \epsilon \leq  {2 \over p \alpha}$ which means that we can find $\epsilon$ if $ 1 \leq \alpha <{4\over p}.$  Consequently, if $\alpha <2$, we can find $p>2$ such that this is satisfied.
\end{rem}

\begin{proposition}
\label{propuKinfty}
\black Let $K=\tau^{-{\alpha \over 2}}$ for some $\alpha \geq 1$, \black $\alpha <2$ in dimension $3$. \black Further, let $(p,q,\sigma)$ verify \black property {\bf (H)}. Then for every $T>0$, there exists $C_{T}$ such that for every $\tau \in (0, 1]$ and every $\hat s \in [- 2\tau ,  2\tau], $ we have the estimate
\beq
\label{Linftypq}
\sup_{ s \in [0, \tau]}\|e^{i \hat s  \Delta}u^K(t_{k}+s)\|_{l^p_{\tau, \lfloor N-{s \over \tau}\rfloor}W^{\sigma, q}} \leq C_{T}.
\eeq
\end{proposition}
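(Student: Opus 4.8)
The plan is to start from Duhamel's formula for the solution $u^K$ of \eqref{NLSK}, namely
$$
u^K(t_k + s) = e^{i(t_k+s)\Delta}\Pi_K u_0 - i\int_0^{t_k+s} e^{i(t_k+s-s')\Delta}\Pi_K g(s')\dd s',\qquad g:=|\Pi_K u^K|^2\Pi_K u^K,
$$
to apply $e^{i\hat s\Delta}$ to both sides, and to estimate the linear and the Duhamel contributions separately in $l^p_{\tau}W^{\sigma,q}$. Two reductions are used throughout. First, writing $J^\sigma:=(1-\Delta)^{\sigma/2}$ for the Fourier multiplier that defines the $W^{\sigma,q}$- and $H^\sigma$-norms and observing that $J^\sigma$ commutes with $e^{it\Delta}$ and with $\Pi_K$, I would turn every $W^{\sigma,q}$-norm into the $L^q$-norm of $J^\sigma(\cdots)$, so that the discrete Strichartz estimates of Corollary~\ref{corDSE} become applicable. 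Second, the gain of $\sigma$ derivatives is exactly paid for by the budget $\sigma+\frac2p(1-\frac1\alpha)\le1$ from property~{\bf (H)}: it ensures $\|J^\sigma\phi\|_{H^{\frac2p(1-\frac1\alpha)}}=\|\phi\|_{H^{\sigma+\frac2p(1-\frac1\alpha)}}\le\|\phi\|_{H^1}$, so that in the end all source terms need only be measured in $H^1$.

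For the linear part I would use that $J^\sigma e^{i\hat s\Delta}e^{i(t_k+s)\Delta}\Pi_K u_0=S_K(t_k)\big(e^{i(s+\hat s)\Delta}J^\sigma u_0\big)$, since all operators in play are Fourier multipliers. As $e^{i(s+\hat s)\Delta}$ is an isometry of every Sobolev space, the function $h:=e^{i(s+\hat s)\Delta}J^\sigma u_0$ satisfies $\|h\|_{H^{\frac2p(1-\frac1\alpha)}}=\|u_0\|_{H^{\sigma+\frac2p(1-\frac1\alpha)}}\le\|u_0\|_{H^1}$ uniformly in $s$ and $\hat s$, whence \eqref{Tloss} gives $\|S_K(t_k)h\|_{l^p_\tau L^q}\le C\|u_0\|_{H^1}$. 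It is worth noting that no shifted homogeneous estimate is required here: the extra shift sits on the input side and is simply absorbed by the isometry.

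For the Duhamel part I would split the integral over $[0,t_k+s]$ into the full grid cells $[t_j,t_{j+1}]$ with $0\le j\le k-1$ and the partial cell $[t_k,t_k+s]$. On a full cell, the substitution $s'=t_j+\theta$ turns the propagator into $S_K\big((k-j)\tau+(s+\hat s-\theta)\big)$; since $s\in[0,\tau]$, $\hat s\in[-2\tau,2\tau]$ and $\theta\in[0,\tau]$, the fractional shift $(s+\hat s-\theta)/\tau$ ranges in $[-3,3]\subset[-8,8]$, precisely the interval allowed in \eqref{TT*loss}. After commuting $J^\sigma$ inside and pulling the $\theta$-integral out of the $l^p_\tau L^q$-norm by Minkowski's inequality, I would apply \eqref{TT*loss} for each fixed $\theta$ with $F_j=J^\sigma g(t_j+\theta)$, obtaining a bound by $C\sum_j\|g(t_j+\theta)\|_{H^1}$; integrating in $\theta\in[0,\tau]$ and summing the cells then reconstitutes the continuous integral and yields the clean bound $C\|g\|_{L^1_T H^1}$. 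The partial cell I would treat directly: estimating its integrand in $W^{\sigma,q}$ by the modified Sobolev inequality \eqref{sobmod} composed with the Sobolev isometry of $e^{it\Delta}$ gives $\lesssim\tau^{-1/p}\|g\|_{L^1([t_k,t_{k+1}],H^1)}$, whose $l^p_\tau$-norm is controlled, via the embedding $l^1\hookrightarrow l^p$ ($p>1$), again by $C\|g\|_{L^1_T H^1}$.

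It then remains to bound $\|g\|_{L^1_T H^1}$ uniformly in $K$ and $\tau$. By the product rule and H\"older's inequality, $\|g\|_{H^1}\lesssim\|\Pi_K u^K\|_{L^\infty}^2\|\Pi_K u^K\|_{H^1}$, so that $\|g\|_{L^1_T H^1}\lesssim\|\Pi_K u^K\|_{L^\infty_T H^1}\,\|\Pi_K u^K\|_{L^2_T L^\infty}^2$. The first factor is uniformly bounded by Proposition~\ref{propNLSK} (using that $\Pi_K$ is uniformly bounded on $H^1$), while the second is bounded, for $d\le3$, by combining the Strichartz bound $\|u^K\|_{L^{p_0}_T W^{1,q_0}}\le C_T$ of Proposition~\ref{propNLSK} for an admissible pair $(p_0,q_0)$ with $q_0>d$ (so that $W^{1,q_0}\subset L^\infty$) with H\"older in time, exactly as in \eqref{borne2}. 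This is the one point where the Sobolev embedding of $H^1$ alone fails in dimensions $2$ and $3$ and where the Strichartz control becomes essential. Collecting the three contributions and taking the supremum over $s\in[0,\tau]$ would prove \eqref{Linftypq}. I expect the main obstacle to be the careful bookkeeping of the fractional shift, so as to stay within the admissible range of \eqref{TT*loss} while still recovering, through the $\theta$-integration, the full $L^1_T H^1$-norm of the cubic source $g$.
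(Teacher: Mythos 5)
Your proof is correct, but it takes a genuinely different route from the paper's at the decisive step. The paper proves \eqref{Linftypq} by a bootstrap: it fixes a small $T_{1}$ depending only on $M_{T}$, propagates the discrete norm $\sup_{s}\|u^K(t_{k}+s)\|_{l^p_{\tau,n}W^{\sigma,q}}$ by induction on $n$, bounds the cubic Duhamel contribution by $\|u^K\|_{L^\infty_{T}H^1}\,\|u^K(t_{k}+\tilde s)\|^2_{l^2_{\tau,n}L^\infty}$, and closes the loop by estimating this discrete $l^2_{\tau}L^\infty$ norm through the embedding $W^{\sigma,q}\subset L^\infty$ (this is where $\sigma q>d$ enters) and discrete H\"older in time, which produces the small factor $T_{1}^{1-2/p}$; the argument is then iterated over $O(T/T_{1})$ intervals. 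You never form a discrete-in-time norm of the solution at all: after Minkowski and the application of \eqref{TT*loss} at fixed offset $\theta$, your key observation is that
$$
\frac1\tau\int_{0}^\tau \tau\sum_{j}\|g(t_{j}+\theta)\|_{H^{1}}\,\mathrm{d}\theta
=\sum_{j}\int_{t_{j}}^{t_{j+1}}\|g(s')\|_{H^{1}}\,\mathrm{d}s'
=\|g\|_{L^1_{T}H^1},
$$
so the cell sums recombine exactly into a continuous norm, which is then bounded once and for all by $\|u^K\|_{L^\infty_{T}H^1}\|u^K\|^2_{L^2_{T}L^\infty}\leq C_{T}$ via the continuous Strichartz bounds of Proposition \ref{propNLSK} and \eqref{borne2}. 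This buys a shorter proof with no induction, no smallness of $T_{1}$, and no use of the condition $\sigma q>d$ (which in your argument is only needed downstream, to deduce \eqref{uKinfty} by Sobolev embedding); the price is that you lean on the full uniform-in-$K$ Strichartz control $\|u^K\|_{L^{p_{0}}_{T}W^{1,q_{0}}}\leq C_{T}$ with $q_{0}>d$, whereas the paper's bootstrap consumes only the uniform bound $\|u^K\|_{L^\infty_{T}H^1}\leq M_{T}$ as analytic input. Since Proposition \ref{propNLSK} is stated and used elsewhere in the paper (e.g., in \eqref{borne2} and \eqref{TuuuSup2}), your reliance on it is legitimate. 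Two points you should make explicit when writing this up: (i) $\Pi_{K}$ is bounded on $L^\infty$ uniformly in $K$ (convolution with the dilated Schwartz kernel $\mathcal F^{-1}(\chi^2)$), which you need to replace $\|\Pi_{K}u^K\|_{L^2_{T}L^\infty}$ by $\|u^K\|_{L^2_{T}L^\infty}$; (ii) in the application of \eqref{TT*loss}, the shift $\delta=(s+\hat s-\theta)/\tau$ is a fixed constant in $[-3,3]$ for each fixed $(s,\hat s,\theta)$, and the constant in \eqref{TT*loss} is uniform over such shifts, so the final bound survives the supremum over $s$.
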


The crucial consequence of \black this proposition \black is that, under the above assumptions and in the particular case when $\hat s=0$, we get by Sobolev embedding that
\beq
\label{uKinfty}
\sup_{s \in [0, \tau]} \|u^K(t_{k}+s)\|_{l^2_{\tau, \lfloor N-{ s\over \tau}\rfloor}L^\infty} \leq C_{T}.
\eeq
\black In particular, this implies that \black
$$
\|u^K(t_{k})\|_{l^p_{\tau, N}L^\infty} \leq C_{T}
$$
for $p>2$ such that $(p,q,\sigma)$ verifies {\bf (H)}. In dimensions $1$ and $2$, there is no restriction on $\alpha$. In dimension $3$, this only requires that $\alpha <2$.

Note that another useful consequence of \eqref{Linftypq} is that, though $\varphi_{1}(i \tau \Delta)$ is not continuous on $L^q$ for $q \neq 2$ with uniform estimate with respect to $\tau,$ we have \black the following bound. \black

\begin{cor}
\label{corfilter}
For $(p,q,\sigma)$ verifying {\bf (H)}, we also obtain that
\beq
\label{filterestimate}
\|\varphi_{1}(2i \tau \Delta) u^K(t_{k})\|_{l^p_{\tau, N}W^{\sigma, q}} \leq C_{T}.
\eeq
\end{cor}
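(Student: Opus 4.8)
The plan is to exploit the integral representation of the $\varphi_{1}$-function, which turns the badly-behaved filter into an average of free Schr\"odinger propagators over a short time window, each of which is already controlled by Proposition \ref{propuKinfty}. Recall that
$$
\varphi_{1}(z) = \frac{e^{z}-1}{z} = \int_{0}^{1} e^{\theta z}\,\ddo\theta,
$$
so that applying the functional calculus to the operator $\Delta$ gives the representation
$$
\varphi_{1}(2i\tau\Delta)\, u^K(t_{k}) = \int_{0}^{1} e^{2i\tau\theta\Delta}\, u^K(t_{k})\,\ddo\theta .
$$
The whole point of this rewriting is that, although $\varphi_{1}(2i\tau\Delta)$ is not bounded on $L^q$ uniformly in $\tau$ for $q\neq 2$, each individual operator $e^{2i\tau\theta\Delta}$ is a free Schr\"odinger group of exactly the type handled in Proposition~\ref{propuKinfty}.

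Next I would take the $l^p_{\tau,N}W^{\sigma,q}$ norm on both sides and move it inside the $\theta$-integral by Minkowski's integral inequality, obtaining
$$
\|\varphi_{1}(2i\tau\Delta)\, u^K(t_{k})\|_{l^p_{\tau,N}W^{\sigma,q}} \leq \int_{0}^{1} \big\| e^{2i\tau\theta\Delta}\, u^K(t_{k})\big\|_{l^p_{\tau,N}W^{\sigma,q}}\,\ddo\theta .
$$
For each fixed $\theta\in[0,1]$ the shift $\hat s = 2\tau\theta$ lies in $[0,2\tau]\subset[-2\tau,2\tau]$, so I can apply Proposition~\ref{propuKinfty} with this value of $\hat s$ and with $s=0$ (for which $\lfloor N - s/\tau\rfloor = N$). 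This bounds the integrand by $C_{T}$ uniformly in $\theta$, and integrating over $\theta\in[0,1]$ yields the claimed estimate \eqref{filterestimate}.

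The argument is essentially immediate once Proposition~\ref{propuKinfty} is available; the only structural point requiring care is the observation that the uniform bound of that proposition was stated precisely for shifts $\hat s$ ranging over $[-2\tau,2\tau]$, which is exactly the range produced by the averaging representation of $\varphi_{1}$. In this sense the extra shift parameter $s\tau$ that was deliberately built into the discrete Strichartz estimate \eqref{TT*}, and then propagated into Proposition~\ref{propuKinfty}, is what makes this corollary come for free. I therefore expect no serious obstacle: the only genuine idea is to recognize $\varphi_{1}(2i\tau\Delta)$ as such a short-time average of the group $e^{i\hat s\Delta}$ rather than attempting to estimate the filter directly on $W^{\sigma,q}$, and everything else reduces to a uniform-in-$\theta$ application of the already-established estimate.
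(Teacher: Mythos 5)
Your proof is correct, but it takes a genuinely different route from the paper. You use the representation $\varphi_{1}(z)=\int_{0}^{1}e^{\theta z}\,\ddo\theta$, so that $\varphi_{1}(2i\tau\Delta)u^K(t_{k})=\int_{0}^{1}e^{2i\tau\theta\Delta}u^K(t_{k})\,\ddo\theta$ is an average of shifted free flows with shifts $\hat s=2\tau\theta\in[0,2\tau]$, and then Minkowski's integral inequality reduces everything to the uniform-in-$\hat s$ bound of Proposition \ref{propuKinfty} with $s=0$. The paper instead splits $u^K(t_{k})=\Pi_{\tau^{-1/2}}u^K(t_{k})+(1-\Pi_{\tau^{-1/2}})u^K(t_{k})$: on low frequencies it invokes the uniform $L^q$-boundedness of the multiplier $\varphi_{1}(2i\tau\Delta)\Pi_{\tau^{-1/2}}$ (Lemma \ref{lemfilter1}, via a Schwartz-kernel scaling argument), while on high frequencies it writes $\varphi_{1}(2i\tau\Delta)(1-\Pi_{\tau^{-1/2}})=\frac{1-\Pi_{\tau^{-1/2}}}{2i\tau\Delta}\bigl(e^{2i\tau\Delta}-1\bigr)$, uses H\"ormander--Mikhlin for the uniform $L^q$-boundedness of $\frac{1-\Pi_{\tau^{-1/2}}}{2i\tau\Delta}$, and then applies Proposition \ref{propuKinfty} only at the two shifts $\hat s=0$ and $\hat s=2\tau$. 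Both arguments hinge on the same key ingredient --- the shift parameter deliberately built into Proposition \ref{propuKinfty} --- but your averaging argument is shorter, avoids the frequency splitting and the multiplier lemma altogether, and makes transparent why the range $\hat s\in[-2\tau,2\tau]$ is exactly what is needed; the paper's route has the side benefit that Lemma \ref{lemfilter1} is established anyway and reused elsewhere (e.g.\ \eqref{estfilter3} in the global error analysis). Two minor points you should make explicit: the sign convention (the symbol of $\varphi_{1}(2i\tau\Delta)$ is $\varphi_{1}(-2i\tau|\xi|^2)=\int_0^1 e^{-2i\tau\theta|\xi|^2}\ddo\theta$, so the shifts indeed lie in $[0,2\tau]$; with the scheme's $\varphi_{1}(-2i\tau\Delta)$ they would lie in $[-2\tau,0]$, still within range), and the justification of the vector-valued Minkowski step, which is immediate here since $u^K(t_{k})$ is band-limited.
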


We will start with the proof of Proposition \ref{propuKinfty}.

\begin{proof}[Proof of Proposition \ref{propuKinfty}]
We first prove the estimate \eqref{Linftypq} for $\hat s=0$.

We use Duhamel's formula to get that for every $0 \leq n \leq N$ and $s \in [0, \tau]$,
$$
u^K(t_{n}+s)= e^{i (t_{n} +s)  \Delta} \Pi_{K}u_{0}  - i   \int_{0}^{t_{n}+s} e^{i(t_{n}+s- s_{1})\Delta} \Pi_{K} (|\Pi_{K}u^K|^2 \Pi_{K}u^K)(s_{1}) \dd s_{1}
$$
that we rewrite as
\begin{multline*}
u^K(t_{n}+s)= S_{K}(t_n+s)u_{0} - i S_{K}(t_n+s) \sum_{k=0}^{n-1} \int_{0}^\tau e^{-i (t_k+\tilde s) \Delta }  (|\Pi_{K}u^K|^2 \Pi_{K}u^K)(t_k+\tilde s) \dd \tilde s \\
- i \int_{0}^s \black e^{-i (\tilde s - s) \Delta } \Pi_{K} \black (|\Pi_{K}u^K|^2 \Pi_{K}u^K)(t_n + \tilde s)\dd \tilde s.
\end{multline*}
Therefore,
\begin{multline}
\label{expand0}
u^K(t_{n}+s)= S_{K}(t_n) e^{is \Delta}u_{0} - i  \sum_{k=0}^{n-1} \int_{0}^\tau S_{K}(t_{n-k} + s - \tilde s ) (|\Pi_{K}u^K|^2 \Pi_{K}u^K)(t_k + \tilde s)\dd \tilde s\\
- i   \int_{0}^s  S_{K}(s - \tilde s )(|\Pi_{K}u^K|^2 \Pi_{K}u^K)(t_n + \tilde s)\dd \tilde s.
\end{multline}

Let us fix \black $M_{T}$ \black such that
\beq
\label{hypdepart}
\|u^K\|_{L^\infty_{T}H^1} + \|u^K\|_{L^p_{T}W^{\sigma, q}} \leq M_{T}
\eeq
and define $N_{1}$, $T_{1}=N_{1} \tau \leq T- \tau$. We shall first prove that we can find $T_{1}$ sufficiently small depending only on $M_{T}$ such that
\beq
\label{hyperc}
\sup_{s \in [0, \tau]} \|u^K(t_{k}+s)\|_{l^p_{\tau, N_{1}}W^{\sigma, q}} \leq  R M_{T}
\eeq
for some $R>0$ well-chosen.

Let us first observe that  by elliptic regularity, for $q\in (1,\infty)$, we have
$$ \|u^K(t_{k}+s)\|_{l^p_{\tau, 0}W^{\sigma, q}} \lesssim   \|(I- \Delta)^{\sigma \over 2} u^K(t_{k}+s)\|_{l^p_{\tau, 0}L^{q}},$$
therefore, since $\Pi_{2K}u^K= u^K$, 
we can use the  modified Sobolev estimate \eqref{sobmod} to get \black % some more details ???
\beq
\label{unpas} \|u^K(t_{k}+s)\|_{l^p_{\tau, 0}W^{\sigma, q}} \lesssim   \|(I- \Delta)^{\sigma \over 2} u^K(t_{k}+s)\|_{l^p_{\tau, 0}L^{q}}
 \leq \black C_{0} \|u^K\|_{L^\infty_\tau H^{\sigma_1}} \black \leq C_{0} M_{T},
\eeq
where $\sigma_{1}= \sigma+ {2 \over p} (1 - {1 \over \alpha}) \leq 1$. We shall thus take $R=2C_{0}$ in \eqref{hyperc}. Next, for $n+1 \leq N_{1}$, assuming that $ \sup_{s \in [0, \tau]} \|u^K(t_{k}+s)\|_{l^p_{\tau, n}W^{\sigma, q}} \leq  2C_{0} M_{T}$, we get  by using \eqref{expand0} and the Strichartz estimate of Corollary \ref{corDSE} that
\begin{multline}
\label{uKgridinfty1}
\|u^K(t_{k}+s)\|_{l^p_{\tau, n+1}W^{\sigma, q}} \leq   C_{0} \|u_{0}\|_{H^1} + {C \over \tau} \int_{0}^\tau \| |\Pi_{K}u^K|^2 \Pi_{K} u^K (t_k + \tilde s) \|_{l^1_{\tau,n} H^{\sigma_{1}}} \dd \tilde s \\
+ { C \over \tau} \int_{0}^\tau \| |\Pi_{K}u^K|^2 \Pi_{K} u^K (t_k + \tilde s) \|_{l^1_{\tau,n+1} H^{\sigma_{1}}} \dd \tilde s.
\end{multline}
Next, we can use that
$$
\||\Pi_{K}u^K|^2 \Pi_{K} u^K (t_k + \tilde s) \|_{l^1_{\tau,n} H^{\sigma_{1}}} \leq  \| |\Pi_{K}u^K|^2 \Pi_{K} u^K (t_k + \tilde s) \|_{l^1_{\tau,n} H^{1}}
\leq \|u^{K}\|_{L^\infty_{T}H^1}  \|u^K(t_k + \tilde s)\|^2_{l^2_{\tau, n}L^\infty}.
$$
Since by the Sobolev and H\"older inequalities, we have
$$
\|u^K(t_k + \tilde s)\|^2_{l^2_{\tau, n}L^\infty}\lesssim  T_{1}^{ 1- {2 \over p}}  \black \|u^K(t_{k}+\tilde s)\|_{l^p_{\tau, n}W^{\sigma, q}}^2, \black
$$%
we get  from the induction assumption that
$$
\||\Pi_{K}u^K|^2 \Pi_{K} u^K (\tilde s + t_{k}) \|_{l^1_{\tau,n} H^{\sigma_{1}}} \leq T_{1}^{ 1- {2 \over p}} M_{T} (2C_{0} M_{T})^2.
$$
In a similar way, we also obtain that
$$
\| |\Pi_{K}u^K|^2 \Pi_{K} u^K (t_k + \tilde s) \|_{l^1_{\tau,n+1} H^{\sigma_{1}}} \leq T_{1}^{ 1- {2 \over p}} M_T \black \|u^K(t_k + \tilde s)\|_{l^p_{\tau, n+1}W^{\sigma, q}}^2, \black
$$%
and we use that
$$
\|u^K(t_{k}+s)\|_{l^p_{\tau, n+1}W^{\sigma, q}} \leq   \|u^K(t_{k}+s)\|_{l^p_{\tau, n}W^{\sigma, q}} + \tau^{1 \over p}  \|u^K(t_{n+1}+s)\|_{W^{\sigma, q}},
$$
which gives from the modified Sobolev embedding \eqref{sobmod}
$$
\|u^K(t_{k}+s)\|_{l^p_{\tau, n+1}W^{\sigma, q}} \leq   \|u^K(t_{k}+s)\|_{l^p_{\tau, n}W^{\sigma, q}} +  M_{T}.
$$
Consequently, by plugging these estimates into \eqref{uKgridinfty1}, we obtain that
$$
\|u^K(t_{k}+s)\|_{l^p_{\tau, n+1}W^{\sigma, q}} \leq   \black C_{0} M_{T}+ C T_{1}^{1 - {1 \over p}} M_{T}^3 + 8 C T_{1}^{1 - {1 \over p}}C_{0}^2 M_{T}^3. \black
$$%
This yields
$$
\sup_{s \in [0,\tau]}\|u^K(t_{k}+s)\|_{l^p_{\tau, n+1}W^{\sigma, q}} \leq  2  C_{0} M_{T}
$$
by choosing $T_{1}$ sufficiently small (note that $T_{1}$ depends only on $M_{T})$. This allows one to get by induction that
$$
\sup_{s \in [0, \tau]}  \|u^K(t_{k}+s)\|_{l^p_{\tau, N_{1}}W^{\sigma, q}} \leq   2C_{0} M_{T}.
$$
Since $T_{1}$ only depends on $M_{T}$, we can iterate the argument on  $[T_{1}, 2T_{1}]$, ... to finally get 
$$ \sup_{s \in [0, \tau]}  \|u^K(t_{k}+s)\|_{l^p_{\tau, N-1}W^{\sigma, q}} \leq   C_{T}.$$
Note that this also yields
 $$  \|u^K(t_{k})\|_{l^p_{\tau, N}W^{\sigma, q}} \leq   C_{T}.$$
  Indeed, we have that 
  $$  \|u^K(t_{k})\|_{l^p_{\tau, N}W^{\sigma, q}} \lesssim    \|u^K(t_{k})\|_{l^p_{\tau, N-1}W^{\sigma, q}} +  \tau^{ 1 \over p}
   \|u^K(t_{n}) \|_{W^{\sigma, q}} \leq C_{T}$$
  since  by using the same estimates as in  \eqref{unpas}, we have
$$  \tau^{ 1 \over p} \|u^K(t_{n})\|_{W^{\sigma, q}} \lesssim 
  \black  \|u^K\|_{L^\infty_\tau H^{\sigma_1}} \black \leq M_{T}.$$
  This proves \eqref{Linftypq} in the case $\hat s = 0.$

To get the estimate in the general case, we apply  $e^{i \hat s \Delta}$ to \eqref{expand0} to get
\begin{multline*}
e^{i \hat s \Delta} u^K(t_{n}+s)= S_{K}(t_n) e^{i(s+ \hat s)  \Delta}u_{0} - i  \sum_{k=0}^{n-1} \int_{0}^\tau  S_{K}(t_{n-k} +s + \hat s  - \tilde s ) (|\Pi_{K}u^K|^2 \Pi_{K}u^K)(t_k + \tilde s )\dd \tilde s \\
- i \int_{0}^s  S_{K}(s + \hat s- \tilde s )(|\Pi_{K}u^K|^2 \Pi_{K}u^K)(t_n + \tilde s )\dd \tilde s.
\end{multline*}
From the same use of the Strichartz estimates of Corollary \ref{corDSE}  as above, we obtain that
$$
\|e^{i \hat s \Delta }u^K(t_{k}+s)\|_{l^p_{\tau, N-1}W^{\sigma, q}} \leq C_{0}\|u_{0}\|_{H^1} +  M_{T} T^{1 - {1 \over p}} \sup_{s \in [0, \tau]}  \|u^K(t_{k}+s)\|_{l^p_{\tau, N-1}W^{\sigma, q}}^2.
$$
Since we have already proved the estimate \eqref{Linftypq} for $\hat s = 0$, this proves the estimate in the general case.
 Note that we can use the same trick as above to get the estimate for 
$  \|e^{i \hat s \Delta }u^K(t_{k})\|_{l^p_{\tau, N}W^{\sigma, q}}$.
\end{proof}

It remains to prove \eqref{filterestimate}.

\begin{proof}[Proof of Corollary \ref{corfilter}]
We first note that we can decompose
$$
\varphi_{1}(2i \tau \Delta) u^K(t_{k}) =   \varphi_{1}(2i \tau \Delta) ( 1 - \Pi_{\tau^{-{1 \over 2}}})u^K(t_{k}) + \varphi_{1}(2i \tau \Delta)  \Pi_{\tau^{-{1 \over 2}}}u^K(t_{k}).
$$
\black By using Lemma \ref{lemfilter1}, we have that  the multiplier $ \varphi_{1}(2i \tau \Delta)  \Pi_{\tau^{-{1 \over 2}}} $ is continuous on $L^q$ for every $q$ with norm uniform in $\tau$. 
Therefore, we get from Proposition \ref{propuKinfty} that
$$
\| \varphi_{1}(2i \tau \Delta) { \Pi_{\tau^{-{1 \over 2}}}\black} u^K(t_{k})\|_{l^p_{\tau, N}W^{\sigma, q}} \leq C \|u^K(t_{k})\|_{l^p_{\tau, N}W^{\sigma, q}} \leq C_{T}.
$$
To estimate the remaining part, we just observe that
$$
\varphi_{1}(2i \tau \Delta) ( 1 - \Pi_{\tau^{-{1 \over 2}}})u^K(t_{k}) = {1 - \Pi_{\tau^{-{1 \over 2}}} \over 2 i \tau \Delta }   e^{2 i\tau \Delta} u^K(t_{k})  -  {1 - \Pi_{\tau^{-{1 \over 2}}} \over 2 i \tau \Delta } u^K(t_{k}).
$$
\black Again, the multiplier  ${1 - \Pi_{\tau^{-{1 \over 2}}} \over 2 i \tau \Delta } $ is continuous on $L^q$ for every $q$ with norm uniform in $\tau.$, see \eqref{estfilter2} in Lemma \ref{lemfilter1}.\black Therefore, we obtain that
$$
\|\varphi_{1}(2i \tau \Delta) ( 1 - \Pi_{\tau^{-{1 \over 2}}})u^K(t_{k})\|_{l^p_{\tau, N}W^{\sigma, q}}\leq C\left(  \|e^{2 i\tau \Delta} u^K(t_{k})\|_{l^p_{\tau, N}W^{\sigma, q}} + \|u^K(t_{k})\|_{l^p_{\tau, N}W^{\sigma, q}}   \right)
$$
and the result follows by using again Proposition \ref{propuKinfty}.
\end{proof}

\begin{proposition}
\label{uKgrid}
For every $T \geq 0$,  $u_{0}\in H^1$ and for every $(p,q)$ admissible with  $p>2$, there exists $C_{T}>0$  such that for every $K$, $\tau$ as in Proposition \ref{propuKinfty}, with $\alpha <2$  in dimension 3, we have uniformly in \black $s \in [-2\tau, 2\tau]$ \black the estimate
\begin{align}
\label{nablauKgrid}
& \| e^{is \Delta} \nabla u^K(t_{k}) \|_{l^p_{\tau, N}L^q} \leq C_{T}  (K \tau^{1 \over 2})^{ 2 \over p}.
\end{align}
\end{proposition}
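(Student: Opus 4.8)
The plan is to run the same Duhamel machinery as in the proof of Proposition~\ref{propuKinfty}, but now applied to $e^{is\Delta}\nabla u^K(t_k)$ and measured in $l^p_{\tau,N}L^q$. Concretely, I would take the expansion \eqref{expand0} at the grid points (within-step offset zero, so the last integral drops out) and apply $e^{is\Delta}\nabla$. Since $e^{is\Delta}$, $\nabla$ and $\Pi_K$ are Fourier multipliers they commute with $S_K(t)=e^{it\Delta}\Pi_K$, giving
\begin{equation*}
e^{is\Delta}\nabla u^K(t_n) = S_K(t_n)\bigl(e^{is\Delta}\nabla u_0\bigr) - i\sum_{k=0}^{n-1}\int_0^\tau S_K(t_{n-k}+s-\tilde s)\,\nabla\bigl(|\Pi_K u^K|^2\Pi_K u^K\bigr)(t_k+\tilde s)\dd\tilde s .
\end{equation*}
For the homogeneous term I would simply apply the discrete Strichartz estimate \eqref{T}, bounding the finite $l^p_{\tau,N}$ sum by the full $l^p_\tau$ norm and using $\|e^{is\Delta}\nabla u_0\|_{L^2}=\|\nabla u_0\|_{L^2}\le\|u_0\|_{H^1}$; this already contributes exactly $C(K\tau^{1/2})^{2/p}\|u_0\|_{H^1}$.

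The heart of the matter is the inhomogeneous term, and the whole proof hinges on how it is estimated. First I would pull the $\tilde s$-integral outside the $l^p_{\tau,N}L^q$ norm by Minkowski's integral inequality. For each fixed $\tilde s\in[0,\tau]$ the remaining object is a retarded sum $\sum_{k=0}^{n-1}S_K\bigl((n-k+\bar s)\tau\bigr)F_k(\tilde s)$ with shift $\bar s=(s-\tilde s)/\tau\in[-3,2]\subset[-8,8]$ (here the admissibility of shifts in the estimates is essential) and $F_k(\tilde s)=\nabla(|\Pi_K u^K|^2\Pi_K u^K)(t_k+\tilde s)$. The decisive choice is to bound this sum \emph{not} by the $K$-lossy inhomogeneous estimate \eqref{TT*}, which would produce a surplus factor $(K\tau^{1/2})^{2/p_2}$ and overshoot the claimed rate, but by the loss-of-derivatives estimate \eqref{TT*loss} of Corollary~\ref{corDSE} (with the single pair $(p,q)$), which carries no $K$-factor at the price of the norm $\|F(\tilde s)\|_{l^1_\tau H^{\sigma_1}}$ with $\sigma_1=\frac2p(1-\frac1\alpha)$. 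After this step the factor $\tau$ in the $l^1_\tau$ norm cancels against the $\tau^{-1}$ from writing $\sum_k$ as $\tau^{-1}(\tau\sum_k)$, and the leftover $\int_0^\tau\dd\tilde s$ leaves a bound proportional to $\int_0^\tau\sum_k\|F_k(\tilde s)\|_{H^{\sigma_1}}\dd\tilde s$.

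To close it I would use the two special features of the filtered problem. By Remark~\ref{remlocal}, $u^K=\Pi_{2K}u^K$ is frequency-localized at scale $K$, hence the cubic expression $\nabla(|\Pi_K u^K|^2\Pi_K u^K)$ has Fourier support in $\{|\xi|\lesssim K\}$; Bernstein's inequality then trades $H^{\sigma_1}$ for $K^{\sigma_1}$ times the $L^2$ norm, and the exact arithmetic $K^{\sigma_1}=\tau^{-\frac{\alpha}{2}\cdot\frac2p(1-\frac1\alpha)}=\tau^{(1-\alpha)/p}=(K\tau^{1/2})^{2/p}$ reproduces precisely the target loss factor. The remaining $\sum_k\|F_k(\tilde s)\|_{L^2}$ I would control by a cubic Hölder estimate, placing $\nabla\Pi_K u^K=\Pi_K\nabla u^K$ in $L^\infty_TL^2$ (bounded by $\|u^K\|_{L^\infty_TH^1}\le M_T$ via Proposition~\ref{propNLSK}) and the two factors $\Pi_K u^K$ in $l^2_\tau L^\infty$, which is exactly the uniform control supplied by \eqref{uKinfty} in Proposition~\ref{propuKinfty} (this is where the restriction $\alpha<2$ in dimension $3$ enters). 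Collecting the constants and cancelling the remaining $\tau$ against $\int_0^\tau\dd\tilde s$ yields $C_T(K\tau^{1/2})^{2/p}$, uniformly in $s\in[-2\tau,2\tau]$ since no constant depends on $s$.

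The main obstacle I anticipate is purely the bookkeeping of the loss factor: one must resist the natural reflex of estimating the nonlinear term with \eqref{TT*} and instead route it through \eqref{TT*loss} together with Bernstein, because only then does all the $K$-dependence concentrate into $K^{\sigma_1}$, which matches $(K\tau^{1/2})^{2/p}$ exactly rather than exceeding it. Everything else — the Minkowski step, the cancellation of the $\tau$-factors, and the cubic Hölder estimate fed by Proposition~\ref{propNLSK} and \eqref{uKinfty} — is routine.
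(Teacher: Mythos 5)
Your proof is correct and follows the paper's skeleton almost step for step: apply $e^{is\Delta}\nabla$ to the Duhamel expansion \eqref{expand0}, treat the free term with \eqref{T}, pull the $\tilde s$-integral out of the mixed norm, estimate the retarded sum by a discrete Strichartz estimate, and close with the cubic H\"older bound $\|\nabla(|\Pi_K u^K|^2\Pi_K u^K)\|_{L^2}\lesssim \|\nabla u^K\|_{L^2}\|\Pi_K u^K\|_{L^\infty}^2$ combined with \eqref{uKinfty} (which is exactly where the restriction $\alpha<2$ in dimension $3$ enters). The only genuine difference is how the retarded sum is estimated. The paper applies \eqref{TT*} of Theorem \ref{theoDSE} with the dual pair $(p_2,q_2)=(\infty,2)$: this pair is admissible and satisfies $p_2=\infty>2$, so the loss is $(K\tau^{1/2})^{2/p+2/p_2}=(K\tau^{1/2})^{2/p}$ and the data norm is exactly $l^1_\tau L^2$; no Bernstein argument and no recourse to Corollary \ref{corDSE} are needed. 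You instead invoke \eqref{TT*loss} (uniform constant, loss of ${2\over p}(1-{1\over \alpha})$ derivatives) and then exploit the frequency localization of the cubic term together with Bernstein to convert the derivative loss into the factor $K^{{2\over p}(1-{1\over\alpha})}=(K\tau^{1/2})^{2/p}$; your arithmetic is right, the shift $(s-\tilde s)/\tau\in[-3,2]$ is within the allowed range, and the remaining bookkeeping closes. Since Corollary \ref{corDSE} is itself deduced from Theorem \ref{theoDSE} by Littlewood--Paley and Bernstein, your route essentially unwinds that derivation on a frequency-localized function: equivalent in substance, just one lemma longer. Do note, however, that your stated reason for avoiding \eqref{TT*} --- that it ``would produce a surplus factor $(K\tau^{1/2})^{2/p_2}$ and overshoot the claimed rate'' --- is a misreading: nothing forces $p_2<\infty$, and choosing $(p_2,q_2)=(\infty,2)$ makes that factor equal to $1$. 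That is precisely what the paper does, so the ``decisive choice'' you describe is not forced; fortunately this error lives only in your motivation, not in the argument you actually run.
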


Note that the above proposition gives in particular an estimate of $\| \nabla u^K(t_{k})\|_{l^p_{\tau, N}L^q}$ in the special case $s=0$.

\begin{proof}
% The first estimate is only a consequence of the Sobolev embedding for $d \leq 3$. Indeed, we have
% $$  \|e^{is \Delta}u^K(t_{k}) \|_{l^p_{\tau, N}L^q} \leq T^{ 1 \over p} \| e^{is \Delta}u^K(t_{k})\|_{l^\infty_{\tau, N}H^1}
%  \leq  T^{ 1 \over p} \|u^K \|_{L^\infty_{T}H^1}$$
%  and the estimate follows from Proposition \ref{propNLSK}.
%
%  For the second estimate,
By using again \eqref{expand0}, we write
\begin{multline}
\label{expand1}
e^{i s \Delta } \nabla u_{K}(t_{n}) =S_{K}(n \tau)(e^{i s \Delta } \nabla u_{0})\\
- i  \int_{0}^\tau \sum_{k=0}^{n-1} S_{K}(t_{n-k} +s - \tilde s) \nabla \bigl(|\Pi_{K}u^K|^2 \Pi_{K}u^K\bigr)(t_k+\tilde s)\dd \tilde s.\qquad
\end{multline}
We can then  use Theorem \ref{theoDSE} (note that $(s- \tilde s)/\tau$ is uniformly bounded in \black $[-3, 3]$) \black to get
\begin{multline*}
\|e^{i s \Delta } \nabla u_{K}(t_{n})\|_{l^p_{\tau, N}, L^q} \leq C \Bigl( (K \tau^{1 \over 2})^{2 \over p} \|e^{i s \Delta } \nabla u_{0}\|_{L^2} \Bigr. \\
\Bigl.+ (K \tau^{1 \over 2})^{2 \over p} {1 \over \tau}  \  \int_{0}^\tau \sup_{\tilde s \in [0,\tau]} \left\| \nabla \bigl(|\Pi_{K}u^K|^2 \Pi_{K}u^K\bigr)(t_k+\tilde s) \right\|_{l^1_{\tau, N} L^2} \dd\tilde s\Bigr).
\end{multline*}
To estimate the last term in the above estimate, we use that \black
\begin{align*}
\sup_{\tilde{s} \in [0,\tau]} &\left\| \nabla \left(|\Pi_{K}u^K|^2 \Pi_{K}u^K\right)(t_{k}+ \tilde s) \right\|_{l^1_{\tau, N} L^2 }  \\
& \le \sup_{\tilde{s} \in [0,\tau]}  \left \| \|\nabla u^K(t_{k}+ \tilde s) \|_{L^2} \|\Pi_{K}u^K(t_{k}+ \tilde s) \|_{L^\infty}^2 \right\|_{l^1_{\tau, N}} \\
& \leq  \|\nabla u^K \|_{L^\infty_{T}L^2}  \sup_{\tilde s \in [0, \tau]}\|\Pi_{K} u^K(t_{k}+ \tilde s)\|_{l^2_{\tau, N}L^\infty}^2.
\end{align*} \black
To conclude, we can use the estimate \eqref{uKinfty} which holds even in dimension $3$ with the assumption that $\alpha <2$ by using Remark \ref{remcond} and Proposition \ref{propuKinfty}.
\end{proof}

\section{Local error analysis}
\label{sectionlocal1}

We shall now study the time discretization \eqref{scheme} of \eqref{NLSK}. By using Duhamel's formula, we get that
\begin{equation}\label{duh1}
u^K(t_n+\tau) = e^{i \tau \Delta} u^K(t_n) - i e^{i \tau \Delta} \black \Pi_{K}T(\Pi_k u^K)(\tau, t_n), \black
\end{equation}
where
\begin{align}
T(\Pi_K u^K)(\tau,t_n) =  \int_0^\tau e^{-i s \Delta} \Big[\vert \Pi_K u^K (t_n+s)\vert^2 \Pi_K u^K(t_n+s)\Big] \ddo s.
\end{align}
Iterating Duhamel's formula \eqref{duh1}, i.e., plugging the expansion
$$
u^K(t_n+s) =  e^{i s \Delta} u^K(t_n) - i e^{i s \Delta} \Pi_{K} T(\Pi_k u^K)(s,t_n)
$$
(which follows by replacing $\tau$ with $s$ in \eqref{duh1}) into \eqref{duh1}, furthermore yields that
\begin{equation}\label{duhit}
\begin{aligned}
u^K(t_n+\tau) & = e^{i \tau \Delta} u^K(t_n) \\
& \qquad - i e^{i \tau\Delta} \Pi_k\int_0^\tau \e^{- i s \Delta} \Big[\Big( e^{i s \Delta} \Pi_K u^K(t_n) - i e^{i s\Delta} \Pi_K T(\Pi_K u^K) (s,t_n)\Big)^2 \\
&\hskip3cm\cdot \Big( e^{-i s \Delta} \Pi_K \overline u^K(t_n) + i e^{-i s\Delta} \Pi_K \overline{T}(\Pi_K u^K)(s,t_n)\Big) \Big]\ddo s\\
& =  e^{i \tau \Delta} u^K(t_n) - i e^{i \tau \Delta} \Pi_K\int_0^\tau e^{-i s\Delta} \Big[ \left(e^{is\Delta} \Pi_K u^K(t_n)\right)^2 \left(e^{-i s\Delta}\Pi_K \overline u^K(t_n)\right) \Big]\mathrm{d}s \\
&\qquad + i e^{i \tau \Delta} \Pi_K \int_0^\tau  \black e^{-is\Delta} \Big[ T_1+T_2 + T_3 + T_4 + T_5\Big](s,t_n)\dd s \black
\end{aligned}
\end{equation}
with
\begin{equation}
\begin{aligned}\label{Tj}
& T_1(s,t_n) = \black -i \black \left(e^{is \Delta} \Pi_K u^K(t_n) \right)^2 e^{-is \Delta} \Pi_K \overline{T}(\Pi_K u^K)(s,t_n) \\
& T_2(s,t_n) = \black -2 \black \left( e^{is \Delta} \Pi_K u^K(t_n)\right) \left \vert e^{i s \Delta} \Pi_K T(\Pi_K u^K)(s,t_n)\right\vert^2 \\
& T_3(s,t_n) = \black i \black \left \vert e^{i s \Delta} \Pi_K T(\Pi_K u^K)(s,t_n)\right\vert^2 e^{i s \Delta} \Pi_K T(\Pi_K u^K)(s,t_n)\\
& \black T_4(s,t_n) = 2i \left|e^{is \Delta} \Pi_K u^K(t_n) \right|^2 e^{is \Delta} \Pi_K T(\Pi_K u^K)(s,t_n) \black \\
& \black T_5(s,t_n) = \left( e^{-is \Delta} \Pi_K \overline u^K(t_n)\right) \left( e^{i s \Delta} \Pi_K T(\Pi_K u^K)(s,t_n)\right)^2. \black
\end{aligned}
\end{equation}
In the following we set
\begin{equation}\label{err1}
E_1(u^K,\tau,t_n) = i  \int_0^\tau e^{-is\Delta}\Big[ T_1+T_2 + T_3 + T_4 + T_5\Big](s,t_n)\dd s
\end{equation}
such that by \eqref{duhit} we have that
\begin{equation}\label{duh2}
\begin{aligned}
u^K(t_n+\tau) = e^{i \tau \Delta} u^K(t_n) & - i e^{i \tau \Delta} \Pi_K \int_0^\tau e^{-i s \Delta} \Big[ \left(\e^{is\Delta} \Pi_K \black u^K(t_n)\right)^2 \left(e^{-i s\Delta}  \Pi_K \black \overline u^K(t_n)\right) \Big]\dd s \\
& + e^{i \tau \Delta}\Pi_K E_1(u^K,\tau,t_n).
\end{aligned}
\end{equation}
To compare the exact solution \eqref{duh2} with the numerical solution \eqref{scheme} we need the following Lemma.

\begin{lemma}\label{lem:locE3}
It holds that
\begin{multline}
e^{- i s \Delta}  \left( e^{i s \Delta} w\right)^2 \left( e^{-i s \Delta} \overline w\right) - w^2  \left( e^{-2 i s \Delta} \overline w\right) \\
= \blue - \black 2 i \int_0^s e^{-i s_1 \Delta} \left[ \nabla  \left( e^{i s_1 \Delta} w\right)^2 \nabla \left( e^{ i( s_1 -2s) \Delta } \overline w\right) + \left( \nabla e^{i s_1 \Delta} w\right)^2 \left( e^{ i( s_1 -2s) \Delta } \overline w\right) \right]\ddo {s_1},
\end{multline}
where we set $\nabla f \nabla g = \sum_{i = 1}^d (\partial_{i} f)(\partial_i g)$ and $(\nabla f)^2 = \nabla f \nabla f$.
\black
\end{lemma}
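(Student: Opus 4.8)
The plan is to recognize the left-hand side as the increment of a single one-parameter family and to apply the fundamental theorem of calculus in that parameter. Concretely, for $s$ fixed I would introduce
$$
G(s_1) = e^{-i s_1 \Delta}\left[\left(e^{i s_1 \Delta} w\right)^2 \left(e^{i(s_1 - 2s)\Delta}\overline w\right)\right], \qquad s_1 \in [0,s],
$$
and check directly that $G(s)$ is the first term on the left-hand side while $G(0)$ is the second term: at $s_1=s$ the factor $e^{i(s_1-2s)\Delta}$ becomes $e^{-is\Delta}$ and the outer group becomes $e^{-is\Delta}$, whereas at $s_1=0$ the factor $e^{i(s_1-2s)\Delta}$ becomes $e^{-2is\Delta}$ and the outer group becomes the identity. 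Thus the left-hand side equals $G(s)-G(0)=\int_0^s G'(s_1)\,\ddo s_1$, and the entire content of the lemma reduces to computing $G'$.

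Next I would differentiate $G$. Writing $A = e^{is_1\Delta}w$ and $B = e^{i(s_1-2s)\Delta}\overline w$, both satisfy $\partial_{s_1}A = i\Delta A$ and $\partial_{s_1}B = i\Delta B$, while $\partial_{s_1}e^{-is_1\Delta} = -i\Delta e^{-is_1\Delta}$. Since $\Delta$ commutes with $e^{-is_1\Delta}$, the Leibniz rule yields
$$
G'(s_1) = e^{-is_1\Delta}\left(-i\,\Delta\!\left(A^2 B\right) + i\left(2A(\Delta A)B + A^2 \Delta B\right)\right).
$$

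The key algebraic step is then to expand the Laplacian of the product,
$$
\Delta(A^2 B) = \Delta(A^2)\,B + 2\nabla(A^2)\cdot\nabla B + A^2 \Delta B, \qquad \Delta(A^2) = 2A\Delta A + 2(\nabla A)^2,
$$
and to observe that, after substitution, the two \emph{stiff} second-order contributions $2A(\Delta A)B$ and $A^2\Delta B$ cancel exactly against the terms produced by differentiating $A$ and $B$ separately. What survives are only the first-order cross terms,
$$
G'(s_1) = -2i\, e^{-is_1\Delta}\left[\nabla(A^2)\cdot\nabla B + (\nabla A)^2 B\right],
$$
which, recalling $A = e^{is_1\Delta}w$, $B = e^{i(s_1-2s)\Delta}\overline w$ and the conventions $\nabla f\nabla g = \sum_i(\partial_i f)(\partial_i g)$, $(\nabla f)^2=\nabla f\nabla f$, is precisely the integrand on the right-hand side. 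Integrating from $0$ to $s$ gives the claimed identity.

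The argument is elementary; the only genuine obstacle is the bookkeeping in the third step, namely verifying that the two second-order terms created by the Laplacian match, with opposite sign, the two terms coming from $\partial_{s_1}(A^2)$ and $\partial_{s_1}B$, so that only gradient products remain. This cancellation is exactly the mechanism behind the improved local error structure \eqref{localNew}: the leading $\Delta$-order contributions disappear and only products of first derivatives survive, which is why controlling the local error will require only one spatial derivative of $w$ rather than two. I would be careful with the Leibniz identity $\Delta(A^2) = 2A\Delta A + 2(\nabla A)^2$ and with the sign from $\partial_{s_1}e^{-is_1\Delta} = -i\Delta e^{-is_1\Delta}$, since these determine the final constant $-2i$.
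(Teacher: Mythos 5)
Your argument is correct. The function $G(s_1)$ does interpolate between the two terms on the left-hand side, and the Leibniz computation with $\partial_{s_1}A=i\Delta A$, $\partial_{s_1}B=i\Delta B$, $\partial_{s_1}e^{-is_1\Delta}=-i\Delta e^{-is_1\Delta}$, combined with the expansion $\Delta(A^2B)=2A(\Delta A)B+2(\nabla A)^2B+2\nabla(A^2)\cdot\nabla B+A^2\Delta B$, indeed leaves only $G'(s_1)=-2i\,e^{-is_1\Delta}\bigl[\nabla(A^2)\cdot\nabla B+(\nabla A)^2B\bigr]$ after the second-order terms cancel, so the identity follows by integration.

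The paper proves the lemma by a genuinely different (dual) route: entirely on the Fourier side. There, the Fourier transform of the right-hand side is written as an integral against $\delta_{\xi_1+\xi_2+\xi_3=\xi}\,\overline{\widehat w}(\xi_1)\widehat w(\xi_2)\widehat w(\xi_3)$, and the $s_1$-integrand $2i\bigl(\xi_2\cdot\xi_3-\xi_1\cdot(\xi_2+\xi_3)\bigr)e^{2is_1(\xi_2\cdot\xi_3-\xi_1\cdot(\xi_2+\xi_3))}$ is recognized as an exact derivative, so the integral collapses to boundary terms, which are the Fourier transform of the left-hand side. Your cancellation of the $\Delta$-terms is precisely the algebraic identity $(-\xi_1+\xi_2+\xi_3)^2-(\xi_1^2+\xi_2^2+\xi_3^2)=2\bigl(\xi_2\cdot\xi_3-\xi_1\cdot(\xi_2+\xi_3)\bigr)$ read in physical space. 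What your version buys is elementary self-containedness: no Fourier transform, only the product rule, and it shows transparently why only first derivatives survive, which is the mechanism behind the favourable local error structure \eqref{localNew}. What the paper's version buys is that the resonance phase is displayed explicitly, which is the viewpoint from which the scheme \eqref{scheme} and its filter $\varphi_1(-2i\tau\Delta)$ are designed (the dominant phase $e^{2is\xi_1^2}$ is the part integrated exactly, producing $\varphi_1$). For a fully rigorous write-up you should add one sentence justifying the differentiation and the pointwise Leibniz manipulations, e.g., by carrying out the computation for Schwartz $w$ and concluding by density; the same caveat applies implicitly to the paper's Fourier-space computation.
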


\begin{proof}
With the aid of the (inverse) Fourier  transform \black
$$
 w(x) = (2\pi)^{-d/2}\int_{\mathbb{R}^d} \widehat w (\xi) \, e^{i \langle x,\xi\rangle}\dd\xi \black
$$
we obtain with the notation $\xi_j \xi_\ell = \langle \xi_j, \xi_\ell\rangle$ \black that
\begin{equation}
\begin{aligned}
\mathcal{F}\Bigl( - \black 2 i \int_0^s & e^{-i s_1 \Delta} \left[ \nabla \left( e^{i s_1 \Delta} w\right)^2 \nabla \left( e^{ i( s_1 -2s) \Delta } \overline w\right) + \left( \nabla e^{i s_1 \Delta} w\right)^2 \left( e^{ i( s_1 -2s) \Delta } \overline w\right)\right]\dd {s_1}\Bigr)(\xi)\\
& =  2 i\, (2\pi)^{-3d/2} \int_{\xi_1,\xi_2,\xi_3} \delta_{\xi_1+\xi_2+\xi_3 = \xi}  \,\overline{\widehat w}(\xi_1) \widehat w (\xi_2) \widehat w(\xi_3)\, e^{ 2 i s \xi_1^2}  \times \black \\
&  \qquad\qquad \int_0^s \left( - \xi_1  (\xi_2+\xi_3)  + \xi_2 \xi_3\right) e^{i s_1 (-\xi_1 + \xi_2+\xi_3)^2} e^{- i s_1 (\xi_1^2+\xi_2^2+\xi_3^2)}\dd s_1 \black \\
&  =   (2\pi)^{-3d/2}  \int_{\xi_1,\xi_2,\xi_3} \delta_{\xi_1+\xi_2+\xi_3 = \xi}  \,\overline{\widehat w}(\xi_1) \widehat w (\xi_2) \widehat w(\xi_3)\, e^{ 2 i s \xi_1^2} \times \black \\
& \qquad\qquad\int_0^s 2 i  \left( - \xi_1  (\xi_2+\xi_3)  + \xi_2 \xi_3\right)e^{2 i s_1 (-\xi_1 (\xi_2+\xi_3) +  \xi_2 \xi_3) }\dd {s_1} \black \\
& = \mathcal{F}\Bigl( \mathrm{e}^{- i s_1\Delta}  \left(\mathrm{e}^{i s_1 \Delta} w\right)^2 \left(\mathrm{e}^{i (s_1-2s)\Delta  } \overline w\right)\Big \vert_{s_1 = 0}^s\Bigr)(\xi).
\end{aligned}
\end{equation}
This proves the desired relation. \black
\end{proof}

With the aid of the above lemma we \black get an alternative expression of the \black exact solution \eqref{duh2}.

\begin{cor}\label{cor:locE} The solution of \eqref{NLSK} can be expressed as follows
\begin{multline}\label{efinal}
u^K(t_{n+1}) =  e^{i \tau \Delta} u^K(t_{n})-\tau S_{K}(\tau) \left( \left(\Pi_K u^{K}(t_{n})\right)^2 \varphi_1(-2i \tau\Delta) \Pi_K \overline u^K(t_{n})\right)\\
+ i S_K(\tau)\left( E_{1}(u^K, \tau, t_{n}) + E_{2}(u^K, \tau, t_{n})\right),
\end{multline}
where \black $S_K= \Pi_K \mathrm{e}^{i \tau \Delta}$ \black is defined in \eqref{SK}, $E_1$ given in \eqref{err1} and $E_2$ reads
\begin{multline}
\label{err2}
E_2 (u^K, \tau, t_n)  = \black -2 \black \int_0^\tau \int_0^s  e^{-i s_1 \Delta} \Big[ \nabla \left(e^{i s_1 \Delta} \Pi_K u^K(t_n) \right)^2 \nabla \left(e^{ i( s_1 -2s) \Delta } \overline{\Pi_K u^K(t_n)} \ \right) \\
+ \left( \nabla e^{i s_1 \Delta}  \Pi_K u^K(t_n) \right)^2 \left(e^{ i( s_1 -2s) \Delta } \overline{\Pi_K u^K(t_n)} \right)
\Big]\mathrm{d}{s_1}\mathrm{d}s.
\end{multline}
\end{cor}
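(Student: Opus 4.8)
The plan is to start from the iterated Duhamel representation \eqref{duh2} and to convert its single remaining cubic integral into the stated $\varphi_1$-term plus the new remainder $E_2$, while leaving the already-isolated $E_1$-contribution (see \eqref{err1}) untouched. Concretely, the only term in \eqref{duh2} that still needs to be worked on is
$$
-i\,e^{i\tau\Delta}\Pi_K\int_0^\tau e^{-is\Delta}\Big[\big(e^{is\Delta}\Pi_K u^K(t_n)\big)^2\big(e^{-is\Delta}\Pi_K\overline u^K(t_n)\big)\Big]\dd s,
$$
since the $E_1$-part is already of the form $S_K(\tau)$ times a remainder. I would therefore focus entirely on rewriting this integral.

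The key step is to apply Lemma \ref{lem:locE3} with the frozen profile $w=\Pi_K u^K(t_n)$. That identity says exactly that the integrand $e^{-is\Delta}(e^{is\Delta}w)^2(e^{-is\Delta}\overline w)$ equals the frozen cubic term $w^2(e^{-2is\Delta}\overline w)$ up to a correction that is a single time integral of gradient-type terms. Integrating the identity over $s\in[0,\tau]$ and pulling the $s$-independent factor $w^2=(\Pi_K u^K(t_n))^2$ out of the integral, the frozen part produces
$$
(\Pi_K u^K(t_n))^2\int_0^\tau e^{-2is\Delta}\dd s\;\Pi_K\overline u^K(t_n).
$$
Here the central computational point is to recognize the operator identity $\int_0^\tau e^{-2is\Delta}\dd s=\tau\,\varphi_1(-2i\tau\Delta)$, which follows directly from $\varphi_1(z)=(e^z-1)/z$ evaluated at $z=-2i\tau\Delta$. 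This is precisely the $\varphi_1$-term in \eqref{efinal}, and, not coincidentally, the one hard-wired into the scheme \eqref{scheme}. Applying the outer $\int_0^\tau\dd s$ to the correction term of Lemma \ref{lem:locE3} turns it into the iterated double integral $\int_0^\tau\int_0^s e^{-is_1\Delta}[\cdots]\dd s_1\dd s$, which, up to the prefactor fixed by the lemma, is exactly $E_2$ as defined in \eqref{err2}.

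Finally, I would collect the pieces: substituting both contributions back into \eqref{duh2}, using $S_K(\tau)=\Pi_K e^{i\tau\Delta}=e^{i\tau\Delta}\Pi_K$ to factor out the filtered propagator, and combining the correction from the lemma with the previously isolated $E_1$-term to form $E_1+E_2$ under a common $S_K(\tau)$. The only genuine care needed is the bookkeeping of the imaginary prefactors generated along the way (the $-i$ in front of the integral, the constant multiplying the correction in Lemma \ref{lem:locE3}, and the $\tau$ extracted from the $\varphi_1$-identity), so that the constants in front of the $\varphi_1$-term and in front of $E_1+E_2$ come out exactly as stated in \eqref{efinal}. I do not expect any analytic obstacle at this stage: every step is an algebraic identity once Lemma \ref{lem:locE3} is available, and the substantive work — the frequency-space integration that trades the two oscillating factors $e^{\pm is\Delta}$ for the single frozen factor $e^{-2is\Delta}$ at the cost of the gradient remainder — has already been carried out in that lemma.
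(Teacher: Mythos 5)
Your proposal is correct and is exactly the paper's own proof, which consists of the single step you describe: apply Lemma \ref{lem:locE3} with $w=\Pi_K u^K(t_n)$ to the cubic integral in \eqref{duh2}, use $\int_0^\tau e^{-2is\Delta}\,\mathrm{d}s=\tau\,\varphi_1(-2i\tau\Delta)$ for the frozen part, and recognize the resulting double integral as $E_2$. One caveat on the bookkeeping you defer: carried out literally it yields
$-i\tau S_{K}(\tau)\left(\left(\Pi_K u^{K}(t_{n})\right)^2 \varphi_1(-2i \tau\Delta) \Pi_K \overline u^K(t_{n})\right)+S_K(\tau)\left(E_1+E_2\right)$,
so the unimodular prefactors $-\tau$ and $+i$ printed in \eqref{efinal} do not come out "exactly as stated" (the same dropped factor of $i$ appears in the rewriting of the scheme at the start of Section \ref{sectionlocal2}, so the error recursion \eqref{duhamelscheme} and all subsequent norm estimates are unaffected by this harmless discrepancy).
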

\begin{proof}
The corollary follows by applying Lemma \ref{lem:locE3} in the integral in \eqref{duh2}.
% together with the definition  $S_K(\tau) = \Pi_K \mathrm{e}^{i \tau \Delta}$ given in \eqref{SK}.
\end{proof}
%\begin{cor}[Local error]
%\label{corconsistancy}
%The difference between the exact solution $u^K$ of the truncated Schr\"odinger equation \eqref{NLSK} at time $t=t_{n+1}$ and the numerical scheme~\eqref{scheme} after one time step $\tau$  with initial value $u^K(t_n)$ can be expressed as follows
%\begin{equation}\label{duh3}
%u^K(t_n+\tau) - \Phi^\tau_K(u^K(t_n)) = i \Pi_{K} e^{i \tau \Delta} E_1(u^K,\tau,t_n) + i \Pi_{K} e^{i \tau \Delta} E_2( u^{K}, \tau, t_n),
%\end{equation}
%where
%$$ E_1(u^{K},\tau,t_n) =  \int_0^\tau \int_0^s \e^{- i s \Delta} \Big[ T_1+T_2 + T_3\Big]\mathrm{d}s
%$$
%\begin{multline}
%\label{duh3bis}
% E_2 (u^K, \tau, t_n)  =
%2    \int_0^\tau \int_0^s  \mathrm{e}^{-i s_1 \Delta} \Big[
% \nabla  \left(\mathrm{e}^{i s_1 \Delta} \Pi_K u(t_n) \right)^2 \nabla \left(\mathrm{e}^{ i( s_1 -2s) \Delta } \overline{\Pi_K u(t_n)} \ \right) \\+   \left( \nabla \mathrm{e}^{i s_1 \Delta}  \Pi_K u(t_n) \ \right)^2 \left(\mathrm{e}^{ i( s_1 -2s) \Delta } \overline{\Pi_K u(t_n)} \right)
%\Big]\mathrm{d}{s_1}\mathrm{d}s,
%\end{multline}
%with $E_1,E_2, E_3$ defined in \eqref{err1}, \eqref{err2} and \eqref{err3}, respectively.
%\end{cor}
%\begin{proof}
%The assertion follows by Lemma \ref{lem:locE3} together with the  expansion \eqref{duh3}.
%\end{proof}

\section{Global error analysis}
\label{sectionlocal2}

Note that we can write our scheme \eqref{scheme} in the form
$$
u^{n+1}= e^{i \tau \Delta} u^n -\tau S_{K}(\tau) \left( \left(\Pi_K u^n\right)^2 \varphi_1(-2i \tau\Delta) \Pi_K \overline u^n\right),
$$
and that the exact solution $u^K(t)$ of the projected equation \black is given by \eqref{efinal}. Let $e^n = u^K(t_n) - u^n$ denote the error, i.e., the difference between numerical and exact solution. The errors thus satisfies the following recursion
\begin{multline*}
e^{n+1} =  e^{i \tau \Delta} e^n -\tau S_{K}(\tau) \left(  \left(\Pi_K u^{K}(t_{n})\right)^2 \varphi_1(-2i \tau\Delta) \Pi_K \overline u^K(t_{n}) - \left(\Pi_K u^n\right)^2 \varphi_1(-2i \tau\Delta) \Pi_K \overline u^n\right)\\
+ i S_K(\tau)\left( E_{1}(u^K, \tau, t_{n}) + E_{2}(u^K, \tau, t_{n})\right)
\end{multline*}
with $e^0=0$. \black
Therefore, by \black solving this recursion, \black we obtain that
\begin{multline}
\label{duhamelscheme}
e^{n}=\tau \sum_{k=0}^{n-1}  S_{K}(t_{n-k}) \left( \left(\Pi_K u^{K}(t_{k})\right)^2 \varphi_1(-2i \tau\Delta) \Pi_K \overline u^K(t_{k}) -  \left(\Pi_K u^k\right)^2 \varphi_1(-2i \tau\Delta) \Pi_K \overline u^k\right) \\
+ \black i \sum_{k=0}^{n-1}  S_{K}(t_{n-k}) \left(E_{1}(u^K, \tau, t_{k}) + E_{2}(u^K, \tau, t_{k})\right). \black
\end{multline}

Let us set
\beq
\label{err2bis}
\mathcal{F}_{1}^n= \sum_{k=0}^{n-1}  S_{K}(t_{n-k}) E_{1}(u^K, \tau, t_{k}), \quad  \mathcal{F}_{2}^n= \sum_{k=0}^{n-1}  S_{K}(t_{n-k}) E_{2}(u^K, \tau, t_{k}).
\eeq
Then, we have the following estimates

\begin{lemma}
\label{LemmaF1n}
 For  every $T>0$  and $(p,q)$ admissible with $p>2$, there exists  $C_{T}>0$ such that
  for every $K$, $\tau$ as in  Proposition \ref{propuKinfty}, with $\alpha <2$ in dimension $3$, we have the estimates
\begin{equation}
\label{estF1n}
\|\mathcal{F}_{1}^n \|_{l^p_{\tau, N}L^{q}} \lesssim ( K \tau^{1 \over 2})^{ 2 \over p} \tau C_{T}, \quad  \| \mathcal{F}_{1}^n \|_{l^p_{\tau, N}W^{1,q}} \lesssim  K ( K \tau^{1 \over 2})^{ 2 \over p} \tau C_{T}.
\end{equation}
\end{lemma}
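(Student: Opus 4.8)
The plan is to estimate $\mathcal{F}_1^n$ by applying the inhomogeneous discrete Strichartz estimate \eqref{TT*} from Theorem \ref{theoDSE} to the expression \eqref{err2bis}, after unfolding the definition \eqref{err1} of $E_1$. Recall that $E_1$ is a time integral over $[0,\tau]$ of the sum $T_1+\dots+T_5$ from \eqref{Tj}; substituting this into $\mathcal{F}_1^n=\sum_{k=0}^{n-1}S_K(t_{n-k})E_1(u^K,\tau,t_k)$ and interchanging the (finite) sum with the $\ddo s$ integral, I would rewrite $\mathcal{F}_1^n$ as $\int_0^\tau \big(\tau\sum_{k}S_K((n-k+\hat s)\tau)\,G_k(s)\big)\,\frac{\ddo s}{\tau}$ for a suitable shift $\hat s\in[-3,3]$ absorbed from the factors $e^{\pm i s\Delta}$ appearing inside $T_j$, where $G_k(s)$ collects the integrand $\sum_j e^{-is\Delta}T_j(s,t_k)$ evaluated at grid point $t_k$. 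This is precisely the form to which \eqref{TT*} applies, and it is the reason the authors carried the shift parameter $s\tau$ through Theorem \ref{theoDSE}.

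Applying \eqref{TT*} with $p_1=p$ and a conjugate admissible pair $(p_2,q_2)$ (for the $L^2$-based right-hand side I would take the endpoint-avoiding pair giving $q_2'=2$, so that the $l^{p_2'}_\tau L^{q_2'}$ norm is an $l^{p_2'}_\tau L^2$ norm) yields
$$
\|\mathcal{F}_1^n\|_{l^p_{\tau,N}L^q}\lesssim (K\tau^{1/2})^{2/p_1+2/p_2}\,\Big\|\tfrac1\tau\!\int_0^\tau \|G_k(s)\|_{L^2}\,\ddo s\Big\|_{l^{p_2'}_{\tau,N}},
$$
and the key point is then to show the right-hand data norm is bounded by $\tau\,C_T$ uniformly in $K,\tau$, while the Strichartz prefactor contributes exactly the announced power $(K\tau^{1/2})^{2/p}$. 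For this I would absorb the extra $2/p_2$ power of $(K\tau^{1/2})$ into the data estimate (indeed each term $T_j$ already carries one Duhamel factor $T(\Pi_K u^K)(s,t_k)=O(s)=O(\tau)$, supplying the overall gain of $\tau$), and bound the $L^2$ norms of the $T_j$ pointwise by products of $\|\Pi_K u^K\|_{L^\infty}$ and $\|\Pi_K T(\Pi_K u^K)\|_{L^\infty}$ type factors. The crucial inputs are the discrete Strichartz bounds for $u^K$ established in Proposition \ref{propuKinfty}, in particular \eqref{uKinfty}, which control $\|u^K(t_k+s)\|_{l^2_{\tau,N}L^\infty}$ uniformly in $\tau$; these replace the naive Sobolev embedding that fails in dimension $d\geq 2$ and are exactly what makes the whole argument work at $H^1$ regularity.

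For the second, $W^{1,q}$ estimate I would differentiate once. Each $T_j$ contains three Schr\"odinger-evolved factors; applying $\nabla$ produces a sum of terms in which $\nabla$ hits one factor. Since the top-order data is only $H^1$, the gradient on a factor carrying a projector $\Pi_K$ can be traded using the Bernstein-type bound $\|\nabla \Pi_K f\|\lesssim K\|f\|$ (or the structure $\nabla\Pi_K = K\,(\text{bounded multiplier})$), which is the source of the extra factor of $K$ in the claimed bound $K(K\tau^{1/2})^{2/p}\tau\,C_T$. The remaining factors are controlled exactly as before by \eqref{uKinfty} and $\|\nabla u^K\|_{L^\infty_T L^2}\le C_T$ from Proposition \ref{propNLSK}, together with Corollary \ref{corfilter} wherever the filter $\varphi_1(2i\tau\Delta)$ appears.

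\emph{The main obstacle} I anticipate is bookkeeping rather than conceptual: correctly tracking that each of $T_1,\dots,T_5$ carries at least one Duhamel factor (hence a full power of $\tau$) and identifying, for each term, which shift $\hat s$ and which admissible endpoint-avoiding pair $(p_2,q_2)$ make \eqref{TT*} applicable while keeping the right-hand norm finite and uniform in $K$. The genuinely delicate step is the uniform-in-$\tau$ control of the $L^\infty$-type factors: one must be careful that all factors are evaluated on the time grid (or at shifts $s\in[0,\tau]$ handled by the $\sup_{s\in[0,\tau]}$ versions of Proposition \ref{propuKinfty}), since it is only there that the discrete Strichartz estimate \eqref{uKinfty}, and not a pointwise-in-time Sobolev bound, is available in dimensions $2$ and $3$.
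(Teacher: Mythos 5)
Your high-level architecture is indeed the paper's: write $\mathcal F_1^n$ as a discrete $\mathcal T\mathcal T^*$ sum, apply \eqref{TT*} with $(p_1,q_1)=(p,q)$ and $(p_2,q_2)=(\infty,2)$, so that the data is measured in $l^1_\tau L^2$ and the prefactor is exactly $(K\tau^{1/2})^{2/p}$; gain the factor $\tau$ from the Duhamel factor $T(\Pi_K u^K)$ present in every $T_j$; control $L^\infty$-type quantities by Proposition \ref{propuKinfty}; and get the $W^{1,q}$ bound from the frequency localization $\Pi_{2K}\mathcal F_1^n=\mathcal F_1^n$ at the cost of one factor $K$ (the paper applies this Bernstein-type bound to $\mathcal F_1^n$ as a whole, which is simpler than your factor-by-factor differentiation, but both work). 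Two inaccuracies in your setup are worth flagging: with $(p_2,q_2)=(\infty,2)$ one has $2/p_2=0$, so your remark about ``absorbing the extra $2/p_2$ power'' is vacuous; and no shift is actually needed for $\mathcal F_1^n$, since the outer $e^{-is\Delta}$ in \eqref{err1} sits inside the $L^2$ data norm, where it is unitary, while the factors $e^{\pm is\Delta}$ \emph{inside} the products $T_j$ cannot be absorbed into a shift at all, as they do not commute past pointwise products (the shift is genuinely needed only for $\mathcal F_2^n$ in Lemma \ref{LemmaF2n}).

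The genuine gap is in the data estimate, i.e.\ the bound $\|T_j\|_{l^1_{\tau,N}L^2}\lesssim \tau C_T$, which you propose to obtain ``by products of $\|\Pi_K u^K\|_{L^\infty}$ and $\|\Pi_K T(\Pi_K u^K)\|_{L^\infty}$ type factors''. This fails on two counts. First, no uniform $L^\infty$ bound on the propagated Duhamel factor $e^{\pm is\Delta}\Pi_K T(\Pi_K u^K)(s,t_k)$ is available: $e^{is\Delta}$ is not bounded on $L^\infty$, and Bernstein costs $K^{d/2}$, which is fatal for the first estimate in \eqref{estF1n} (no spare power of $K$ is permitted there). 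Second, if you instead place the Duhamel factor in $L^2$ and estimate it internally again by $\|u^K\|_{L^\infty}^2\|u^K\|_{L^2}$, each cubic term carries \emph{two} pairs of $L^\infty$ factors (one at $t_k$, one at $t_k+\tilde s$ inside the Duhamel integral), and the $l^1_{\tau,N}$ sum of such a product cannot be closed with the only uniform tool at hand, the $l^2_\tau L^\infty$ bound \eqref{uKinfty}: by Cauchy--Schwarz you would need an $l^4_\tau L^\infty$ bound, which property {\bf (H)} cannot produce in dimension $3$ (the only admissible pair with $p=4$ is $(4,3)$, and then $\sigma q>3$ forces $\sigma>1$, incompatible with $\sigma+\tfrac12(1-\tfrac1\alpha)\le 1$ for $\alpha\ge1$), or else $\|u^K\|_{L^\infty_T L^\infty}$, which is not available for $H^1$ data when $d\ge2$. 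The resolution---and this is what the paper does---is that the $l^2_\tau L^\infty$ budget may be spent exactly \emph{once} per cubic term, with all remaining factors bounded pointwise in time through $H^1\subset L^6$ ($d\le3$): for instance $\|T_1\|_{L^2}\lesssim \|e^{is\Delta}\Pi_K u^K(t_k)\|_{L^6}^2\,\|T(\Pi_K u^K)(s,t_k)\|_{H^1}$, and the $L^\infty$ pair appears only inside the $H^1$ estimate \eqref{Tuuusup1} of the Duhamel factor, whose $l^1_{\tau,N}$ sum is $\tau C_T$ by \eqref{estimationTF1} and \eqref{uKinfty}. Finally, your framework says nothing about $T_2$, $T_3$, $T_5$ in \eqref{Tj}, which contain two or three Duhamel factors ($T_3$ contains no factor $u^K(t_k)$ whatsoever); for those the paper bounds the additional Duhamel factors in $l^\infty_{\tau,N}H^1$ via the \emph{continuous} Strichartz estimate \eqref{borne2} (see \eqref{TuuuSup2}), an ingredient entirely absent from your sketch.
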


The second part of the estimate \eqref{estF1n} is very rough, but will be enough for our purpose. Note that, by using Sobolev embedding, we deduce from the above estimates that  in dimension $3$, we have
\beq
\label{estF1n3D}
\| \mathcal{F}_{1}^n \|_{l^4_{\tau, N}L^4}
\lesssim  \| \mathcal{F}_{1}^n \|_{l^4_{\tau, N} W^{{1 \over 4}, 3}}
\lesssim   \| \mathcal{F}_{1}^n \|_{l^4_{\tau, N}  L^3}^{3 \over 4} \| \mathcal{F}_{1}^n \|_{l^4_{\tau, N} W^{1, 3}}^{1 \over 4}
\lesssim \tau K^{1 \over 4} ( K \tau^{1 \over 2})^{ 1 \over 2}.
\eeq

As we will see below, $\mathcal{F}^n_{1}$ is the best part of the error in the sense that the above estimates yield an error of order $\tau$ in $l^\infty_{\tau}L^2$.

\begin{proof}
In the proof, $C_{T}$ will stand for a number that depends only on $T$ and on the estimates of Proposition \ref{propNLSK} of the exact solution. In particular, it is independent of $\tau$ and $K$. We first write by using the discrete Strichartz estimates
\begin{equation}
\label{EF1}
\begin{aligned}
\| \mathcal{F}_{1}^n \|_{l^p_{\tau, N}L^{q}} &\leq  ( K \tau^{1 \over 2})^{ 2 \over p} \tau^{-1} \| E_{1}(u^K, \tau, t_{k}) \|_{l^1_{\tau, N}L^2} \\
&\leq C_{T}( K \tau^{1 \over 2})^{ 2 \over p} \sup_{s\in[0, \tau]} \left(   \| T_{1}(t_{n}, s) \|_{l^1_{\tau, N}L^2}  +  \| T_{2}(t_{n}, s) \|_{l^1_{\tau,N}L^2}  +   \| T_{3}(t_{n}, s) \|_{l^1_{\tau,N}L^2}\right.\\
&\black \left. \hspace{6.2cm} {}+  \| T_{4}(t_{n}, s) \|_{l^1_{\tau,N}L^2}  +   \| T_{5}(t_{n}, s) \|_{l^1_{\tau,N}L^2}\right).\black
\end{aligned}
\end{equation}
Next, by using \eqref{Tj}, we get that
$$
\| T_{1}(t_{n}, s) \|_{L^2} \lesssim \| (e^{is \Delta } u^K(t_{n}))^2\|_{L^3} \| e^{i s \Delta } T(\Pi_K u^K)(s, t_{n}) \|_{L^6} \lesssim  \| e^{is \Delta } u^K(t_{n})\|_{L^6}^2 \| e^{i s \Delta } T(\Pi_K u^K)(s, t_{n}) \|_{L^6}.
$$
Next, we have  by Sobolev embedding that
$$
\| e^{is \Delta } u^K(t_{n})\|_{L^6} \lesssim   \| e^{is \Delta } u^K(t_{n})\|_{H^1} \lesssim \|u^K(t_{n}) \|_{H^1}
$$
and since
\begin{equation}
\label{TuuuDuhamel}
e^{i s \Delta } T(\Pi_K u^K)(s, t_{n})= \int_{0}^s e^{i(s-\tilde{s})\Delta } |\Pi_{K}u^K (t_{n} + \tilde{s})|^2 \Pi_{K}u^K (t_{n} + \tilde{s}) \dd \tilde{s},
\end{equation}
we obtain by Sobolev embedding that
$$
\|e^{i s \Delta } T(\Pi_K u^K)(s, t_{n})\|_{L^6} \lesssim  \black  \|T(\Pi_K u^K)(s, t_{n})\|_{H^1}. \black
$$%
Consequently,
\begin{equation}
\label{Tuuusup1}
\begin{aligned}
&\| T(\Pi_K u^K)(s, t_{n})\|_{H^1} \\
&  \lesssim  \int_{0}^\tau \left( \left\| |\Pi_{K}u^K (t_{n} + \tilde{s})|^2  \nabla \Pi_{K}u^K(t_{n} + \tilde{s}) \right\|_{ L^2} + \left\| |\Pi_{K}u^K (t_{n} + \tilde{s})|^2 \Pi_{K}u^K(t_{n} + \tilde{s}) \right\|_{ L^2} \right) \dd \tilde s  \\
&  \lesssim \| u^K \|_{L^\infty H^1}  \int_{0}^\tau \| u^K(t_{n}+ \tilde s)\|_{L^\infty}^2 \dd\tilde s
\end{aligned}
\end{equation}
which yields
\beq
\label{estimationTF1}
\|T(\Pi_K u^K)(s, t_{n})\|_{l^1_{\tau,N}H^1} \lesssim \tau \|u^K\|_{L^\infty_{T}H^1} \sup_{\tilde s \in [0, \tau]} \|u^K(t_{n}+\tilde s)\|_{l^2_{\tau,N}L^\infty}^2.
\eeq
We thus obtain that
$$
\| T_{1}(t_{n}, s) \|_{l^1_{\tau,N}L^2} \leq  \tau \| u^K \|_{L^\infty_{T} H^1}^3 \sup_{\tilde s \in [0, \tau]}\| u^K(t_{n}+ \tilde s)\|_{l^2_{\tau,N}L^\infty}^2.
$$
By using Proposition \ref{propuKinfty} that yields \eqref{uKinfty} thanks to Remark \ref{remcond} (with $\alpha <2$ in dimension $3$), we finally obtain
\beq
\label{T1fini}   \| T_{1}(t_{n}, s) \|_{l^1_{\tau,N}L^2} \leq \tau C_{T}.
\eeq

In a similar way, we obtain that
\begin{align*}  \|T_{2}(s, t_{n}) \|_{L^2} &  \lesssim  \| e^{is \Delta } u^K(t_{n})\|_{L^6} \| e^{i s \Delta } T(\Pi_K u^K)(s, t_{n}) \|_{L^6}^2 \\
& \lesssim \|u^K(t_{n})\|_{H^1} \|T(\Pi_K u^K)(s, t_{n}) \|_{H^1}^2
\end{align*}
and hence, by using again \eqref{estimationTF1}, we get
$$
\|T_{2}(s, t_{n}) \|_{l^1_{\tau,N}L^2} \lesssim \|u\|_{L^\infty_{T}H^1} \|T(\Pi_K u^K)(s,t_{n})\|_{l^\infty_{\tau,N}H^1} \|T(\Pi_K u^K)(s,t_{n})\|_{l^1_{\tau,N}H^1}.
$$
We can use again \eqref{estimationTF1} to estimate $ \|(\Pi_K u^K)(s,t_n)\|_{l^1_{\tau,N}H^1}$. Therefore, we only need to estimate $\|T(\Pi_K u^K)(s,t_n) \|_{l^\infty_{\tau,N}H^1}$. By using again \eqref{Tuuusup1} we get that
$$
\|T (\Pi_K u^K)(s,t_{n})\|_{H^1} \lesssim \int_{0}^\tau \|u^K(t_{n}+ s )\|_{H^1} \|u^K(t_{n}+ s) \|_{L^\infty}^2\dd s \lesssim \|u^K\|_{L^\infty_{T}H^1} \|u^K\|_{L^2_{T}L^\infty}^2
$$
and, therefore,
\beq
\label{TuuuSup2}
\|T(\Pi_K u^K)(s,t_{n})\|_{l^\infty_{\tau,N}H^1} \leq \|u^K\|_{L^\infty_{T}H^1} \|u^K\|_{L^2_{T}L^\infty}^2 \leq C_{T}
\eeq
since $u^K$ satisfies the continuous Strichartz estimates \eqref{borne2}. We thus finally obtain that
\beq
\label{T2fini}
\|T_{2}(s, t_{n}) \|_{l^1_{\tau,N}L^2} \lesssim \tau C_{T}.
\eeq

Finally, from the same arguments as above, we have that
$$
\|T_{3}(s, t_{n}) \|_{L^2} \lesssim   \| e^{i s \Delta } T(\Pi_K u^K)(s, t_{n}) \|_{L^6}^3  \lesssim  \| T(\Pi_K u^K)(s, t_{n}) \|_{H^1}^3.
$$
Consequently,
$$
\|T_{3}(s, t_{n}) \|_{l^1_{\tau,N}L^2} \lesssim   \| T(\Pi_K u^K)(s, t_{n}) \|_{l^\infty_{\tau,N}H^1}^2 \| T(\Pi_K u^K)(s, t_{n}) \|_{l^1_{\tau,N}H^1}
$$
and therefore, by using \eqref{TuuuSup2} and \eqref{estimationTF1}, we also obtain that
\beq
\label{T3fini}
\|T_{3}(s, t_{n}) \|_{l^1_{\tau,N}L^2}\leq \tau C_{T}.
\eeq
\black The term $T_4$ is estimated in the same way as $T_1$, the term $T_5$ in the same way as $T_2$. \black
Consequently, by combining \eqref{T1fini}, \eqref{T2fini}, \eqref{T3fini} with \eqref{EF1}, we finally obtain that
$$
\| \mathcal{F}_{1}^n \|_{l^p_{\tau, N}L^{q}} \lesssim ( K \tau^{1 \over 2})^{ 2 \over p}\tau C_{T}.
$$
Since $\mathcal{F}_{1}^n = \Pi_{2K} \mathcal{F}_{1}^n$ we \black also readily obtain that \black % more details ??
$$
\| \mathcal{F}_{1}^n \|_{l^p_{\tau, N}W^{1,q}} \lesssim K \| \mathcal{F}_{1}^n \|_{l^p_{\tau, N}L^{q}} \lesssim K ( K \tau^{1 \over 2})^{ 2 \over p} \tau C_{T}.
$$
 Indeed,  the first above estimate, is a consequence of the fact that we can write 
 \beq
 \label{expressionfinale} \Pi_{2K} \mathcal{F}_{1}^n = \rho_{\epsilon} * f, \quad \rho_{\epsilon} (x)= { 1 \over \epsilon^d} \rho \left ({x\over \epsilon} \right),  \quad \epsilon= { 1 \over 2 K}, \quad \rho= \mathcal{F}^{-1}( \chi^2) \in \mathcal{S}(\mathbb{R}^d)\eeq and standard convolution inequalities that thus  yield
  $$ \| \nabla  \mathcal{F}_{1}^n\|_{L^q} \lesssim { 1 \over \epsilon }   \|\mathcal{F}_{1}^n\|_{L^q}.$$
This ends the proof of \eqref{estF1n}.
\end{proof}

We shall now analyze the second part of the error.

\begin{lemma}
\label{LemmaF2n}
For every $T>0$ and $(p,q)$ admissible with $p>2$, there exists $C_{T}>0$ such that for every $K$, $\tau$ as in Proposition \ref{propuKinfty}, with $\alpha <2$ in dimension $3$, we have the estimates
\begin{alignat}{2}
& \label{F2n1D}
\| \mathcal{F}_{2}^n \|_{l^p_{\tau, N}L^{q}} \leq C_{T} \,\tau (K \tau^{1 \over 2})^{{1 \over 2}+ {2 \over p} }, \quad &&\mbox{if $d= 1$},  \\
& \label{F2n2D}
\| \mathcal{F}_{2}^n \|_{l^p_{\tau, N}L^{q}} \leq  C_{T} \, \tau (K \tau^{1 \over 2})^{ 1 + {2 \over p} },  \quad &&\mbox{if $d= 2$},  \\
& \label{F2n3D} \| \mathcal{F}_{2}^n \|_{l^p_{\tau, N}L^{q}} \leq C_{T}\,  \tau (K \tau^{1 \over 2})^{ 2 + {2 \over p}  }(\log K)^{ 2\over 3}, \quad &&\mbox{if $d= 3$}.
\end{alignat}
Moreover, in dimension $3$, we also have the estimate
\beq
\label{3D+} \| \mathcal{F}_{2}^n \|_{l^4_{\tau, N}L^{4}} \leq C_{T}\,  \tau (K \tau^{1 \over 2})^{ 5\over 2  } K^{1 \over 4}(\log K)^{2 \over 3}, \quad \mbox{if $d= 3$}.
\eeq
\end{lemma}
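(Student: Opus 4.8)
The plan is to treat $\mathcal{F}_2^n$ exactly as an inhomogeneous (retarded) Strichartz sum and to feed the quadratic-gradient source into the discrete estimate \eqref{TT*}. Inserting \eqref{err2} into the definition \eqref{err2bis} and writing $S_{K}(t_{n-k})e^{-is_1\Delta}=S_{K}((n-k)\tau-s_1)$, the two terms of $E_2$ produce, for each fixed pair $(s,s_1)$ with $0\le s_1\le s\le\tau$, a source
\[
G_{k}(s,s_1)= \nabla\big(e^{is_1\Delta}\Pi_K u^K(t_k)\big)^2\,\nabla\big(e^{i(s_1-2s)\Delta}\overline{\Pi_K u^K(t_k)}\big)+\big(\nabla e^{is_1\Delta}\Pi_K u^K(t_k)\big)^2\,e^{i(s_1-2s)\Delta}\overline{\Pi_K u^K(t_k)},
\]
so that $\mathcal{F}_2^n=-2\int_0^\tau\!\int_0^s\tau^{-1}\big(\tau\sum_{k=0}^{n-1}S_{K}((n-k)\tau-s_1)G_k(s,s_1)\big)\dd s_1\dd s$. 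The shift $-s_1/\tau\in[-1,0]$ is precisely why the shift was built into \eqref{TT*}. Applying \eqref{TT*} with $(p_1,q_1)=(p,q)$ and a free dual admissible pair $(p_2,q_2)$, $p_2>2$, and using Minkowski to pull the $l^p_{\tau,N}L^q$-norm inside the two integrals, the factor $\int_0^\tau\!\int_0^s\dd s_1\dd s=\tau^2/2$ together with the $\tau^{-1}$ leaves a clean prefactor $\tau$:
\[
\|\mathcal{F}_2^n\|_{l^p_{\tau,N}L^q}\lesssim \tau\,(K\tau^{1/2})^{\frac{2}{p}+\frac{2}{p_2}}\sup_{0\le s_1\le s\le\tau}\|G(s,s_1)\|_{l^{p_2'}_{\tau,N}L^{q_2'}} .
\]
Everything now reduces to bounding the cubic, doubly differentiated source $G$ in the dual norm $l^{p_2'}_{\tau,N}L^{q_2'}$.

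Second, I would estimate $G$ by H\"older, noting that both terms have the schematic form $\nabla u^K\cdot\nabla u^K\cdot u^K$: two differentiated factors and one undifferentiated one. For the differentiated factors I would use the discrete Strichartz estimate \eqref{nablauKgrid} of Proposition \ref{uKgrid} (legitimate since the shifts $s_1$ and $s_1-2s$ lie in $[-2\tau,2\tau]$), placing each factor in an admissible $l^{c}_{\tau,N}L^{a}$ at cost $(K\tau^{1/2})^{2/c}$. The undifferentiated factor carries no loss: in $d=1$ it goes into $L^\infty$ via $H^1\hookrightarrow L^\infty$, in $d=2$ into $L^\infty$ via the sharp bound \eqref{uKinfty}, and in $d=3$ into the critical space $L^6$ via $H^1\hookrightarrow L^6$ (the free evolution being an $H^1$-isometry), all bounded by $C_T$ through Proposition \ref{propNLSK}. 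Choosing the H\"older exponents so that the space splitting $2/a+1/b=1/q_2'$ and the time splitting $2/c+1/e=1/p_2'$ are compatible with admissibility of $(c,a)$ and $(p_2,q_2)$ fixes the total exponent of $K\tau^{1/2}$; a short computation gives $2/p_2+4/c=1/2$ in $d=1$, $=1$ in $d=2$, and $=2$ in $d=3$, which is exactly the power required in \eqref{F2n1D}--\eqref{F2n3D}.

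The delicate point, and the main obstacle, is $d=3$, where the exponent count sits at the critical Sobolev borderline (the ``no room'' phenomenon for $H^1$ against $d/2=3/2$). If the undifferentiated factor is kept in the critical space $L^6$, the three H\"older relations close with the symmetric choice $(p_2,q_2)=(4,3)$, gradients in $l^{8/3}_{\tau,N}L^4$ and the function factor in $l^\infty_{\tau,N}L^6$, yielding the power $(K\tau^{1/2})^{2+2/p}$. If instead one insists on controlling the undifferentiated factor through the uniform bound \eqref{uKinfty} (the natural choice for compatibility with the stability argument), the space and time relations fall short by the critical amount $1/4$; this deficit is recovered by a dyadic decomposition of $u^K$ into the $\sim\log K$ frequency blocks below $K$, estimating each block by Bernstein and summing, the optimal distribution of the missing derivative across the two gradient factors producing the stated $(\log K)^{2/3}$. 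Since $\log K\ge 1$ for $\tau$ small, \eqref{F2n1D}--\eqref{F2n3D} follow in either case.

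Finally, \eqref{3D+} I would deduce from the already established case $(p,q)=(4,3)$ of \eqref{F2n3D}. Since $\mathcal{F}_2^n=\Pi_{2K}\mathcal{F}_2^n$, the convolution representation \eqref{expressionfinale} yields $\|\mathcal{F}_2^n\|_{l^4_{\tau,N}W^{1,3}}\lesssim K\,\|\mathcal{F}_2^n\|_{l^4_{\tau,N}L^3}$, and interpolating $L^4=[L^3,W^{1,3}]_{1/4}$ (equivalently $W^{1/4,3}\hookrightarrow L^4$ in $d=3$) gives $\|\mathcal{F}_2^n\|_{l^4_{\tau,N}L^4}\lesssim \|\mathcal{F}_2^n\|_{l^4_{\tau,N}L^3}^{3/4}\|\mathcal{F}_2^n\|_{l^4_{\tau,N}W^{1,3}}^{1/4}$. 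This produces exactly the extra factor $K^{1/4}$ on top of $\tau(K\tau^{1/2})^{5/2}(\log K)^{2/3}$, as claimed.
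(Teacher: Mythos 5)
Your proposal is correct, and in the decisive case $d=3$ it takes a genuinely different route from the paper's proof. The common skeleton is identical: write $\mathcal{F}_2^n$ as a double time integral of the shifted retarded sum $\tau\sum_k S_K(t_{n-k}-s_1)G(s,s_1,t_k)$, pull the $l^p_{\tau,N}L^q$ norm inside by Minkowski, apply the discrete inhomogeneous estimate \eqref{TT*}, and bound the cubic source $G\sim \nabla u^K\,\nabla u^K\,u^K$ by H\"older with the gradient factors controlled through Proposition \ref{uKgrid}. The difference lies in the choice of the dual pair. The paper always takes $(p_2',q_2')=(1,2)$, i.e., puts $G$ in $l^1_{\tau,N}L^2$: in $d\le 2$ this forces the undifferentiated factor into $l^2_{\tau,N}L^\infty$ (available without loss from \eqref{uKinfty}), which coincides with your choice; but in $d=3$ it forces it into $l^4_{\tau,N}L^\infty$, which the paper controls via the frequency-localized borderline Sobolev embedding (Lemma \ref{lemsobfin}) at the cost of $(\log K)^{2/3}(K\tau^{1/2})^{1/2}$ --- that is precisely where the logarithm in \eqref{F2n3D} originates. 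You instead keep the dual pair free and in $d=3$ take $(p_2,q_2)=(4,3)$, gradients in $l^{8/3}_{\tau,N}L^4$ and the undifferentiated factor in $l^\infty_{\tau,N}L^6$ via the critical embedding $H^1\hookrightarrow L^6$; the H\"older relations close ($\tfrac14+\tfrac14+\tfrac16=\tfrac23=\tfrac1{q_2'}$, $\tfrac38+\tfrac38=\tfrac34=\tfrac1{p_2'}$) and the total loss $\tfrac{2}{p_2}+\tfrac{4}{c}=\tfrac12+\tfrac32=2$ matches \eqref{F2n3D}. In other words, you pay the extra $(K\tau^{1/2})^{1/2}$ through the Strichartz loss on the dual index rather than through an $L^\infty$ embedding, which yields \eqref{F2n3D} \emph{without} the $(\log K)^{2/3}$ factor --- a slightly stronger bound that implies the stated one (for $\tau\le\tau_0<1$ one has $\log K\gtrsim 1$), and which, if propagated through Section \ref{sectionfinale}, would even remove the logarithm from Theorem \ref{maintheo} in dimension $3$. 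Your sketched dyadic-decomposition alternative (which essentially reconstructs Lemma \ref{lemsobfin}) is vaguer, but it is not needed since your primary route is complete. For \eqref{3D+}, your argument via $\mathcal{F}_2^n=\Pi_{2K}\mathcal{F}_2^n$ and the interpolation $W^{1/4,3}\hookrightarrow L^4$ is equivalent to the paper's direct use of Young's inequality (and mirrors the paper's own treatment of $\mathcal{F}_1^n$ in \eqref{estF1n3D}).
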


\begin{proof}
At first, we observe that using the expressions \eqref{err2}, \eqref{err2bis}, we can write that
\begin{equation}
\label{1F2n0}
\begin{aligned}
\mathcal{F}_{2}^n &= 2 \int_{0}^{\tau} \int_{0}^s e^{-is_{1} \Delta } \sum_{k=0}^{n-1}  S_{K}(t_{n-k})  G(s, s_{1}, t_{k})\dd s_{1}\dd s \\
& =  2 \int_{0}^{\tau} \int_{0}^s \sum_{k=0}^{n-1}  S_{K}(t_{n-k}- s_{1})  G(s, s_{1}, t_{k})\dd s_{1}\dd s,
\end{aligned}
\end{equation}
where
\begin{multline*}
G(s, s_{1}, t_{k}) = \black - \black \nabla  \left( e^{i s_1 \Delta} \Pi_K u^K(t_k) \right)^2 \nabla \left( e^{ i( s_1 -2s) \Delta } \overline{\Pi_K u^K(t_k)} \right) \\
+   \left( \nabla e^{i s_1 \Delta}  \Pi_K u^K(t_k) \right)^2 \left( e^{ i( s_1 -2s) \Delta } \overline{\Pi_K u(t_k)} \right)
\end{multline*}
and we observe that $s/\tau$, $s_{1}/\tau$, $(s_{1}- 2s) /\tau$ are uniformly bounded in $[-2, 1]$ so that we will be able to use Theorem \ref{theoDSE} and Propositions \ref{propuKinfty} and \ref{uKgrid}. We first estimate
\begin{equation}
\label{1F2n}
\| \mathcal{F}_{2}^n \|_{l^p_{\tau, N}L^{q}} \lesssim \tau^2  \sup_{0 \leq  s_{1} \leq s \leq  \tau}\left\|\sum_{k=0}^{n-1}  S_{K}(t_{n-k} - s_1)  G(s, s_{1}, t_{k})
\right\|_{l^p_{\tau, N}L^{q}} .
\end{equation}
Then, using  discrete Strichartz estimates, we obtain that
$$
\| \mathcal{F}_{2}^n \|_{l^p_{\tau, N}L^{q}} \lesssim \tau (K \tau^{1 \over 2})^{{ 2 \over p}}\sup_{0 \leq s, s_{1} \leq \tau}\ \|G(s, s_{1}, t_{k}) \|_{l^{ 1}_{\tau, N} L^{2}}.
$$
We shall then use slightly different arguments depending on the dimension. In dimension $d \leq 2$, we use H\"older's inequality to get
\begin{align*}
\|G(s, s_{1}, t_{k}) \|_{L^2} & \lesssim  \| \nabla e^{-i(s_{1}- 2s) \Delta} \Pi_{K}u^K(t_{k})\|_{L^4}   \| \nabla e^{is_{1}\Delta } \Pi_{K}u^K(t_{k})\|_{L^4}
\|e^{is_{1}\Delta }\Pi_{K} u^K(t_{k})\|_{L^\infty}  \\
& \quad \quad  +  \| \nabla e^{is_{1}\Delta }\Pi_{K} u^K(t_{k})\|_{L^4}^2 \|e^{-i(s_{1}- 2 s)\Delta }\Pi_{K} u^K(t_{k})\|_{L^\infty}
\end{align*}
and therefore,
\beq
\label{F21}
\| \mathcal{F}_{2}^n \|_{l^p_{\tau, N}L^{q}} \lesssim \tau (K \tau^{1 \over 2})^{  { 2 \over p} } \left(\sup_{\hat s \in [-2\tau, \tau]}  \| \nabla e^{i \hat s\Delta }\Pi_{K} u^K(t_{k})\|_{l^4_{\tau, N}L^4}\right)^2 \sup_{\hat s \in [-2\tau, \tau]} \|e^{i \hat s\Delta }\Pi_{K} u^K(t_{k})\|_{l^2_{\tau, N}L^\infty}.
\eeq
Next, we use the estimate
\beq
\label{F22}
\sup_{\hat s \in [-2\tau, 2\tau]} \|e^{i \hat s\Delta }\Pi_{K} u^K(t_{k})\|_{l^2_{\tau, N}L^\infty} \leq C_{T}
\eeq
from \eqref{Linftypq} of Proposition \ref{propuKinfty}. Indeed, as noticed after Proposition \ref{propuKinfty}, in \black dimensions $1$ and 2, \black this estimate is true without further restriction on $\alpha \geq 1$. \black Moreover, for all $\hat s \in [-2\tau, 2\tau]$, \black we have the estimate
\beq
\label{F23}
\| \nabla e^{i \hat s\Delta }\Pi_{K} u^K(t_{k})\|_{l^4_{\tau, N}L^4}\leq C_{T}  (K \tau^{1 \over 2})^{ d \over 4}
\eeq
from Proposition \ref{uKgrid} in dimension $d \leq 2$. Indeed for $d=1$, using H\"older and \eqref{nablauKgrid}, we have
$$
\| \nabla e^{i \hat s\Delta }\Pi_{K} u^K(t_{k})\|_{l^4_{\tau, N}L^4}\leq C_{T} \| \nabla e^{i \hat s\Delta }\Pi_{K} u^K(t_{k})\|_{l^8_{\tau, N}L^4} \leq C_{T}  (K \tau^{1 \over 2})^{ 1 \over 4},
$$
while for $d=2$, we can use directly the fact that $(4,4)$ is an admissible Strichartz pair to get
$$
\| \nabla e^{i \hat s\Delta }\Pi_{K} u^K(t_{k})\|_{l^4_{\tau, N}L^4}\leq C_{T} (K \tau^{1 \over 2})^{ 1 \over 2}.
$$
Consequently, by combining \eqref{F21}, \eqref{F22}, \eqref{F23}, we get the desired estimate
$$
\| \mathcal{F}_{2}^n \|_{l^p_{\tau, N}L^{q}} \lesssim \tau (K \tau^{1 \over 2})^{  { 2 \over p} } (K \tau^{1 \over 2})^{ d \over 2}
$$
for $d \leq 2$.

In dimension $3$,  the estimate   \eqref{F21}  is not sufficient to conclude since $(4,4)$ is not an admissible pair. We write in place the estimate
$$
\| \mathcal{F}_{2}^n \|_{l^p_{\tau, N}L^{q}} \lesssim \tau (K \tau^{1 \over 2})^{  { 2 \over p} } \left(\sup_{\hat s \in [-2\tau, \tau]}  \| \nabla e^{i \hat s\Delta }\Pi_{K} u^K(t_{k})\|_{l^{{8 \over 3}}_{\tau, N}L^4}\right)^2 \sup_{\hat s \in [-2\tau, \tau]} \|e^{i \hat s\Delta }\Pi_{K} u^K(t_{k})\|_{l^4_{\tau, N}L^\infty}
$$
and therefore, we get from \eqref{nablauKgrid} that
$$
\| \mathcal{F}_{2}^n \|_{l^p_{\tau, N}L^{q}} \lesssim \tau (K \tau^{1 \over 2})^{  { 2 \over p}  + {3 \over 2}} C_{T} \sup_{\hat s \in [-2\tau, \tau]} \|e^{i \hat s\Delta }\Pi_{K} u^K(t_{k})\|_{l^4_{\tau, N}L^\infty}.
$$
Here we cannot use anymore Proposition  \ref{propuKinfty}  in order to estimate $\|e^{i \hat s\Delta }\Pi_{K} u^K(t_{k})\|_{l^4_{\tau, N}L^\infty}$ without loss \black unless we take \black $\alpha =1$, which would yield a non optimal total error. We are thus forced to use Sobolev embedding and  \eqref{nablauKgrid}. \black Thanks to Lemma \ref{lemsobfin} \black % more datails where the log K comes from
$$
\|e^{i \hat s\Delta }\Pi_{K} u^K(t_{k})\|_{l^4_{\tau, N}L^\infty} \leq \log K  \|e^{i \hat s\Delta }\Pi_{K} u^K(t_{k})\|_{l^4_{\tau, N}W^{1, 3}} \leq C_{T}(\log K)^{2 \over 3} (K \tau^{1 \over 2})^{ 1 \over 2}.
$$
This finally yields
$$
\| \mathcal{F}_{2}^n \|_{l^p_{\tau, N}L^{q}} \lesssim \tau (K \tau^{1 \over 2})^{  { 2 \over p} } (K \tau^{1 \over 2})^{2} C_{T} (\log K)^{ 2\over 3},
$$
which is the desired estimate in dimension $3$.

To get \eqref{3D+}, we just observe that since $\Pi_{2K} \mathcal{F}_{2}^n = \mathcal{F}_{2}^n$,
 we can thus write $ \mathcal{F}_{2}^n = \rho_{\epsilon} *  \mathcal{F}_{2}^n$ with $\rho_{\epsilon}$ as in \eqref{expressionfinale}
  and use Young’s inequality to obtain 
  \black % more details, Young's inequality
$$
\|\mathcal{F}_{2}^n \|_{l^4_{\tau,N}L^4} \lesssim K^{1 \over 4} \|\mathcal{F}_{2}^n \|_{l^4_{\tau,N}L^3}.
$$
Since $(4,3)$ is an admissible pair in dimension $3$, we can use \eqref{F2n3D} to get the desired estimate.
\end{proof}

\section{Proof of Theorem \ref{maintheo}}
\label{sectionfinale}

At first, we use Corollary \ref{corNLSK}, to write that
\begin{equation}
\label{erreurfinal}
\|u(t_{n})- u^{n}\|_{L^2} \leq \|u(t_{n}) - u^K(t_{n})\|_{L^2} + \|u^K(t_{n}) - u^{n}\|_{L^2} \leq {C_{T} \over K} + \|e^{n}\|_{L^2}.
\end{equation}
To estimate $e^{n}$ we shall use equation \eqref{duhamelscheme}. Note that the consistency error on the right-hand side can be estimated thanks to Lemma \ref{LemmaF1n} and Lemma \ref{LemmaF2n}. We shall choose our parameter $K$ in an optimal way so that the contribution of the consistency error in $L^2$ is of order $1/K$ in order to get contributions of the same order in the two terms of \eqref{erreurfinal}. This choice will depend on the dimension since the estimates of Lemma \ref{LemmaF2n} depend on the dimension.

\subsection*{Dimension $d \leq 2$}

In dimension $d \leq 2$, by using Lemma \ref{LemmaF1n} and Lemma \ref{LemmaF2n}, we have that
$$
\|\mathcal{F}_{1}^n\|_{l^\infty_{\tau, N}L^2} +  \|\mathcal{F}_{2}^n\|_{l^\infty_{\tau, N}L^2} \leq C_{T} (\tau + K^{d\over 2} \tau^{ 1 + {d\over 4}}).
$$
We thus choose $K$ such that $K^{d \over 2} \tau^{ 1 + {d \over 4}}= {1 \over K}$ which gives
\beq
\label{choix1}
K= \tau^{-{  1\over 2} {4 + d \over 2 + d }}.
\eeq
Note that this choice gives in particular that
\beq
\label{Ktau12}
K \tau^{1 \over 2}=  \tau^{-{ 1 \over 2 +d}},
\eeq
an expression that will be useful in future computations. Under this-CFL type condition, we get that
$$
\|\mathcal{F}_{1}^n\|_{l^\infty_{\tau, N}L^2} +  \|\mathcal{F}_{2}^n\|_{l^\infty_{\tau, N}L^2} \leq C_{T}\tau^{{1\over 2}{4+ d \over 2+d}}
$$
and more generally that for every $(p,q)$ admissible, $p>2$,
\beq
\label{1dcons1}
\|\mathcal{F}_{1}^n\|_{l^p_{\tau, N}L^q} +  \|\mathcal{F}_{2}^n\|_{l^p_{\tau, N}L^q} \leq  C_{T}\tau^{{1\over 2}{1 \over 2+d}( 4+d - {4\over p}) }.
\eeq

Let us define  $N_{1}$  such that $N_{1} \tau = T_{1} \leq T$. We shall first prove by induction that $e^{n}$ verifies the estimate
\beq
\label{induc1}
\|e^n\|_{X_{\tau,k}}:=  { 1\over \tau^{{1\over 2}{4+ d \over 2+d}} }\|e^n\|_{l^\infty_{\tau, k}L^2} +  {1 \over \tau^{{1\over 4}{6+d \over 2+d} }} \| e^n \|_{l^{8\over d}_{\tau, k}L^4} \leq 8 C_{T}, \qquad  0 \leq  k \leq N_{1},
\eeq
for $T_{1}$ and $\tau$ sufficiently small compared to $C_{T}$. Note that the control of the above norm gives that we propagate an estimate of order $\tau^{ {1\over 4}{6+d \over 2+d} }$ for  the norm $ \| e^n \|_{l^{8\over d}_{\tau, k}L^4} $. This is less than $\tau^{{1\over 4}{8+d \over 2+d} }$  that one would expect in view of estimate \eqref{1dcons1}. This would nevertheless be sufficient to close the following argument. One of the \black reasons for this choice is the control of \black terms involving the filter function $ \varphi_{1} (2i \tau \Delta)$. Indeed, this operator is not uniformly bounded on $L^p$ for $p \neq 2$. Nevertheless, we get by Sobolev embedding  and \eqref{estfilter3} that
$$
\| \black \varphi_{1} (-2i \tau \Delta) e^n \black \|_{l^{8\over d}_{\tau, k}L^4}  \lesssim \| \varphi_{1} (-2 i \tau \Delta) e^n \|_{l^{8\over d}_{\tau, k}H^{d \over 4}}
\black \leq  C_{T}{ 1 \over \tau^{d \over 8}} \| e^n \|_{l^{\infty}_{\tau, k}L^2} \black % give some details; this is not trivial for people in NA
\leq C_{T}{ \tau^{{1\over 2}{4+ d \over 2+d}} \over  \tau^{d \over 8}   } \| e^n \|_{X_{\tau,k}}.
$$
Consequently, since $ {1\over 2}{4+ d \over 2+d} -{d \over 8}  \geq {1\over 4}{6+d \over 2+d}$ when $d \leq 2$, we get that
\beq
\label{enfilter}
\| \varphi_{1} \black (-2i \tau \Delta) \black e^n \|_{X_{\tau,k}} \leq C_{T} \|e^n \|_{X_{\tau,k}}.
\eeq
Let us  rewrite \eqref{duhamelscheme} as
\beq
\label{duhamelscheme2}
e^{n}=\tau \sum_{k=0}^{n-1}  S_{K}(t_{n-k}) G_{k}+  \mathcal{F}_{1}^n + \mathcal{F}_{2}^n
\eeq
where
$$
G_{k} = \Pi_K e^k\bigl(\Pi_{K} u^K(t_{k}) + \Pi_{K} u^k\bigr) \varphi_1(-2i \tau\Delta) \Pi_K \overline u^K(t_{k})+\left(\Pi_K u^k\right)^2 \varphi_1(-2i \tau\Delta) \Pi_K \overline e^k.
$$
Note that by substituting $u^k = u^K(t_{k}) - e^{k}$, we can write
\beq
\label{Tkdef}
G_{k}=  G_{k}^1 + G_{k}^2 + G_{k}^3,
\eeq
where
\begin{align*}
G_{k}^1 &=  2( \Pi_{K}u^K(t_{k}) )( \varphi_1(-2i \tau\Delta) \Pi_K \overline u^K(t_{k})) ( \Pi_{K} e^{k}) + (\Pi_{K} u^K(t_{k}))^2  \varphi_1(-2i \tau\Delta) \Pi_K \overline{e}^k, \\
G_{k}^2 &= -\bigl(\varphi_1(-2i \tau\Delta) \Pi_K \overline u^K(t_{k})\bigr)(\Pi_{K} e^k)^2 - 2 (\Pi_{K} u^K(t_{k}))( \Pi_{k} e^{k})\varphi_1(-2i \tau\Delta)\Pi_K \overline{e}^k, \\
G^3_{k} &=  (\Pi_{K} e^k)^2 \varphi_1(-2i \tau\Delta) \Pi_K \overline{e}^k.
\end{align*}
%It will be convenient to define also
%\begin{eqnarray*}
%  & & \mathcal{G}_{k}^1 = \tau \sum_{l=0}^{n-1}  S_{K}((n-l) \tau) G_{l}^1, \\
%  & &  \mathcal{G}_{k}^2 = \tau \sum_{l=0}^{n-1}  S_{K}((n-l) \tau) G_{l}^2, \\
%  & & \mathcal{G}_{k}^3 = \tau \sum_{l=0}^{n-1}  S_{K}((n-l) \tau) G_{l}^3.
% \end{eqnarray*}

To estimate $e^n$, we use the discrete Strichartz inequalities of Theorem \ref{theoDSE} and our choice \eqref{choix1}. In the following $C$ is again a generic number independent of $T_{1}$, $T$, $\tau$ and $K$. We first get that
\beq
\label{esten1}
\|e^n\|_{l^\infty_{\tau,k+1}L^2} \leq  C_{T} \tau^{{1\over 2}{4+ d \over 2+d}} + C \|G_{n}^1\|_{l^1_{\tau, k}L^2} + C \|G_{n}^2\|_{l^1_{\tau, k}L^2} + C { 1 \over \tau^{{1 \over 4} { d \over 2+ d}}}  \|G_{n}^3 \|_{l^{\left(8 \over d\right)'}_{\tau, k}L^{4\over 3}}.
\eeq
To estimate the right-hand side, we first use that
$$
\|G_{n}^1\|_{l^1_{\tau, k}L^2} \leq C \| e^n\|_{l^\infty_{\tau,k}L^2} \left( \|u^K(t_{n})\|_{l^2_{\tau,k} L^\infty}^2 +\|\varphi_1(-2i \tau\Delta) \Pi_K u^K(t_{n})\|_{l^2_{\tau,k} L^\infty}^2\right).
$$
If $d=1$, the above right-hand side can be easily estimated since
$$
\|u^K(t_{n})\|_{l^2_{\tau,k} L^\infty}^2 +\|\varphi_1(-2i \tau\Delta) \Pi_K u^K(t_{n})\|_{l^2_{\tau,k} L^\infty}^2 \leq C T_{1} \|u^K\|_{L^\infty_{T_{1}} H^1}^2 \leq \black T_{1}C_{T}^2. \black
$$
If $d=2$, we can use Remark \ref{remcond} to obtain
\begin{multline*}
\|u^K(t_{n})\|_{l^2_{\tau,k} L^\infty} +\|\varphi_1(-2i \tau\Delta) \Pi_K u^K(t_{n})\|_{l^2_{\tau,k} L^\infty} \\
\leq T_{1}^{1 \over 4}\left( \|u^K(t_{n})\|_{l^4_{\tau,k} W^{\sigma, 4}} + \|\varphi_1(-2i \tau\Delta) \Pi_K u^K(t_{n})\|_{l^4_{\tau,k} W^{\sigma, 4}}\right)  \black \leq  T_{1}^{1 \over 4} C_T \black
\end{multline*}
for some suitable choice of $\sigma$ slightly larger than $1/2$. This thus yields, by using Proposition \ref{propuKinfty} and Corollary \ref{corfilter},
\beq
\label{Gn1}
\|G_{n}^1\|_{l^1_{\tau, k}L^2}\leq \black T_{1}^{1\over 2} C_{T}^2 \black \|e^n\|_{l^\infty_{\tau, k}L^2}.
\eeq
Let us now estimate $G_{n}^2$. From similar arguments, we obtain that
$$
\|G_{n}^2\|_{l^1_{\tau, k}L^2} \leq C \Bigl(\| e^n\|_{l^4_{\tau,k}L^4}^2 +  \| \varphi_{1}(-2 i \tau\Delta )e^n\|_{l^4_{\tau,k}L^4}^2\Bigr) \left( \|u^K(t_{n})\|_{l^2_{\tau,k} L^\infty}
+\|\varphi_1(-2i \tau\Delta) \Pi_K u^K(t_{n})\|_{l^2_{\tau,k} L^\infty} \right)
$$
which yields
\beq
\label{Gn2}
\|G_{n}^2\|_{l^1_{\tau, k}L^2}\leq  C_{T}(\|e^n\|_{l^{8\over d }_{\tau, k}L^4}^2 + \|\varphi_1(-2i \tau\Delta)e^n\|_{l^{8\over d }_{\tau, k}L^4}^2).
\eeq
Finally, to estimate the last term in the right-hand side of \eqref{esten1}, we use that
$$
\|G_{n}^3 \|_{l^{\left(8 \over d\right)'}_{\tau, k}L^{4\over 3}} \leq  C\left(  \left\| \|e^n\|_{L^4}^3\right\|_{l^{\left(8 \over d\right)'}_{\tau,k}} + \left\| \|\varphi_{1}(-2i \tau \Delta)e^n\|_{L^4}^3\right\|_{l^{\left(8 \over d\right)'}_{\tau,k}} \right)
$$
and, \black since $3 {\left( 8 \over d\right)'} = {24 \over 8-d}\leq {8\over d}$ for $d \leq 2$, \black we obtain from  H\"older that
\beq
\label{Gn3}
\|G_{n}^3 \|_{l^{\left(8 \over d\right)'}_{\tau, k}L^{4\over 3}}  \leq  C_{T}\left (\|e^n\|_{l^{8\over d }_{\tau, k}L^4}^3 \black  + \left\|\varphi_{1}(-2i \tau \Delta)e^n \right\|_{l^{8\over d }_{\tau, k}L^4}^3 \right). \black
\eeq
Consequently, by plugging \eqref{Gn1}, \eqref{Gn2} and \eqref{Gn3} into \eqref{esten1} and by using the observation \eqref{enfilter}, we get that
\beq
\label{esten2}
\|e^n\|_{l^\infty_{\tau,k+1}L^2} \leq  C_{T} \tau^{{1\over 2}{4+ d \over 2+d}} + \black T_{1}^{1\over 2} C_{T}^2 \black \|e^n\|_{l^\infty_{\tau, k}L^2} +C_{T} \tau^{ {1 \over 2} { 6 +d \over 2 +d}} \|e^n\|_{X_{\tau,k}}^2 + C_{T}  { 1 \over \tau^{ {1 \over 4} {d \over 2 + d}} } \tau^{{ {3 \over 4} { 6 +d \over 2 +d}}}\|e^n\|_{X_{\tau,k}}^3.
\eeq
In a similar way, by using again the discrete Strichartz inequalities, we find that
$$
\| e^n \|_{l^{8\over d}_{\tau, k+1}L^4} \leq  C_{T} \tau^{{1\over 4}{8+d \over 2+d} } +    { C \over \tau^{ {1 \over 4} {d \over 2 + d}}}(\|G_{n}^1\|_{l^1_{\tau, k}L^2} + C \|G_{n}^2\|_{l^1_{\tau, k}L^2})  + { C \over \tau^{ {1 \over 2} {d \over 2 + d}} }\|G_{n}^3 \|_{l^{\left(8 \over d\right)'}_{\tau, k}L^{4\over 3}}.
$$
Consequently, by using again \eqref{Gn1}, \eqref{Gn2}, \eqref{Gn3} and \eqref{enfilter}, we find that
\beq
\label{esten3}
\| e^n \|_{l^{8\over d}_{\tau, k+1}L^4} \leq  C_{T} \tau^{{1\over 4}{8+d \over 2+d} } + { 1 \over \tau^{ {1 \over 4} {d \over 2 + d}}}  \black T_{1}^{1\over 2} C_{T}^2 \black \|e^n\|_{l^\infty_{\tau, k}L^2} + { C_{T} \over \tau^{ {1 \over 4} {d \over 2 + d}}} \tau^{ {1 \over 2} { 6 +d \over 2 +d}} \|e^n\|_{X_{\tau,k}}^2 + { C \over \tau^{ {1 \over 2} {d \over 2 + d}} }   \tau^{{ {3 \over 4} { 6 +d \over 2 +d}}}\|e^n\|_{X_{\tau,k}}^3.
\eeq
By combining \eqref{esten3}, \eqref{esten2} and by using that  $\|e^n\|_{X_{\tau,k}}$ satisfies \eqref{induc1}, we obtain that
$$
\|e^n\|_{X_{\tau,k}} \leq 2C_{T}  + \black T_{1}^{1 \over 2} C_T^2 \black \|e^n\|_{X_{\tau,k}} + C C_{T}^3  \tau^{1 \over 2 + d }  +  C C_{T}^3 \tau^{ 5 \over 2 ( 2 + d)}
\leq 2C_{T}  + \black T_{1}^{1 \over 2} C_T^2 \black  \|e^n\|_{X_{\tau,k}} + C C_{T}^3  \tau^{{1 \over 2 + d }{7\over 2}}.
$$
Consequently, by taking $T_{1}$ sufficiently small so that \black $T_{1}^{1 \over 2} C_T^2 \leq {1 \over 2}$, \black we get that
$$
\|e^n\|_{X_{\tau,k}} \leq 4C_{T}  +  C C_{T}^3  \tau^{{1 \over 2 + d }{7\over 2}} \leq 8 C_{T}
$$
for $ \tau $ sufficiently small. This proves that
$$
\|e^n\|_{X_{\tau,N_{1}}} \leq 8 C_{T}.
$$
We can then iterate the  estimates on $[T_{1}, 2T_{1}], ...$ to finally obtain after a finite number of steps
$$
\|e^n\|_{X_{\tau,N}} \leq  \widetilde{C}_{T}.
$$
This proves the error estimate in dimension $d \leq 2$.

 \subsection*{Dimension $d= 3$}
 
\black For $d= 3$, following \black the same scheme of proof, we observe that
$$
\|\mathcal{F}_{1}^n\|_{l^\infty_{\tau, N}L^2} +  \|\mathcal{F}_{2}^n\|_{l^\infty_{\tau, N}L^2} \leq C_{T} (\tau + \tau^2 K^2(\log K)^{2 \over 3}).
$$
To optimize the total error, we thus choose $K$ such that $ \tau^2 K^2= {1 \over K}$ which yields
\beq
\label{choix3}
K= \tau^{-{2 \over 3}}
\eeq
and therefore
$$ 
\alpha = { 4 \over 3}<2, \quad K \tau^{1 \over 2}= \tau^{-{1 \over 6}}.
$$
The error thus verifies in particular  thanks to Lemmas \ref{LemmaF1n}, \ref{LemmaF2n} and \eqref{estF1n3D} that
\beq
\label{erreur3d}  
\|\mathcal{F}_{1}^n\|_{l^\infty_{\tau, N}L^2} + \|\mathcal{F}_{2}^n\|_{l^\infty_{\tau, N}L^2} \leq C_{T} \left| \log \tau \right|^{ 2\over 3}  \tau^{2 \over 3}, \quad \ 
\|\mathcal{F}_{1}^n\|_{l^4_{\tau, N}L^4} + \|\mathcal{F}_{2}^n\|_{l^4_{\tau, N}L^4} \leq C_{T} \left| \log \tau \right|^{ 2\over 3}  \tau^{5 \over 12}.
\eeq
By using the same approach as before, we first prove by induction that for all $0 \leq  k \leq N_{1}$
\beq
\label{induc3} 
\|e^n\|_{X_{\tau,k}}:= { 1\over \left|\log \tau \right|^{ 2\over 3} \tau^{ 2 \over 3 }}\|e^n\|_{l^\infty_{\tau, k}L^2} + {1 \over  \tau^{ 19 \over 48}} \| e^n \|_{l^{4}_{\tau, k}L^4} \leq 8 C_{T}.
\eeq
Note that we propagate only \black the rate $\tau^{ 19 \over 48}$ \black for the $l^{4}_{\tau, N}L^4$ norm as we would expect $\tau^{5 \over 12} \left|\log \tau \right|^{2\over 3}$ from the estimate of the source term \eqref{erreur3d}. This is needed in order to close the argument below with this choice of norms. Moreover, as before this allows us to get by Sobolev embedding and \eqref{estfilter3} that
\beq
\label{enfilter3}
\| \varphi_{1}(-2 i \tau \Delta) e^n \|_{l^{\infty}_{\tau, k}L^4} \lesssim \| \varphi_{1}(-2 i \tau \Delta) e^n \|_{l^{\infty}_{\tau, k}H^{ 3 \over 4}}
\black\lesssim { 1 \over \tau^{3 \over 8}} \|e^n \|_{l^{\infty}_{\tau, k}L^2} \black % more details!
\lesssim \tau^{7 \over 24} \left|\log \tau \right|^{2\over 3} \|e^n\|_{X_{\tau,k}}.
\eeq

From the same arguments as above, we get from \eqref{duhamelscheme2} and the discrete Strichartz estimates that
\beq
\label{13D}  
\|e^n\|_{l^\infty_{\tau,k+1}L^2} \leq  C_{T} \tau^{2 \over 3 } \left|\log \tau\right|^{ 2\over 3} + C \|G_{n}^1\|_{l^1_{\tau, k}L^2} + C \|G_{n}^2\|_{l^1_{\tau, k}L^2} + C { 1 \over \tau^{1 \over 8} } \|G_{n}^3 \|_{l^{8\over 5}_{\tau, k}L^{4\over 3} }.
\eeq
To estimate $\|G_{n}^1\|_{l^1_{\tau, k}L^2} $, we just use H\"older to get as before
$$
\|G_{n}^1\|_{l^1_{\tau, k}L^2} \leq C \| e^n\|_{l^\infty_{\tau,k}L^2} \left( \|u^K(t_{n})\|_{l^2_{\tau,k} L^\infty}^2 + \|\varphi_1(-2i \tau\Delta) \Pi_K u^K(t_{n})\|_{l^2_{\tau,k} L^\infty}^2\right).
$$
Next, the crucial observation is that since $\alpha = { 4 \over 3}$, we can use Remark \ref{remcond} to get that
\begin{multline*}
\|u^K(t_{n})\|_{l^2_{\tau,k} L^\infty} +\|\varphi_1(-2i \tau\Delta) \Pi_K u^K(t_{n})\|_{l^2_{\tau,k} L^\infty} \lesssim  T_{1}^{1\over 10}\left( \|u^K(t_{n})\|_{l^{5\over 2}_{\tau,k} W^{\sigma, {30 \over 7}}} \right.\\+ 
\left. \|\varphi_1(-2i \tau\Delta) \Pi_K u^K(t_{n})\|_{l^{5\over 2}_{\tau,k} W^{\sigma, {30\over 7}}}\right)
\end{multline*}
for $\sigma \in (21/30, \black 24/30)$. \black This allows us to use Proposition \ref{propuKinfty} and Corollary \ref{corfilter} to obtain that
\beq
\label{infty3D}
\|u^K(t_{n})\|_{l^2_{\tau,k} L^\infty} + \|\varphi_1(-2i \tau\Delta) \Pi_K u^K(t_{n})\|_{l^2_{\tau,k} L^\infty} \lesssim  C_{T}T_{1}^{1\over 10}
\eeq
and therefore
\beq
\label{Gn13D} 
\|G_{n}^1\|_{l^1_{\tau, k}L^2} \leq C_{T} T_{1}^{1\over 5} \| e^n\|_{l^\infty_{\tau,k}L^2}.
\eeq

For the estimate of $\|G_{n}^2\|_{l^1_{\tau, k}L^2}$, we can still write
\begin{multline*}
\|G_{n}^2\|_{l^1_{\tau, k}L^2} \leq C\left( \| e^n\|_{l^4_{\tau,k}L^4}^2  + \| e^n\|_{l^4_{\tau,k}L^4} \|  \varphi_{1}(-2i\tau \Delta) e^n\|_{l^4_{\tau,k}L^4}\right) \\ 
\cdot \left( \|u^K(t_{n})\|_{l^2_{\tau,k} L^\infty} +\|\varphi_1(-2i \tau\Delta) \Pi_K u^K(t_{n})\|_{l^2_{\tau,k} L^\infty} \right).
\end{multline*}
Consequently, by using again \eqref{infty3D}, we obtain that
\beq
\label{Gn23D} 
\|G_{n}^2\|_{l^1_{\tau, k}L^2} \leq C_{T} \left(\| e^n\|_{l^4_{\tau,k}L^4}^2 + \| e^n\|_{l^4_{\tau,k}L^4} \|  \varphi_{1}(-2i\tau \Delta) e^n\|_{l^4_{\tau,k}L^4}\right) \leq  C_{T} (\tau^{19 \over 24} + \tau^{33 \over 48}\left|\log \tau \right|^{ 2\over 3})\|e^n\|_{X_{\tau,k}}^2,
\eeq
where \black we have used \eqref{enfilter3}  and the fact that
$$ \| \varphi_{1}(-2i\tau \Delta) e^n\|_{l^4_{\tau,k}L^4} \leq  T   \| \varphi_{1}(-2i\tau \Delta) e^n\|_{l^\infty_{\tau,k}L^4}$$
to get the last estimate. \black % \ell^\infty and not \ell^4, more details?

It remains to estimate $\|G_{n}^3 \|_{l^{8\over 5}_{\tau, k}L^{4\over 3} }$. From H\"older's inequality, we get
$$ 
\|G_{n}^3 \|_{l^{8\over 5}_{\tau, k}L^{4\over 3} } \leq C \left(  \| e^{n}\|_{l^{24\over 5}_{\tau,k} L^4}^3 + \| e^{n}\|_{l^{16\over 5}_{\tau,k} L^4}^2  \| \varphi_{1}(-2i\tau \Delta) e^n\|_{l^\infty_{\tau,k}L^4}\right).
$$
By using the \black reverse inclusion rule \black % comment on this; more details?
for the discrete $l^p_{\tau}$ spaces,
$$ \|f\|_{l^p_{\tau} X} \lesssim  { 1 \over \tau^{{ 1 \over q} - {1 \over p }}}  \|f\|_{l^q_{\tau} X}, \quad p>q, $$
 we get
$$  
\| e^{n}\|_{l^{24\over 5}_{\tau,k} L^4}^3 \leq \left( {1 \over \tau^{1 \over 24}} \|e^{n}\|_{l^4_{\tau,k}L^4}\right)^3 \leq { 1 \over \tau^{1 \over 8}} \|e^{n}\|_{l^4_{\tau,k}L^4}^3.
$$
This yields by using again \eqref{enfilter3}
\begin{multline}
\label{Gn33D}
\|G_{n}^3 \|_{l^{8\over 5}_{\tau, k}L^{4\over 3} } \leq  C_{T} \left( { 1 \over \tau^{1 \over 8}} \|e^{n}\|_{l^4_{\tau,k}L^4}^3 + \| e^{n}\|_{l^{4}_{\tau,k} L^4}^2  \| \varphi_{1}(-2i\tau \Delta) e^n\|_{l^\infty_{\tau,k}L^4}\right)\\  
\leq C C_{T}^4 ( \tau^{17\over 16} + \tau^{13 \over 12} \left | \log \tau \right|^{2\over 3} ) \|e^n\|_{X_{\tau, k}}^3.
\end{multline}
Consequently, we deduce from \eqref{13D} and \eqref{Gn13D}, \eqref{Gn23D}, \eqref{Gn33D} and by using the induction assumption that
\beq
\label{en3D}
{ \|e^n\|_{l^\infty_{\tau,k+1}L^2} \over \tau^{2 \over 3 } \left|\log \tau \right|^{ 2\over 3} } \leq  C_{T}  + C_{T} T_{1}^{1 \over 5}  {  \|e^n\|_{l^\infty_{\tau,k+1}L^2} \over \tau^{2 \over 3 } \left|\log \tau \right|^{2\over 3} }  +  C C_{T}^3   \tau^{1 \over 48} + C C_{T}^3 { 1 \over \left| \log  \tau \right|^{2\over 3}}  \tau^{13 \over 48}.
\eeq
In a similar way, we can estimate $ \| e^n \|_{l^{4}_{\tau, k+1}L^4}$.  By using as previously 
 that we have the frequency localization  $\Pi_{2K} e^n= e^n$  \black  \black % more details?
and the discrete Strichartz estimates, we get that
$$ 
\| e^n \|_{l^{4}_{\tau, k+1}L^4} \leq  C_{T} \left| \log \tau \right|^{2\over 3} \tau^{ 5 \over 12 } + C\left({ 1 \over \tau^{2 \over 3}}\right)^{1 \over 4}  \left({ 1 \over \tau^{1 \over 6}}\right)^{1 \over 2} \left(\|G_{n}^1\|_{l^1_{\tau, k}L^2} +  \|G_{n}^2\|_{l^1_{\tau, k}L^2}  + \left({ 1 \over \tau^{ 1 \over 6} }\right)^{3 \over 4}\|G_{n}^3 \|_{l^{{8\over 5}}_{\tau, k}L^{4\over 3}}\right).
$$
The additional loss  $\left({ 1 \over \tau^{2 \over 3}}\right)^{1 \over 4}$ comes from the fact that we need to use first the estimate
$$ \left\| \tau \sum_{k=0}^{n-1}  S_{K}(t_{n-k}) G_{k} \right\|_{l^{4}_{\tau, k+1}L^4} \lesssim  \left({ 1 \over \tau^{2 \over 3}}\right)^{1 \over 4}  \left\| \tau \sum_{k=0}^{n-1}  S_{K}(t_{n-k}) G_{k} \right\|_{l^{4}_{\tau, k+1}L^3}
$$
before using the discrete Strichartz estimates since $(4,4)$ is not admissible in dimension $3$. By using again \eqref{Gn13D}, \eqref{Gn23D}, \eqref{Gn33D}, we therefore obtain that
$$  
{\| e^n \|_{l^{4}_{\tau, k+1}L^4} \over \tau^{19 \over 48} } \leq C_{T} \left|\log \tau\right|^{2\over 3} \tau^{1 \over 48}+ C_{T} T_{1}^{1\over 5} \|e^n\|_{X_{\tau,k}}+C C_{T}^3 \tau^{1\over 48} + C C_{T}^3  \tau^{7 \over 24}.
$$
By combining \eqref{en3D} and the last estimate, we obtain that
$$ 
\|e^n\|_{X_{\tau,k+1}} \leq 2 C_{T} +   C_{T} T_{1}^{1\over 5} \|e^n\|_{X_{\tau,k+1}} + C C_{T}^3 \tau^\delta
$$
for some $\delta >0$. Therefore, we can finish the proof as above.

\section{Proof of the discrete Strichartz estimates.}
\label{sectionproofstrichartz}

\subsection{Dispersive estimates}

Let us start with the proof of a dispersive inequality.

\begin{lemma}
\label{lemdisp}
There exists $C>0$ such that \black for every $K \geq 1$, every $p \in [2, \infty]$, every $t\in \mathbb R$, and every $f\in L^{p'}$, \black we have the estimate
$$ 
\|S_{K} (t)f\|_{L^p} \leq C { K ^{d ( 1 - { 2 \over p})} \over 1 + |t|^{{d \over 2}( 1 - {2 \over p})} }\|f\|_{L^{p'}}.
$$
\end{lemma}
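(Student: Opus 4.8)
The plan is to prove the two endpoint estimates $p=2$ and $p=\infty$ and then obtain the full range $p\in[2,\infty]$ by Riesz--Thorin interpolation. The case $p=2$ is immediate: since $S_{K}(t)=e^{it\Delta}\Pi_{K}$ by \eqref{SK}, $e^{it\Delta}$ is unitary on $L^2$ and $\Pi_{K}$ is a Fourier multiplier with symbol $\chi^2(\cdot/K)$, we get
\[
\|S_{K}(t)f\|_{L^2}\le \|\chi^2\|_{L^\infty}\,\|f\|_{L^2}
\]
uniformly in $t$ and $K$, which matches the claimed bound at $p=2$ (where the right-hand factor is comparable to $1$).

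For $p=\infty$ I would estimate the convolution kernel $\mathcal{K}_{t}$ of $S_{K}(t)$ in $L^\infty$ and invoke Young's inequality, $\|S_{K}(t)f\|_{L^\infty}\le \|\mathcal{K}_{t}\|_{L^\infty}\|f\|_{L^1}$. Writing $S_{K}(t)=e^{it\Delta}\Pi_{K}$ as the composition of the free propagator and the convolution operator $\Pi_{K}=\rho_{K}*\,\cdot$, with $\rho_{K}(x)=K^{d}\rho(Kx)$ and $\rho=\mathcal{F}^{-1}(\chi^2)\in\mathcal{S}$ as in \eqref{expressionfinale}, the kernel is $\mathcal{K}_{t}=G_{t}*\rho_{K}$, where $G_{t}$ is the free Schr\"odinger kernel with $\|G_{t}\|_{L^\infty}=(4\pi|t|)^{-d/2}$. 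I would then combine two complementary bounds: first the trivial one obtained by bounding the oscillatory integrand in modulus,
\[
|\mathcal{K}_{t}(z)|\le (2\pi)^{-d}\!\int_{\mathbb{R}^d}\chi^2(\xi/K)\,\dd\xi = c\,K^{d},
\]
and secondly, using $\|\rho_{K}\|_{L^1}=\|\rho\|_{L^1}$ and Young,
\[
\|\mathcal{K}_{t}\|_{L^\infty}\le \|G_{t}\|_{L^\infty}\|\rho_{K}\|_{L^1}\lesssim |t|^{-d/2}.
\]
Using the first bound for $|t|\le 1$ and the second for $|t|\ge 1$, and exploiting $K\ge 1$, both regimes yield $\|\mathcal{K}_{t}\|_{L^\infty}\lesssim K^{d}/(1+|t|^{d/2})$, hence $\|S_{K}(t)f\|_{L^\infty}\le C K^{d}(1+|t|^{d/2})^{-1}\|f\|_{L^1}$.

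It then remains to interpolate. Applying Riesz--Thorin between $L^2\to L^2$ (norm $\le\|\chi^2\|_{L^\infty}$) and $L^1\to L^\infty$ (norm $\le C K^{d}/(1+|t|^{d/2})$) with $\theta=1-2/p$ produces $S_{K}(t):L^{p'}\to L^{p}$ with norm controlled by $\bigl(K^{d}/(1+|t|^{d/2})\bigr)^{1-2/p}$ times a constant. To recover the stated form I would use the elementary comparison, valid for all $a\ge 0$ and $\beta\in[0,1]$ by subadditivity of $s\mapsto s^{\beta}$,
\[
(1+a)^{\beta}\le 1+a^{\beta}\le 2\,(1+a)^{\beta},
\]
applied with $a=|t|^{d/2}$ and $\beta=1-2/p$, which gives $(1+|t|^{d/2})^{-(1-2/p)}\le 2\,\bigl(1+|t|^{(d/2)(1-2/p)}\bigr)^{-1}$ and converts the interpolated bound into the desired $K^{d(1-2/p)}\bigl(1+|t|^{(d/2)(1-2/p)}\bigr)^{-1}$.

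The estimate is essentially the classical Schr\"odinger dispersive inequality adapted to the frequency-localized group, the localization contributing only the harmless factor $K^{d}$ through the $L^1$-normalized kernel $\rho_{K}$. I do not expect a genuine obstacle; the two points requiring slight care are the splitting into the small-time ($K^{d}$) and large-time ($|t|^{-d/2}$) regimes to reach the single bound $K^{d}/(1+|t|^{d/2})$, and the verification that the subadditivity comparison yields a constant \emph{uniform in $p$} (the factor $2$ above is independent of $\beta\in[0,1]$), which is what allows a single $C$ to serve for all $p\in[2,\infty]$.
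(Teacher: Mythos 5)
Your proof is correct and follows essentially the same route as the paper's: both establish the frequency-localized dispersive bound $\|S_{K}(t)f\|_{L^\infty}\lesssim K^{d}(1+|t|^{d/2})^{-1}\|f\|_{L^1}$ by combining a trivial $K^{d}$ bound for small times (you via the modulus of the kernel, the paper via $\|\rho_{\epsilon}\|_{L^2}^2$) with the free dispersive decay plus Young's inequality for large times, and then interpolate against the trivial $L^2$ bound. Your explicit subadditivity comparison $(1+|t|^{d/2})^{-(1-2/p)}\le 2\bigl(1+|t|^{(d/2)(1-2/p)}\bigr)^{-1}$ is a detail the paper leaves implicit, and it is handled correctly with a constant uniform in $p$.
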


\begin{proof}
In this proof $C>0$ will stand for a number \black independent of $K$. \black % ??? \tau does not appear in the proof
Let us observe that with the choice of $\Pi_{K}$ as in \eqref{PiKdef}, we can write
$$  
S_{K} (t)f = \rho_{\epsilon} *\left( e^{i t \Delta } ( \rho_{\epsilon} *f )\right)
$$
where $\rho_{\epsilon}= {1 \over \epsilon^d} \rho\left( {x \over \epsilon}\right)$, $ \epsilon = {1 \over K }$, $\rho(x)= \mathcal{F}^{-1}(\chi) (x) \in L^1$. From Young's inequality for convolutions and the standard dispersive estimate for $e^{it \Delta}$, we thus get that
$$ 
\|S_{K} (t) f\|_{L^\infty} \leq C  \|\rho_{\epsilon}\|_{L^1}  {1 \over |t|^{d \over 2}} \| \rho_{\epsilon}\|_{L^1}  \| f \|_{L^1}  \leq C {1 \over |t|^{d \over 2}}  \| f \|_{L^1}, \quad t \neq 0.
$$
For $|t| \leq 1$, we use the estimate
$$ 
\|S_{K} (t) f\|_{L^\infty} \leq \| \rho_{\epsilon}\|_{L^2} \| e^{it \Delta } (\rho_{\epsilon} *f) \|_{L^2} \leq   \| \rho_{\epsilon}\|_{L^2} \|\rho_{\epsilon}*f\|_{L^2} \leq   \| \rho_{\epsilon}\|_{L^2}^2 \|f\|_{L^1} \leq C K^d  \| f \|_{L^1}.
$$
By combining the two inequalities, we get that
$$  
\|S_{K} (t)f\|_{L^\infty} \leq C {K ^d \over (1 + |t|^{d\over 2})} \|f\|_{L^1}.
$$
Since we also have that
$$ 
\|S_{K}(t) f \|_{L^2} \leq \|f\|_{L^2}, 
$$
we get the desired estimate by complex interpolation.
\end{proof}

\subsection{Proof of Theorem \ref{theoDSE}}
\label{proofDSE}
   
By a scaling argument, \black it is sufficient to \black study the case $\tau= 1$. Indeed, we have that
\beq
\label{scaleS} 
S_{K} (t) \phi(x)=  \left(S_{K\tau^{1\over 2}} \left( {t \over \tau}\right) \phi( \tau^{ 1\over 2} \cdot)\right) \left( {x \over \tau^{1\over 2} } \right).
\eeq
Therefore, it suffices to prove the estimates
\begin{align}
\label{unS}& \left\| S_{K \tau^{1\over 2} }(n) f \right\|_{l^pL^q} \leq C  ( K \tau^{1\over 2 })^{2\over p }   \|f \|_{L^2}, \\
\label{deuxS} & \left\| \sum_{n \in \mathbb{Z} }  S_{K \tau^{1 \over 2}} (-n) F_{n} \right\|_{L^2} \leq C  ( K \tau^{1\over 2 })^{2\over p}  \| F\|_{l^{p'} L^{q'} }, \\
\label{troisS}  &\left\| \sum_{k \in \mathbb{Z}}   S_{K \tau^{1 \over 2}} (n-k+ s)  F_{k} \right\|_{l^{p_{1}}L^{q_{1}} } \leq C ( K \tau^{1\over 2 } )^{ {2\over p_{1}} + { 2 \over p_{2} }   }  \| F\|_{l^{p_{2}'} L^{q_{2}'} },
\end{align}
where $l^p$ now stands for the usual discrete norms on sequences ($\| u\|_{l^p X}= (\sum_{n\in \mathbb{Z}} \|u_{n}\|_{X}^p)^{1 \over p}$). These estimates are equivalent through the usual \black $\mathcal T\mathcal T^*$ \black argument. If we define $(\mathcal T f)_{n}=  S_{K \tau^{1\over 2} }(n) f$. Then
\begin{align*}
&  \mathcal T^* F= \sum_{k \in \mathbb{Z} }  S_{K \tau^{1 \over 2}} (-k) F_{k},\\
& (\mathcal T\mathcal T^* F)_{n}= \sum_{k \in \mathbb{Z} }  S_{K \tau^{1 \over 2}} (n-k) F_{k}
\end{align*}
and
$$ 
\|\mathcal T\|_{L^2\rightarrow l^pL^q}= \|\mathcal T^*\|_{l^{p'}L^{q'}\rightarrow L^2} = \|\mathcal T \mathcal T^* \|_{l^{p'}L^{q'}\rightarrow  l^pL^q }^{1 \over 2}.
$$
Note that the estimate \eqref{troisS} corresponds to an estimate of $ \mathcal T e^{is \Delta } \mathcal T^*$ so that the estimate of $\mathcal T\mathcal T^*$ is a special case with $s=0$. We shall  first prove the estimate for $\mathcal T e^{is \Delta } \mathcal T^*$. We write that uniformly for $s \in [-8, 8]$,
$$  
\|\mathcal T e^{is \Delta }\mathcal T^* F \|_{l^p L^q } \leq \left\|  \sum_{k \in \mathbb{Z}} \| S_{K \tau^{1 \over 2}} (n-k+ s)  F_{k}\|_{L^q} \right\|_{l^p} \leq   C \left\|  \sum_{k \in \mathbb{Z}}  { (K \tau^{1\over 2} )^{d ( 1 - { 2 \over q})} \over 1 + |n-k|^{{d \over 2}( 1 - {2 \over q})}  }    \|F_{k}\|_{L^{q'}} \right\|_{l^p},
$$
where the last inequality comes from Lemma \ref{lemdisp} applied to $S_{K \tau^{1\over 2}}$ for $K \tau^{1 \over 2} \geq 1$. From a discrete version of the \black Hardy--Littlewood--Sobolev inequality (see again \cite{Ignat11}), we then obtain that \black % reference ??
$$  
\|\mathcal T e^{is \Delta} \mathcal T^* \|_{l^{p'}L^{q'}\rightarrow  l^pL^q }^{1 \over 2} \leq C (K \tau^{1\over 2} )^{{d\over 2} ( 1 - { 2 \over q})} =  C (K \tau^{1\over 2} )^{2 \over p} 
$$
by using the admissibility relation as long as $p>2$. This yields \eqref{unS} and \eqref{deuxS}. To get the general form of \eqref{troisS}, it suffices to estimate $\mathcal T e^{is \Delta}\mathcal T^*$ by composing the estimate for $\mathcal T$, the $L^2$ continuity of $e^{is \Delta}$ and the estimate for $\mathcal T^*$. Once we have \eqref{troisS}, the truncated version comes from the discrete Christ--Kiselev lemma as in \cite{IZ09} except in the case that  $(p_{1}, q_{1})$ and $(p_{2}, q_{2})$ are the endpoint, but we excluded it for these estimates. One could also use a classical interpolation argument.

\subsection{Proof of Corollary \ref{corDSE}}\label{sec:corDSE}

We shall use the Littlewood--Paley decomposition in order to convert the loss in the estimates of Theorem \ref{theoDSE} into a loss of derivative. Let us recall some basic facts, we refer to the book \cite{Bahouri-Chemin-Danchin} for the proofs. We take a partition of unity of the form
$$ 
1= \varphi_{-1}(\xi)+ \sum_{k \geq 0} \varphi_{k}(\xi)
$$
where $\varphi_{-1}$ is supported in the ball $\overline{B}(0,1)$ and each $\varphi_{k}(\xi)= \varphi(\xi/2^k)$, $k\geq 0$ is supported in the annulus $2^{k-1} \lesssim |\xi | \lesssim 2^{k+1}$. We can then decompose any tempered distribution as
$$ 
u = \sum_{k \geq -1} u_{k}, \qquad  \mathcal{F}(u_{k}) (\xi)= \varphi_{k}(\xi) \hat u(\xi).
$$
We shall only use the following facts:

\begin{itemize}
\item {\bf Bernstein inequality.} 
For every $\sigma \geq 0$ and every $p \in [1,\infty]$, there exist constants $c>0$ and $C>0$ such that for every $k \geq 0$, we have
\beq
\label{bernstein}         
c2^{\sigma k}\| ( \varphi_{k}(-i\nabla)) u\|_{L^p} \leq \| \left|-i\nabla\right|^\sigma ( \varphi_{k}(-i\nabla) u )\|_{L^p} \leq C 2^{\sigma k} \| ( \varphi_{k}(-i\nabla) u)\|_{L^p}.
\eeq
\item {\bf Characterization of $L^q$ spaces.}
For $q \geq 2$, the $L^q$ norm of a function is equivalent to the norm 
\beq
\label{Lqlittle} 
\left\|  \left( \sum_{k \geq -1} |u_{k}|^2 \right)^{1 \over 2} \right\|_{L^q}:= \|(u_{k}) \|_{L^q l^2}.
\eeq
Note that when $q=2$, we can invert the order of summation so that
$$ 
\|u\|_{L^2} \sim  \left(\sum_{k\in\mathbb Z } \|u_{k}\|_{L^2}^2\right)^{1 \over 2}= \|(u_{k})\|_{l^2 L^2},
$$
where $\sim$ denotes the equivalence of norms. Further, by Minkowski's inequality, we have that
$$ 
\|u\|_{L^q} \lesssim \| (u_{k})\|_{l^2L^q}.
$$
\end{itemize}

Let us first prove \eqref{Tloss}. By using the Littlewood--Paley decomposition, \black we first note that \black thanks to Minkowski's inequality, we have
\beq
\label{fin1} 
\| S_{K}(n\tau) u\|_{l^p_{\tau}L^q} \lesssim \left\| \left( \sum_{k \geq -1} \|S_{K}(n \tau) u_{k}\|^2_{L^q} \right)^{1 \over 2} \right\|_{l^p_{\tau}} \lesssim \left( \sum_{k \geq -1} \|S_{K}(n \tau) u_{k}\|^2_{l^p_{\tau}L^q} \right)^{1 \over 2}
\eeq
since $p \geq 2$. To estimate the terms inside the sum, we observe that
$$ 
S_{K}(n \tau) u_{k} = S_{2^k} (n \tau)\Pi_{K}u_{k}.
$$
Note that, because of the truncation $\Pi_{K}$, the sum is actually finite. We sum only over the $k$ such that $2^k \lesssim  K= \tau^{-{\alpha \over 2}}$.

If $2^k \tau^{1 \over 2 } \lesssim 1$, we can also write
$$  
S_{K}(n \tau) u_{k} = S_{ \tau^{ -{1\over 2} }} (n\tau) \Pi_{K}u_{k}.
$$
Therefore, by Theorem \ref{theoDSE}, we obtain the  estimate without loss
$$  
\|S_{K}(n \tau) u_{k}\|_{l^p_{\tau}L^q} \leq C \|u_{k}\|_{L^2}.
$$
If $\tau^{-{1 \over 2}}  \leq 2^k \leq \tau^{-{\alpha \over 2}}$, we obtain that
$$  
\|S_{K}(n \tau) u_{k}\|_{l^p_{\tau}L^q} \leq C (2^k \tau^{1\over 2})^{2\over p} \|u_{k}\|_{L^2}.
$$
Consequently, from the two sides of the Bernstein inequality, we obtain
$$   
\|S_{K}(n \tau) u_{k}\|_{l^p_{\tau}L^q} \leq  C (\tau^{1 \over 2})^{2\over p} \|u_{k}\|_{H^{ 2\over p}} \leq C \|u_{k}\|_{H^{ {2\over p}(1 - { 1\over  \alpha}) }}.
$$
This yields thanks to \eqref{fin1}
$$ 
\| S_{K}(n\tau) u\|_{l^p_{\tau}L^q} \lesssim \left(\sum_{k \geq -1} \| u_{k}\|_{H^{ {2\over p}(1 - { 1\over  \alpha}) }}^2 \right)^{1\over 2} \lesssim \|u\|_{H^{ {2\over p}(1 - { 1\over  \alpha}) }},
$$
which gives \eqref{Tloss}.

The proof of \eqref{TT*loss} follows exactly the same lines.

\section{Some technical estimates}
\subsection{Properties of the filter function}
\begin{lemma}
\label{lemfilter1}
 We have the following properties:
 \begin{itemize}
 \item  For every $p \in [1, \infty],$  there exists $C>0$ such that for every $\tau  \in (0, 1]$, 
 \beq
  \label{estfilter1}
  \left\| \varphi_{1}(-2 i \tau \Delta) \Pi_{\tau^{-{1 \over 2  } }} f \right\|_{L^p}  \leq C \| f\|_{L^p} \quad \mbox{for all } f \in L^p .
  \eeq
  \item For every $p \in (1, \infty),$  there exists $C>0$ such that for every $\tau  \in (0, 1]$
  \beq
  \label{estfilter2} \left\| { 1 - \Pi_{\tau^{-{1 \over 2  } } }\over  2 i \tau \Delta } f \right\|_{L^p}  \leq C \| f\|_{L^p} \quad \mbox{for all } f \in L^p.
    \eeq
  \item For every $s \in [0, 2 ]$, here exists $C>0$ such that for every $\tau  \in (0, 1]$
  \beq
  \label{estfilter3}
   \left\| \varphi_{1}(-2 i \tau \Delta) f \right\|_{H^s} \leq {C \over \tau^{s \over 2}} \|f\|_{L^2} \quad \mbox{for all } f \in L^2.
   \eeq
  \end{itemize}

\end{lemma}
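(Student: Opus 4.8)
The plan is to treat all three operators as Fourier multipliers and to remove the $\tau$-dependence by the parabolic scaling $\eta = \tau^{1/2}\xi$. Since $-\Delta$ has symbol $|\xi|^2$, the operator $\varphi_{1}(-2i\tau\Delta)$ is the multiplier with symbol $\varphi_{1}(2i\tau|\xi|^2) = (e^{2i\tau|\xi|^2}-1)/(2i\tau|\xi|^2)$, while $\Pi_{\tau^{-1/2}}$ has symbol $\chi^2(\tau^{1/2}\xi)$. In each case the combined symbol will turn out to be a fixed ($\tau$-independent) function evaluated at $\tau^{1/2}\xi$, and I would exploit that the relevant boundedness criteria (finiteness of the $L^1$ kernel norm, respectively the Mikhlin condition) are invariant under this dilation.

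For \eqref{estfilter1}, after substituting $\eta = \tau^{1/2}\xi$ the symbol of $\varphi_{1}(-2i\tau\Delta)\Pi_{\tau^{-1/2}}$ equals $g(\tau^{1/2}\xi)$ with $g(\eta) = \varphi_{1}(2i|\eta|^2)\chi^2(\eta)$, which does not depend on $\tau$. Since $\chi$ is smooth and compactly supported and $\varphi_{1}$ is entire, $g$ is smooth and compactly supported, so its inverse Fourier transform $G$ is Schwartz and in particular $G\in L^1$. The operator is then convolution against the kernel $\tau^{-d/2}G(\tau^{-1/2}\cdot)$, whose $L^1$ norm equals $\|G\|_{L^1}$ by scaling invariance. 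Young's inequality then gives the estimate for every $p\in[1,\infty]$ with the uniform constant $C = \|G\|_{L^1}$.

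For \eqref{estfilter2}, the same substitution shows that $\frac{1-\Pi_{\tau^{-1/2}}}{2i\tau\Delta}$ has symbol $h(\tau^{1/2}\xi)$ with $h(\eta) = (1-\chi^2(\eta))/(-2i|\eta|^2)$, again $\tau$-independent. Here the kernel is no longer integrable, so the endpoints $p=1,\infty$ must be excluded; instead I would invoke the Mikhlin--Hörmander theorem. One checks that $h$ vanishes for $|\eta|\le 1$ (where $\chi\equiv 1$), is smooth, and decays like $|\eta|^{-2}$, so that $|\partial^\alpha h(\eta)|\lesssim |\eta|^{-|\alpha|}$ for all required $\alpha$. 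As this condition is invariant under $\eta\mapsto \tau^{1/2}\xi$, the $L^p$ operator norm is $\tau$-independent for every $1<p<\infty$. For \eqref{estfilter3}, the key ingredient is the pointwise symbol bound $|\varphi_{1}(2i\tau|\xi|^2)|\le \min(1,(\tau|\xi|^2)^{-1})$, which follows from $|e^{i\theta}-1|\le \min(|\theta|,2)$ with $\theta = 2\tau|\xi|^2$. By Plancherel it suffices to bound $(1+|\xi|^2)^{s/2}|\varphi_{1}(2i\tau|\xi|^2)|$ by $C\tau^{-s/2}$, which I would verify by splitting into the regimes $\tau|\xi|^2\le 1$ and $\tau|\xi|^2>1$: in the first, $(1+|\xi|^2)^{s/2}\lesssim \tau^{-s/2}$ since $|\xi|^2\le\tau^{-1}$; in the second, the product is $\lesssim |\xi|^{s-2}/\tau$, and the hypothesis $s\le 2$ together with $|\xi|>\tau^{-1/2}$ gives $|\xi|^{s-2}\le \tau^{(2-s)/2}$, hence again $\lesssim\tau^{-s/2}$.

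I expect the only genuinely delicate point to be the verification of the Mikhlin condition and its scale-invariance in \eqref{estfilter2}; this is precisely what forces the exclusion of the endpoints $p=1,\infty$ there, in contrast to \eqref{estfilter1} where the compact support of $g$ gives an honest $L^1$ kernel. The restriction $s\le 2$ in \eqref{estfilter3} is likewise seen to be sharp for this argument, since it is exactly the condition under which $|\xi|^{s-2}$ stays bounded on the high-frequency regime.
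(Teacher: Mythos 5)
Your proposal is correct and follows essentially the same route as the paper: a parabolic rescaling to a $\tau$-independent symbol, then Young's inequality with a Schwartz kernel for \eqref{estfilter1}, the Mikhlin--H\"ormander theorem for \eqref{estfilter2}, and a pointwise symbol bound together with Plancherel for \eqref{estfilter3}. The only cosmetic difference is that you verify the uniform bound in \eqref{estfilter3} by splitting the regimes $\tau|\xi|^2\le 1$ and $\tau|\xi|^2>1$, whereas the paper phrases the same computation as the uniform boundedness of $\varphi_{1}(2i\tau|\xi|^2)\,(1+\tau|\xi|^2)^{s/2}$.
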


\begin{proof}
 We first prove \eqref{estfilter1}.
 Let us  set  by $L_{\tau} = \varphi_{1}(-2 i \tau \Delta) \Pi_{\tau^{-{1 \over 2  } }}.$ We first observe that
 $$L_{\tau } f =  \left( L_{1} (f (\tau^{1 \over 2 } \cdot)\right) \left({ \cdot \over \tau^{1\over 2} } \right).$$
 Therefore, by scaling, it suffices to prove the estimate  \eqref{estfilter1} for $L_{1}$.
  Next, we can also write that
  $$ L_{1}f= \Phi * f$$
  where $\Phi = \mathcal{F}^{-1} m_{1}$  with 
  $m_{1}(\xi)=  \varphi_{1}(2 i |\xi|^2) \chi^2( \xi).$
   Since $\chi$ is compactly supported and $\varphi_{1}$ is smooth, we have that $m_{1}$ and therefore $\Phi$ are in the Schwartz class, 
     therefore  we get in particular that  $\Phi\in L^1$ and the result follows from standard properties of convolutions.
     
     By the same scaling argument, to prove \eqref{estfilter2}, it suffices to prove the estimate with $\tau = 1$.
      We observe again that this amounts to prove the $L^p$ continuity of the Fourier multiplier  by 
      $ m_{2}(\xi)= { 1 - \chi^2(\xi) \over - 2 i |\xi|^2}.$  We observe that $m_{2}$ is a smooth bounded function that satisfies in addition
       the estimate  
       $$ | \partial^\alpha m_{2}(\xi) | \leq { C_{\alpha}  \over | \xi |^{\alpha}} \quad \text{for all } \xi \in \mathbb{R}^d$$
       for every $\alpha \in \mathbb{N}^d$. Consequently, the result follows from the H\"ormander--Mikhlin Theorem.
       
     To get \eqref{estfilter3}, it suffices to observe that the function
     $ \varphi_{1}(2i \tau  |\xi|^2 )  (1 + \tau  | \xi|^2)^{s \over 2}$ is uniformly bounded by a constant independent of $\tau$ 
      and to use the Bessel identity.
      \end{proof}
  \subsection{A localized critical Sobolev embedding}
   We have the following classical borderline Sobolev estimate for frequency localized functions in dimension $3$.
  \begin{lemma}
  \label{lemsobfin}
   The exists $C>0$ such that for every $u \in W^{1, 3}(\mathbb{R}^3)$ with $\mbox{Supp } \hat u \subset B(0, 4K)$, $K \geq 1$, 
   we have
   $$ \|u \|_{L^\infty} \leq C (\log K)^{2 \over 3} \|u\|_{W^{1,3}}.$$
  \end{lemma}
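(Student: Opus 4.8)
The plan is to run the Littlewood--Paley machinery recalled in Section~\ref{sec:corDSE}, decomposing $u=\sum_{k\geq-1}u_{k}$ with $\mathcal F(u_{k})=\varphi_{k}\hat u$, and to pay the borderline logarithmic cost through a H\"older inequality in the frequency index $k$ with exponents tuned so as to produce exactly the exponent $2/3=1/p'$ with $p=3$. The essential structural input is that the frequency localization $\hat u\subset B(0,4K)$ kills all but finitely many dyadic blocks: only the indices $-1\leq k\leq K_{0}$ with $K_{0}:=\lceil\log_{2}(4K)\rceil$ survive, so that the number of blocks is $K_{0}+2\lesssim\log K$ for $K\geq 2$ (which is the only regime of interest, $K=\tau^{-2/3}\to\infty$).

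First I would record two pointwise-in-frequency facts. By the Bernstein inequality in its $L^{3}\to L^{\infty}$ gain form (see \cite{Bahouri-Chemin-Danchin}) we have $\|u_{k}\|_{L^{\infty}}\lesssim 2^{3k/3}\|u_{k}\|_{L^{3}}=2^{k}\|u_{k}\|_{L^{3}}$ for $k\geq 0$, and combining this with \eqref{bernstein} gives $2^{k}\|u_{k}\|_{L^{3}}\lesssim\|\nabla u_{k}\|_{L^{3}}$; for the low-frequency block, $\|u_{-1}\|_{L^{\infty}}\lesssim\|u_{-1}\|_{L^{3}}\lesssim\|u\|_{L^{3}}$. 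Summing the triangle inequality over the surviving blocks yields
\[
\|u\|_{L^{\infty}}\leq\sum_{k=-1}^{K_{0}}\|u_{k}\|_{L^{\infty}}\lesssim\|u\|_{L^{3}}+\sum_{k=0}^{K_{0}}\|\nabla u_{k}\|_{L^{3}}.
\]
The naive bound $\|\nabla u_{k}\|_{L^{3}}\lesssim\|\nabla u\|_{L^{3}}$ on each term would only give a factor $\log K$, so instead I would apply H\"older in $k$ with conjugate exponents $(3/2,3)$ over the $O(\log K)$ indices,
\[
\sum_{k=0}^{K_{0}}\|\nabla u_{k}\|_{L^{3}}\leq(K_{0}+1)^{2/3}\Big(\sum_{k=0}^{K_{0}}\|\nabla u_{k}\|_{L^{3}}^{3}\Big)^{1/3},
\]
which is precisely where the prefactor $(\log K)^{2/3}$ is generated.

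It then remains to absorb the remaining $\ell^{3}$-sum into the $W^{1,3}$ norm. Since $\sum_{k}\|\nabla u_{k}\|_{L^{3}}^{3}=\big\|(\sum_{k}|\nabla u_{k}|^{3})^{1/3}\big\|_{L^{3}}^{3}$ and the pointwise embedding $\ell^{3}\hookrightarrow\ell^{2}$ gives $(\sum_{k}|\nabla u_{k}(x)|^{3})^{1/3}\leq(\sum_{k}|\nabla u_{k}(x)|^{2})^{1/2}$, the characterization \eqref{Lqlittle} of the $L^{3}$ norm applied to $\nabla u$ yields
\[
\Big(\sum_{k}\|\nabla u_{k}\|_{L^{3}}^{3}\Big)^{1/3}\leq\Big\|\big(\textstyle\sum_{k}|\nabla u_{k}|^{2}\big)^{1/2}\Big\|_{L^{3}}\lesssim\|\nabla u\|_{L^{3}}\leq\|u\|_{W^{1,3}}.
\]
Collecting the three displays gives $\|u\|_{L^{\infty}}\lesssim\|u\|_{L^{3}}+(\log K)^{2/3}\|\nabla u\|_{L^{3}}\lesssim(\log K)^{2/3}\|u\|_{W^{1,3}}$ for $K\geq 2$, as claimed. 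The main obstacle, and the only genuinely non-routine point, is obtaining the sharp exponent $2/3$ rather than $1$: this hinges on the interplay between the H\"older exponent $p'=3/2$ and the \emph{favorable} direction of the inclusion $\ell^{3}\subset\ell^{2}$, a gap that is available precisely because $p=3>2$, which is the same gap that makes the square-function characterization of $\|\nabla u\|_{L^{3}}$ usable here.
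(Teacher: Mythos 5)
Your proof is correct and follows essentially the same route as the paper's: Littlewood--Paley decomposition with finitely many blocks due to the frequency support, the $L^3\to L^\infty$ Bernstein gain plus \eqref{bernstein} to pass to $\|\nabla u_k\|_{L^3}$, H\"older in $k$ over the $O(\log K)$ blocks to produce the $(\log K)^{2/3}$ factor, and the pointwise $\ell^3\subset\ell^2$ embedding combined with the square-function characterization \eqref{Lqlittle} to conclude. The only cosmetic difference is that the paper derives the Bernstein $L^3\to L^\infty$ step via Young's inequality using $u_k=\Pi_{4\cdot 2^k}u$, whereas you cite it directly.
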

\begin{proof}
 By using the Littlewood--Paley decomposition introduced in the previous section and the triangular  inequality, we have that
 $$ \|u \|_{L^\infty} \leq \sum_{ 2^k \leq 4K} \|u_{k}\|_{L^\infty}.$$
 Note that the sum is finite thanks to the assumption on the support of the Fourier transform of $u$.
  Next, since  $u_{k}=  \Pi_{ 4\cdot 2^k} u$, we get from Young's
   inequality for convolutions that
   $$   \|u_{k}\|_{L^\infty} \lesssim 2^k \|u^k \|_{L^3}.$$
    Therefore, by using the Bernstein inequality \eqref{bernstein}, we get that
   $$ \|u \|_{L^\infty} \lesssim \sum_{ 2^k \leq 4K} 2^k \|u_{k}\|_{L^3} \lesssim \|u\|_{L^3} +  \sum_{ 1 \leq 2^k \leq 4K}  \| \nabla u_{k} \|_{L^3}.$$
    Next, from H\"older's inequality and Fubini we get that
    $$  \sum_{ 1 \leq 2^k \leq 4K}  \| \nabla u_{k} \|_{L^3} \lesssim (\log K)^{ 2 \over 3}  \left( \sum_{ k \geq -1}  \| \nabla u_{k} \|_{L^3}^3\right)^{ 1 \over 3} \lesssim (\log K)^{ 2 \over 3} \left\| \left( \sum_{k \geq -1} | \nabla u_{k}|^3 \right)^{ 1 \over 3} \right\|_{L^3}.$$
     Since,  by the embedding of discrete $l^p$ spaces, we have that 
     $$   \left( \sum_{k \geq -1} | \nabla u_{k}|^3 \right)^{ 1 \over 3}  \lesssim  \left( \sum_{k \geq -1} | \nabla u_{k}|^2 \right)^{ 1 \over 2}, $$
    we finally obtain that 
    $$  \|u \|_{L^\infty} \lesssim  \|u\|_{L^3}+  (\log K)^{ 2 \over 3} \left\|\left( \sum_{k \geq -1} | \nabla u_{k}|^2 \right)^{ 1 \over 2} \right\|_{L^3}
     \lesssim (\log K)^{ 2 \over 3} \|u\|_{W^{1, 3}},$$
     where the final estimate comes from \eqref{Lqlittle}. 
  
\end{proof}


\begin{thebibliography}{99}

\bibitem{Bahouri-Chemin-Danchin}
{\rm H. Bahouri, J.-Y. Chemin, R. Danchin}. 
Fourier Analysis and Nonlinear Partial Differential Equations.
%Grundlehren der Mathematischen Wissenschaften [Fundamental Principles of Mathematical Sciences], 343. 
Springer, Heidelberg, 2011. 
%xvi+523 pp.

\bibitem{Bour93a}
{\rm J. Bourgain},
{\em Fourier transform restriction phenomena for certain lattice subsets and applications to nonlinear evolution equations. Part I: Schr\"odinger equations.} 
Geom. Funct. Anal. 3:209--262 (1993).

\bibitem{BeDe02}
{\rm C. Besse, B. Bid\'egaray, S. Descombes}, 
{\em  Order estimates in time of splitting methods for the nonlinear Schr\"odinger equation.} 
SIAM J. Numer. Anal. 40:26--40 (2002).

\bibitem{BS19}
{\rm S. Baumstark, K. Schratz},
{\em Uniformly accurate oscillatory integrators for the Klein--Gordon--Zakharov system from low- to high-plasma frequency regimes.}
 To appear in SIAM J. Numer. Anal.

\bibitem{BFS17}
{\rm S. Baumstark, E. Faou, K. Schratz},
{\em Uniformly accurate oscillatory  integrators for Klein--Gordon equations with asymptotic convergence to the classical NLS splitting.} 
Math. Comp. 87:1227--1254 (2018). 	
	
\bibitem{Burq-Gerard-Tzvetkov}
{\rm N. Burq, P. G\'erard, N. Tzvetkov.} 
{\em Strichartz inequalities and the nonlinear Schr\"odinger equation on compact manifolds.}
 Amer. J. Math. 126:569--605 (2004).

\bibitem{CanG15}
{\rm B. Cano, A. Gonz\'alez-Pach\'on},
{\em Exponential time integration of solitary waves of cubic Schr\"odinger equation.} 
Appl. Numer. Math. 91:26--45 (2015).

\bibitem{Cazenave}
{\rm T. Cazenave},  
Semilinear Schr\"odinger Equations. 
%Courant Lecture Notes in Mathematics,  10, 
American Math. Soc., Providence RI, 2003.
%xiv+323.

\bibitem{CCO08}
{\rm E. Celledoni, D. Cohen, B. Owren},
{\em Symmetric exponential integrators with an application to the cubic Schr\"odinger equation.} 
Found. Comput. Math. 8:303--317 (2008).

\bibitem{Choi}
{\rm W. Choi, Y. Koh},
{\em On the splitting method for the nonlinear Schr\"odinger equation with initial data in $H^1$.} 
Preprint (/arxiv.org/abs/1610.06028v2).

\bibitem{CoGa12}
{\rm D. Cohen, L. Gauckler},
{\em One-stage exponential integrators for nonlinear Schr\"odinger equations over long times.} 
BIT 52:877--903 (2012).

\bibitem{Duj09}
{\rm G. Dujardin},
{\em Exponential Runge--Kutta methods for the Schr\"odinger equation.}
Appl. Numer. Math. 59:1839--1857 (2009).

\bibitem{Eil16}
{\rm J. Eilinghoff, R. Schnaubelt, K. Schratz},
{\em Fractional error estimates of splitting schemes for the nonlinear Schr\"odinger equation}.
J. Math. Anal. Appl. 442:740-760 (2016).

\bibitem{Faou12}
{\rm E. Faou},
Geometric Numerical Integration and Schr\"odinger Equations.
European Math. Soc. Publishing House, Z\"urich, 2012.

\bibitem{GauLu}
{\rm L. Gauckler, C. Lubich},
{\em Nonlinear Schr\"odinger equations and their spectral semi-discretizations over long times.}
Found. Comput. Math. 20:141--169 (2010).

\bibitem{Gau11}
{\rm L. Gauckler},
{\em Convergence of a split-step {H}ermite method for the {G}ross--{P}itaevskii equation.} 
IMA J. Numer. Anal. 31:396--415 (2011).

\bibitem{Ginibre-Velo}
{\rm J. Ginibre, G. Velo}. {\em The global Cauchy problem for the nonlinear Schr\"{o}dinger equation revisited.}
 Ann. Inst. H. Poincar\'e Anal. Non Lin\'eaire  2: 309--327 (1985).

\bibitem{HochOst10}
{\rm M. Hochbruck, A. Ostermann},
{\em Exponential integrators.}  
Acta Numer. 19:209--286 (2010).

\bibitem{HS16}
{\rm M. Hofmanov\'a, K. Schratz},
{\em An oscillatory integrator for the KdV equation}, 
Numer. Math. 136:1117-1137 (2017).

\bibitem{Ignat11}
{\rm L. I. Ignat},
{\em A splitting method for the nonlinear Schr\"odinger equation.} 
J. Differential Equations 250:3022--3046 (2011).

\bibitem{IZ06}
{\rm L. Ignat, E. Zuazua},
{\em Dispersive properties of numerical schemes for nonlinear Schr\"odinger equations}. 
In: L.M.~Pardo, A. Pinkus, E. Suli, M.J. Todd (eds.), Foundations of Computational Mathematics Santander 2005.
Cambridge Univ. Press, Cambridge, 2006, pp. 181--207.

\bibitem{IZ09}
{\rm L. Ignat, E. Zuazua},
{\em Numerical dispersive schemes for the nonlinear Schr\"odinger equation}. 
SIAM J. Numer. Anal. 47:1366--1390 (2009).

\bibitem{JL00}
{\rm T. Jahnke, C. Lubich},
{\em Error bounds for exponential operator splittings.} 
BIT, 40:735--744 (2000).

\bibitem{Keel-Tao}
{\rm M. Keel, T.  Tao}. 
{\em Endpoint Strichartz estimates.}
Amer. J. Math. 120:955--980 (1998).

\bibitem{para}
{\rm  P. Kunstmann, B. Li, C. Lubich,}
{\em Runge--Kutta time discretization of nonlinear parabolic equations studied via discrete maximal parabolic regularity.}
To appear in Found. Comput. Math.

\bibitem{Linares}
{\rm  F. Linares, G. Ponce},  
Introduction to Nonlinear Dispersive Equations.
Second edition. Springer, New York, 2015. 
%xiv+301 pp.

\bibitem{Lubich08}
{\rm C. Lubich},
{\em On splitting methods for {S}chr\"odinger-{P}oisson and cubic nonlinear {S}chr\"odinger equations.} 
Math. Comp. 77:2141--2153 (2008).

\bibitem{OS18}
{\rm A. Ostermann, K. Schratz},
{\em Low regularity exponential-type integrators for semilinear Schr\"odinger equations}, 
Found. Comput. Math. 18:731--755 (2018).

\bibitem{SK05}
{\rm A. Stefanov, P. Kevrekidis},
{\em  Asymptotic behaviour of small solutions for the discrete nonlinear Schr\"odinger and Klein--Gordon equations}. 
Nonlinearity 18:1841--1857 (2005)

\bibitem{Strichartz}  
{\rm R. S. Strichartz}.  
{\em Restrictions of Fourier transforms to quadratic surfaces and decay of solutions of wave equations.}
Duke Math. J. 44:705--714 (1977). 

%\bibitem{Tao01}
%{\rm T. Tao},
%{\em Multilinear weighted convolution of $L^2$ functions, and applications to nonlinear dispersive equations.} Amer. J. Math, 123:839-908 (2001).

\bibitem{Tao06}
{\rm T. Tao},
Nonlinear Dispersive Equations. Local and Global Analysis.
Amer. Math. Soc., Providence RI, 2006.

\bibitem{Ta12}
{\rm M. Thalhammer},
{\em Convergence analysis of high-order time-splitting pseudo-spectral methods for nonlinear Schr\"odinger equations.} 
SIAM J. Numer. Anal. 50:3231--3258 (2012).

\end{thebibliography}
\end{document}